\definecolor{darkgreen}{rgb}{0.0625,0.64,0.0625}
\def\R{{\mathbb R}}
\theoremstyle{plain}
\newtheorem{thm}{Theorem}[section]
\newtheorem{lem}[thm]{Lemma}
\newtheorem{prop}[thm]{Proposition}
\newtheorem{cor}[thm]{Corollary}
\theoremstyle{definition}
\newtheorem{rem}[thm]{Remark}
\newtheorem{exmp}{Example}[section]
\newtheorem{defn}[thm]{Definition}
\def\eq#1{{\rm(\ref{#1})}}
\def\ro#1{{\rm #1}}
\def\Bbb#1{{\mathbb#1}}
\def\R{\Bbb R}
\def\Rx{\R\mkern1mu^}
\def\Rn{\Rx n}
\def\SA#1{\ro{SA}(#1)} 
\def\SAR#1{\SA{#1,\R}} 
\def\GL#1{\ro{GL}(#1)} 
\def\GLR#1{\GL{#1,\R}} 
\def\SL#1{\ro{SL}(#1)} 
\def\SLR#1{\SL{#1,\R}}  
\def\semidirect{\ltimes}
\def\ip#1#2{\langle \,{#1}\,,{#2}\,\rangle}
\numberwithin{equation}{section}
\begin{document}

\title[The maximal curves and heat flow in fully affine geometry]{The maximal curves and heat flow \\in fully affine geometry}

    \author[Y. Yang]{Yun Yang}
    \address{ Yun Yang\newline\indent
     Department of Mathematics, Northeastern University, Shenyang, 110819, P.R. China}
    \email{yangyun@mail.neu.edu.cn}

    \begin{abstract}
       In Euclidean geometry, the shortest distance between two points is a {\it straight line}.
       Chern made a conjecture (cf. \cite{che}) in 1977  that an affine maximal graph of a smooth,
       locally uniformly convex function on two dimensional Euclidean space $\R^2$ must be a {\it paraboloid}.
       In 2000, Trudinger and Wang completed the proof of this conjecture in affine geometry (cf. \cite{tw-1}).
      ({\it Caution: in these literatures,
      the term ``affine geometry'' refers to ``equi-affine geometry''}.)
      A natural problem arises: Whether the {\it hyperbola} is a fully affine maximal curve in $\R^2$?
      In this paper, by utilizing the evolution equations for curves, we obtain the second variational formula for fully affine extremal curves in $\R^2$,
      and show the fully affine maximal curves in $\R^2$ are much more abundant and include the explicit curves $y=x^\alpha ~\left(\alpha\;\text{is a constant and}\;\alpha\notin\{0,1,\frac{1}{2},2\}\right)$ and $y=x\log x$.
      At the same time, we generalize the fundamental theory of curves in higher dimensions, equipped with $\text{GA}(n)=\text{GL}(n)\ltimes\R^n$.
      Moreover, in fully affine plane geometry, an isoperimetric inequality is investigated, and a complete classification of the solitons for fully affine heat flow is provided.
      We also study the local existence, uniqueness, and long-term behavior of this fully affine heat flow.
      A closed embedded curve will converge to an ellipse when evolving according to the fully affine heat flow is proved.
    \end{abstract}

    \subjclass[2010]{53A15, 53A55, 53E40, 35K52.}
    \keywords{fully affine geometry, soliton, curvature flow,  differential invariant, extremal curve, isoperimetric inequality.}
\maketitle


\section{Introduction}
    Following the general spirit in the Erlangen program of Klein, fully affine differential geometry is based on the Lie group $\text{GA}(n,\R) = \GLR n \semidirect \Rn$
    consisting of affine transformations $x \longmapsto Ax+b$, $A\in \GLR n$, $b\in \Rn$ acting on $x \in \Rn$.
    In this geometry a key issue of study is the resulting invariants associated with submanifolds $M \subset \Rn$ (see Nomizu and Sasaki \cite{ns} and Simon \cite{sim} for details).
    Note that, in most of the literatures , ``affine geometry'' actually means ``equi-affine geometry'',
    in which one restricts to the subgroup $\SAR n = \SLR n \semidirect \Rn$ of volume-preserving affine transformations.
    In this paper, we restrict our attention to fully affine differential geometry. The main points of discussion and conclusions can be highlighted in four parts:
    (1) fully affine maximal curves, (2) fully affine isoperimetric inequality, (3) fully affine arc length in higher dimensions, (4)  heat flow in fully affine geometry.

\subsection{Fully affine maximal curves}
   Around 1977 Chern \cite{che} conjectured that an affine maximal graph of a smooth,
   locally uniformly convex function on two dimensional Euclidean space, $\R^2$, must be a paraboloid.
   Trudinger and Wang  \cite{tw-1} proved the validity of this conjecture in affine geometry and showed the corresponding result holds in higher dimensions.
   Furthermore, they studied the Plateau problem for affine maximal hypersurfaces,
   which is the affine invariant analogue of the classical Plateau problem for minimal
   surfaces \cite{tw-2}. Wang stated in \cite{wan} the concept of affine maximal surface in affine geometry corresponds to that of minimal
   surface in Euclidean geometry (Calabi \cite{cal} suggested using the terminology affine maximal as the
   second variation of the affine area functional is negative).
   The affine Bernstein problem and affine Plateau problem,
   as proposed in \cite{cal,cal-1,che}, are two fundamental problems for affine maximal submanifolds.

    Recently Kobayashi and Sasaki \cite{ks} investigated the first variational formula and found some extremal curves on plane with the fully affine group $\text{GA}(2)=\text{GL}(2)\ltimes\R^2$.
    However, the second variational formula is still a remaining problem.
    In the present paper, by employing the evolution equations of curve flow, we obtain the second variational formula
    and further study the local stability of  the extremal curves with respect to fully affine arc length (see Section \ref{sec-VF} for details).
    It is noteworthy that the fully affine maximal curves in $\R^2$ produce a more abundant category, which contains not only the explicit curves
    $\displaystyle y=x^\alpha$ ($\displaystyle\alpha\;\text{is a constant and}\;\alpha\notin\{0,1,\frac{1}{2},2\}$) and $\displaystyle y=x\log x$,
    but also these  implicit curves with the fully affine curvature $\displaystyle \varphi=\frac{3\sqrt{2}}{2}\tanh\left(\frac{\sqrt{2}}{3}\xi\right)$, $\displaystyle\varphi=\frac{3\sqrt{2}}{2}\coth\left(\frac{\sqrt{2}}{3}\xi\right)$,
    or $\displaystyle\varphi=\pm\frac{\sqrt{2}}{2}+\frac{9}{2\xi}$, where $\xi$ is the fully affine arc length parameter.

\subsection{Fully affine isoperimetric inequality}
    The solution to the classical isoperimetric problem can be represented in the form of an inequality which usually relates the length $L$ of a closed curve
    and the area $A$ of the planar region that it encloses. On Euclidean plane, the isoperimetric inequality is expressed by
    \begin{equation*}
    L^2\geq 4\pi A,
    \end{equation*}
    and that the equality holds if and only if the curve is a circle.
    In equi-affine geometry, the equi-affine isoperimetric inequality related all  ovals with an area $A$ and equi-affine perimeter $\bar{L}$ , is
    (cf. \cite{su})
    \begin{equation}\label{su-isoi}
       \bar{L}^3\leq8\pi^2A,
    \end{equation}
    and equality holds only for the ellipse.
    Note that the inequality sign flips, which is a little similar to the concept of affine maximal
    in affine geometry corresponding to that of minimal in
    Euclidean geometry.
    In addition, Sapiro and Tannenbaum \cite{st} presented an equi-affine isoperimetric inequality formed as
    \begin{equation}\label{st-isoi}
       2\oint\mu d\sigma\leq\frac{\bar{L}^2}{A},
    \end{equation}
    where $\mu$ is equi-affine curvature and $\sigma$ is equi-affine arc length parameter. It is worth mentioning that the area $A$ of the planar region that a curve encloses is variant under fully affine transformation, and one would feel uncertain whether there is an analogous formula in fully affine geometry.
    Inspired by the work of Gage \cite{gag} on the isoperimetric inequality with applications to curve shortening, and the work of Brendle \cite{bre} on the isoperimetric inequality for a minimal submanifold,
    we obtain, in the current paper, by employing the evolution process of equi-affine heat flow,
    a fully affine isoperimetric inequality (see Section \ref{sec-iso} for details)
    \begin{equation*}
       \oint d\xi\leq6\pi,
    \end{equation*}
    for any convex smooth embedded closed curve in $\R^2$, and equality holds only for the ellipse, where $\xi$ is fully affine arc length parameter
    and $\displaystyle\oint d\xi$ is the fully affine perimeter of the closed curve.
    In particular, the fully affine isoperimetric inequality plays a crucial role in the proof of Theorem \ref{thm-toelps}.

\subsection{Fully affine arc length in higher dimensions}
    Klein considered a geometry as being the study of a certain class for figures in a space of these properties which are left invariant with respect to same transitive group of transformations.
    The space is a homogeneous space $G/H$ with $G$ a Lie group, $H$ a closed subgroup, and with the group of transformations being the left action of $G$.
    The figures to be considered in this paper are smooth curves $[a,b]\stackrel{x}{\longrightarrow}G/H$.
    The natural objects of study are known classically as {\it differential invariants}, that is, local expressions in $x$ and its derivatives invariant under the action of $G$ (cf. \cite{gre}).
    They generalize arc-length, curvature and torsion for curves in Euclidean $\R^3$, which are differential invariants of orders $1,2$ and $3$ respectively.
    For the case of higher dimensional space $\R^n$, the arc length, curvatures (the invariants under reparametrization and transitive group of transformations) related to
    Euclidean group $\text{SE}(n)$, similarity group $\text{Sim}(n)$, centro-equi-affine group  $\text{SL}(n)$, centro-affine group $\text{GL}(n)$ and
    equi-affine group $\text{SA}(n)$ have been well developed (see Section \ref{invariant-group} for details).
    To the author's knowledge, the arc length and curvatures of curves in fully affine transformation group are only presented in two and three dimensional space \cite{ks}.
    In this paper, we extend the fully affine invariant theories of curves into the general dimensional space $\R^n$.
    It comes as a surprise to the author that, there is a sharp distinction in the expressions for fully affine arc length between different dimensional space (see the following examples and theorem for details),
    which is not as straightforward as these appearing in Section \ref{invariant-group}.
    Let us first observe the fully affine arc length element in $\R^2$ and $\R^3$ (the notation $\displaystyle x_{p^n}$ represents $\displaystyle \frac{d^nx}{dp^n}$ and $\displaystyle \bm{[}v_1,\cdots,v_n\bm{]}$ is denoted as a determinant form in $\R^n$, where the columns $v_1,\cdots,v_n$ are the  vectors in $\R^n$).
    \begin{exmp}\label{exm-f-r2}
    Consider the fully affine group $\text{GA}(2)=\text{GL}(2)\ltimes\R^2$ acting on $x:p\rightarrow x(p)\in\R^2$.
    Assume $x_p, \;x_{p^2}$ are linearly independent and $3\bm{[}x_p,x_{p^2}\bm{]}\bm{[}x_p,x_{p^4}\bm{]}-5\bm{[}x_p,x_{p^3}\bm{]}^2+12\bm{[}x_p,x_{p^2}\bm{]}\bm{[}x_{p^2},x_{p^3}\bm{]}\neq0$.
    Then the corresponding function
    \begin{equation*}
    f(p)=\sqrt{\epsilon\frac{3\bm{[}x_p,x_{p^2}\bm{]}\bm{[}x_p,x_{p^4}\bm{]}-5\bm{[}x_p,x_{p^3}\bm{]}^2+12\bm{[}x_p,x_{p^2}\bm{]}\bm{[}x_{p^2},x_{p^3}\bm{]}}{\bm{[}x_p,x_{p^2}\bm{]}^2}}
    \end{equation*}
    satisfies the conditions in Section \ref{sec-arc},
    and the fully affine arc length element for plane curves in $\R^2$ is given by
    \begin{equation*}
       ds=\sqrt{\epsilon\frac{3\bm{[}x_p,x_{p^2}\bm{]}\bm{[}x_p,x_{p^4}\bm{]}-5\bm{[}x_p,x_{p^3}\bm{]}^2+12\bm{[}x_p,x_{p^2}\bm{]}\bm{[}x_{p^2},x_{p^3}\bm{]}}{\bm{[}x_p,x_{p^2}\bm{]}^2}}dp,
    \end{equation*}
    where $\displaystyle \epsilon=\text{sgn}\left(3\bm{[}x_p,x_{p^2}\bm{]}\bm{[}x_p,x_{p^4}\bm{]}-5\bm{[}x_p,x_{p^3}\bm{]}^2+12\bm{[}x_p,x_{p^2}\bm{]}\bm{[}x_{p^2},x_{p^3}\bm{]}\right).$
    \end{exmp}
    \begin{exmp}\label{exm-f-r3}
     Consider the fully affine group $\text{GA}(3)=\text{GL}(3)\ltimes\R^3$ acting on $x:p\rightarrow x(p)\in\R^3$.
    Assume $x_p,x_{p^2}$ and $x_{p^3}$ are linearly independent, and
    $A-B+C\neq0,$
    where $A=24\bm{[}x_p,x_{p^2},x_{p^5}\bm{]}\bm{[}x_p,x_{p^2},x_{p^3}\bm{]}$, $B=35\bm{[}x_p,x_{p^2},x_{p^4}\bm{]}^2$ and $C=60\bm{[}x_p,x_{p^3},x_{p^4}\bm{]}\bm{[}x_p,x_{p^2},x_{p^3}\bm{]}$.
    One can verify that
     the fully affine arc length element in $\R^3$ can be defined as
    $ds=\sqrt{\epsilon\frac{A-B+C}{\bm{[}x_p,x_{p^2},x_{p^3}\bm{]}^2}}dp,$
    where
    $\displaystyle \epsilon=\text{sgn}\left(A-B+C\right).$
    \end{exmp}
     According to Example \ref{exm-f-r2} and Example \ref{exm-f-r3}, we will prove the following relation under the assumption that $x_p,x_{p^2},\cdots, x_{p^n}$ are linearly independent
     for the purpose of obtaining the fully affine arc length element of the curve $x:p\rightarrow x(p)\in\R^n$ (see Section \ref{affine-high} for details).
    \begin{thm}\label{thm-higher}
    Under the reparametrization $r=r(p)$, we have
    \begin{align*}
     &\alpha\frac{\bm{[}x_{p^{n+2}},x_{p^{n-1}},\cdots,x_{p^2},x_p\bm{]}}{\bm{[}x_{p^n},x_{p^{n-1}},\cdots,x_{p^2},x_p\bm{]}}
     -\beta\left(\frac{\bm{[}x_{p^{n+1}},x_{p^{n-1}},\cdots,x_{p^2},x_p\bm{]}}{\bm{[}x_{p^n},x_{p^{n-1}},\cdots,x_{p^2},x_p\bm{]}}\right)^2\\
     &\qquad\qquad\qquad\qquad\qquad\qquad\qquad\;+\gamma\frac{\bm{[}x_{p^{n+1}},x_{p^n},x_{p^{n-2}},\cdots,x_{p^2},x_p\bm{]}}{\bm{[}x_{p^n},x_{p^{n-1}},\cdots,x_{p^2},x_p\bm{]}}\\
     &=\left(\frac{dr}{dp}\right)^2
     \Bigg(\alpha\frac{\bm{[}x_{r^{n+2}},x_{r^{n-1}},\cdots,x_{r^2},x_r\bm{]}}{\bm{[}x_{r^n},x_{r^{n-1}},\cdots,x_{r^2},x_r\bm{]}}
     -\beta\left(\frac{\bm{[}x_{r^{n+1}},x_{r^{n-1}},\cdots,x_{r^2},x_r\bm{]}}{\bm{[}x_{r^n},x_{r^{n-1}},\cdots,x_{r^2},x_r\bm{]}}\right)^2\\
     &\qquad\qquad\qquad\qquad\qquad\qquad\qquad\;+\gamma\frac{\bm{[}x_{r^{n+1}},x_{r^n},x_{r^{n-2}},\cdots,x_{r^2},x_r\bm{]}}{\bm{[}x_{r^n},x_{r^{n-1}},\cdots,x_{r^2},x_r\bm{]}}\Bigg),
    \end{align*}
    where $\alpha=\frac{n(n+1)(n-1)}{\omega}$, $\beta=\frac{(n-1)(n+2)(2n+1)}{2\omega}$, $\gamma=\frac{n(n+1)(n+2)}{\omega}$, and $\omega$ is the greatest common divisor of
    $n(n+1)(n-1), \frac{(n-1)(n+2)(2n+1)}{2}$ and $n(n+1)(n+2)$.
    \end{thm}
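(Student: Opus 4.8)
The plan is to reduce the claimed transformation law to a finite algebraic check by carrying out the chain rule explicitly and then exploiting the triangular structure of the determinants. Write $a=r_p=\tfrac{dr}{dp}$, so that the derivation operators are related by $\tfrac{d}{dr}=a^{-1}\tfrac{d}{dp}$. Iterating this relation yields an expansion $x_{r^k}=\sum_{j=1}^{k}c^{(k)}_j\,x_{p^j}$, and only the top three coefficients enter the determinants in question. First I would establish, by induction on $k$, that
\[
c^{(k)}_k=a^{-k},\qquad c^{(k)}_{k-1}=-\binom{k}{2}a^{-k-1}r_{p^2},\qquad c^{(k)}_{k-2}=3\binom{k+1}{4}a^{-k-2}r_{p^2}^2-\binom{k}{3}a^{-k-1}r_{p^3};
\]
the sub-subleading coefficient follows from the recursion $c^{(k)}_{k-2}=a^{-1}\big(k\binom{k-1}{2}a^{-k-1}r_{p^2}^2-\binom{k-1}{2}a^{-k}r_{p^3}+c^{(k-1)}_{k-3}\big)$ together with the hockey-stick identity, which gives the closed form $C_k:=3\binom{k+1}{4}$ for the coefficient of $a^{-k-2}r_{p^2}^2$.

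The second step is to express every determinant on the $r$-side through the four base determinants built from the $p$-derivatives, namely $W=\bm{[}x_{p^n},\dots,x_p\bm{]}$ (nonzero by the linear-independence hypothesis) and the three numerators $I_1W,I_2W,I_3W$ of the ratios in the statement. Because the change of basis from $(x_p,\dots,x_{p^k})$ to $(x_r,\dots,x_{r^k})$ is triangular, column reduction annihilates every sub-diagonal contribution lying in the span of the lower-order columns actually present; thus $\bm{[}x_{r^n},\dots,x_r\bm{]}=a^{-n(n+1)/2}W$, while in each numerator only the components of the highest derivative(s) along $x_{p^{n+2}},x_{p^{n+1}},x_{p^n},x_{p^{n-1}}$ survive. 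Substituting the coefficients from Step~1 and using multilinearity together with the vanishing of determinants with repeated columns, I expect to obtain, writing $I_1^r,I_2^r,I_3^r$ for the three $r$-ratios,
\begin{align*}
aI_2^r&=I_2-\binom{n+1}{2}a^{-1}r_{p^2},\\
a^2I_1^r&=I_1-\binom{n+2}{2}a^{-1}r_{p^2}\,I_2+C_{n+2}\,a^{-2}r_{p^2}^2-\binom{n+2}{3}a^{-1}r_{p^3},\\
a^2I_3^r&=I_3-\binom{n}{2}a^{-1}r_{p^2}\,I_2+\Big(\binom{n+1}{2}\binom{n}{2}-C_{n+1}\Big)a^{-2}r_{p^2}^2+\binom{n+1}{3}a^{-1}r_{p^3}.
\end{align*}

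With these in hand, the third step is to insert them into $a^2\big(\alpha I_1^r-\beta (I_2^r)^2+\gamma I_3^r\big)$ and collect terms. The leading pieces reproduce $\alpha I_1-\beta I_2^2+\gamma I_3$ exactly, so the desired identity $\Phi_p=a^2\Phi_r$ becomes equivalent to the vanishing of the three ``correction'' coefficients, namely those of $a^{-1}r_{p^2}\,I_2$, of $a^{-2}r_{p^2}^2$, and of $a^{-1}r_{p^3}$:
\begin{align*}
&-\alpha\binom{n+2}{2}+2\beta\binom{n+1}{2}-\gamma\binom{n}{2}=0,\\
&\alpha C_{n+2}-\beta\binom{n+1}{2}^2+\gamma\Big(\binom{n+1}{2}\binom{n}{2}-C_{n+1}\Big)=0,\\
&-\alpha\binom{n+2}{3}+\gamma\binom{n+1}{3}=0.
\end{align*}
Each is a short polynomial identity in $n$: substituting $\alpha=\tfrac{n(n+1)(n-1)}{\omega}$, $\beta=\tfrac{(n-1)(n+2)(2n+1)}{2\omega}$, $\gamma=\tfrac{n(n+1)(n+2)}{\omega}$ and factoring, the first collapses to $-(n+1)+(2n+1)-n=0$, the third to a cancellation of two identical products, and the second (after inserting $C_k=3\binom{k+1}{4}$) to $(n+3)-(2n+1)+(n-2)=0$. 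Indeed this is precisely the computation that forces the stated values of $\alpha,\beta,\gamma$ up to the common normalization $\omega$.

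The main obstacle lies in Steps~1 and~2: correctly pinning down the sub-subleading coefficient $c^{(k)}_{k-2}$, and tracking exactly which $x_{p^j}$-components of the highest derivatives persist through the triangular column reduction in each numerator. The most delicate point is the determinant $\bm{[}x_{r^{n+1}},x_{r^n},x_{r^{n-2}},\dots,x_r\bm{]}$, where the index $n-1$ is skipped: here the component of $x_{r^{n+1}}$ along $x_{p^{n-1}}$ (governed by $c^{(n+1)}_{n-1}$) pairs, after one column transposition, with the $x_{p^n}$-part of $x_{r^n}$ to produce a surviving multiple of $W$; this is exactly why $C_{n+1}$ appears in the second identity. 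Once the reduction is organized so that every $r$-determinant is written through $W$ and the three $p$-numerators, the remainder is the finite algebra displayed above.
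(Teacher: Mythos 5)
Your proposal is correct, and it is essentially the paper's own argument run in the reverse direction. The paper (Lemma \ref{lem-rp}) expands the $p$-derivatives in the $r$-basis, $x_{p^k}=x_{r^k}\left(\tfrac{dr}{dp}\right)^k+\binom{k}{2}\left(\tfrac{dr}{dp}\right)^{k-2}\tfrac{d^2r}{dp^2}\,x_{r^{k-1}}+\bigl(A_k(\cdot)+B_k(\cdot)\bigr)x_{r^{k-2}}+\cdots$ with $A_k=\frac{k(k-1)(k-2)(k-3)}{8}$, $B_k=\frac{k(k-1)(k-2)}{6}$, and then writes the three $p$-ratios in terms of the $r$-ratios; you expand $x_{r^k}$ in the $p$-basis instead, which is why your sub-subleading constant is $C_k=3\binom{k+1}{4}=A_{k+1}$ rather than $A_k$ (your recursion, its hockey-stick evaluation, and your three reduction identities for $aI_2^r$, $a^2I_1^r$, $a^2I_3^r$ all check out). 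As a consequence your second correction equation, $\alpha C_{n+2}-\beta\binom{n+1}{2}^2+\gamma\bigl(\binom{n+1}{2}\binom{n}{2}-C_{n+1}\bigr)=0$, is literally different from the third equation of the paper's Lemma \ref{lem-solP}, which has $A_{n+2}$ and $A_{n+1}$ in those positions; this is not an inconsistency but the expected asymmetry of inverting the change of parameter, and both brackets indeed vanish for the stated $\alpha,\beta,\gamma$: the paper's reduces to $(n-1)-(2n+1)+2n-(n-2)=0$ and yours to $(n+3)-(2n+1)+2n-(n+2)=0$. Your other two conditions coincide with the paper's remaining equations, since $B_k=\binom{k}{3}$. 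Neither direction buys anything substantive over the other; yours has the mild advantage that every determinant is ultimately written over the single nonvanishing denominator $\bm{[}x_{p^n},\cdots,x_p\bm{]}$ on the $p$-side, but the bookkeeping, and the final finite verification forcing $\alpha,\beta,\gamma$ up to the normalization $\omega$, are the same.
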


\subsection{Heat flow in fully affine geometry}
    The term ``invariant submanifold flow'' is conceived of as the motion of one submanifold governed by a prescribed partial differential equation that admits an underlying transformation group as a symmetry group, e.g.~the Euclidean group of rigid motions (translations and rotations). There have been a number of achievements contributing to invariant geometric flows for curves and surfaces in Euclidean geometry and in affine geometry, specially about geometric heat flow. The curve shortening flow (CSF) is one of the simplest and well-studied models, which was introduced firstly by Mullins \cite{mul} as a model for the motion of grain boundaries. Later Gage and Hamilton \cite{gh} proved that a convex curve embedded in ${\mathbb R}^2$ shrinks to a point  when evolving under CSF.  In the sequel, the evolution of non-convex embedded curves was studied by Grayson \cite{gra-1,gra-2}, and it was proved that if the initial curve is any embedded curve in ${\mathbb R}^2$, then the corresponding curve first becomes convex and finally shrinks to a point in finite time while becoming asymptotically circular, often referred to as a ``circular point''. The higher dimensional analogue of CSF is the mean curvature flow (MCF), which was first investigated by Brakke \cite{bra}, in the context of geometric measure theory. Huisken \cite{hui-84} proved a smooth, compact and convex hypersurface without boundary converge to a round sphere after appropriate rescaling when evolving under MCF. The Ricci Flow (RF) was introduced by Hamilton \cite{ham-82}, which deforms an initial metric in the direction of its Ricci tensor. MCF shares many characteristics with CSF, and they are called geometric heat flows. Geometric heat flow means that we consider a family of submanifolds
    $F:M\times(0,T)\rightarrow N$ which solves the partial differential equation
    \begin{equation*}
       \frac{\partial}{\partial t}F=\Delta F,
    \end{equation*}
     with initial condition $F(\cdot, 0)=F_0:M\rightarrow N$, where $\Delta$ is the Laplace-Beltrami operator on $(M,g)$, $g$ denotes the metric on $M$.
     The Ricci flow is the analogue of the heat equation on a Riemannian manifold \cite{ds},  which was  extended by Perelman to in his famous solution to the Poincar\'e conjecture. In 2003, Perelman \cite{pre-1,pre-2,pre-3} completed Hamilton's Ricci flow programme \cite{ham} with the
     aim of settling Thurston's geometrization conjecture \cite{thu} for closed 3-manifolds.
     This conjecture had predicted such manifolds to be decomposable into pieces with
     locally homogeneous geometry.

       There are some analogical extensions of the geometric heat flow in affine geometry. The corresponding affine curve shortening flow (ACSF) was firstly introduced by  Sapiro, and Tannenbaum \cite{st}, and was further investigated by Angenent, Sapiro, and Tannenbaum \cite{ast}. It was shown  that any convex smooth embedded curve when evolves according to ACSF converges to an elliptical point \cite{ast,cz}.  Andrews \cite{and} studied an affine-geometric, fourth-order parabolic evolution equation
       for closed convex curves in the plane and proved the
        evolving curve remains strictly convex while expanding to infinite size and approaching a
        homothetically expanding ellipse.  More recently, similar results were obtained for the heat flow in  centro-equi-affine geometry \cite{wwq} and in centro-affine geometry \cite{oqy}. Interestingly,  the heat flow for the centro-affine curvature  is equivalent to the well-known inviscid Burgers' equation. In contrast with the heat flows in Euclidean, equi-affine, and centro-equi-affine geometries, those yield second order nonlinear  parabolic equations for the associated  invariant curvature. Heat flows in more general Klein geometries were investigated \cite{olv-isf, ost-1}.

      Some special solutions are vital for the detailed analysis of geometric flow. For instance, the grim-reapers were used by Grayson \cite{gra-3}, Altschuler \cite{alt} and Hamilton \cite{ham} for examining the behaviour of CSF.  The self-similar solutions and spiral wave solutions of CSF were served to study the formation of singularity of CSF \cite{ast, ham}. The Abresch-Langer curves were classified in \cite{al}, and the group-invariant solutions for CSF were classified in \cite{cl}.
     The notion of solitons of the flow, which means the surprising emergence of special evolutions which retain a fixed profile for all time, is intimately
     connected with the set of possible singularities. The classification of solitons to geometric flow is a central problem with important implications for the analysis of singularities.

     Thus another part of this paper is to study fully affine heat flow, which is a fully affine analogue of CSF and MCF in Euclidean geometry, and ACSF in equi-affine geometry. We find  the heat flow for the  fully affine curvature generates a fourth order nonlinear  parabolic equation. A complete classification of solitons for  this heat flow is provided. Moreover, we investigate the local existence, uniqueness, and long-term behavior of this fully affine heat flow.
       Chou in \cite{cho} used energy method to study the fourth order nonlinear  parabolic equation on Euclidean plane. By utilizing the similar technique and fully affine isoperimetric inequality we show that a closed embedded curve may converge to an ellipse when evolving according to the fully affine heat flow.
      The main results of this part are the followings:
     \begin{thm}\label{thm-slt}
      Under fully affine heat flow \eqref{heat-evo}, the expanding, translating and shrinking solitons are composed of
      \begin{itemize}
        \item[(1)] the curves with constant fully affine curvature;
        \item[(2)] the curves with the fully affine curvature $\displaystyle \varphi=-\frac{3}{\xi}$,
      $\displaystyle \varphi=A\tan\left(\frac{A}{3}\xi\right)$, $\displaystyle \varphi=-A\cot\left(\frac{A}{3}\xi\right)$, $\displaystyle \varphi=-A\tanh\left(\frac{A}{3}\xi\right)$ or $\displaystyle \varphi=-A\coth\left(\frac{A}{3}\xi\right)$,
      where $\xi$ is the fully affine arc length parameter and $A$ is an arbitrary nonzero constant.
      \end{itemize}
      In particular, the closed solitons for \eqref{heat-evo} are the ellipses.
    \end{thm}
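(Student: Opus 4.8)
The plan is to convert the geometric soliton condition into a single ordinary differential equation for the fully affine curvature $\varphi$, integrate that equation explicitly to recover the list in the statement, and then isolate the closed examples.

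First I would record the two evolution equations induced by the fully affine heat flow \eqref{heat-evo}: the evolution $\partial_t(d\xi)=\psi\,d\xi$ of the fully affine arc-length element and the fourth-order parabolic equation $\partial_t\varphi=\mathcal P[\varphi]$ for the curvature, together with the commutation rule $[\partial_t,\partial_\xi]=-\psi\,\partial_\xi$. A soliton is, by definition, a solution whose shape is preserved under the flow up to the ambient group, so that the normal velocity agrees with the normal part of an infinitesimal fully affine motion: there are a fixed $L\in\mathfrak{gl}(2)$ and $b\in\R^2$ with $\partial_t x=(Lx+b)+f\,x_\xi$ for some tangential function $f$. The three named types are exactly the geometrically distinguished self-similar reductions — translating ($L=0$, $b\neq0$), and expanding versus shrinking (the two signs of the dilation part $L=\mu I$) — so they can be handled simultaneously by carrying the constants $L,b$ through the computation.

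Next I would substitute this self-similar ansatz into the curvature evolution. Expanding $Lx+b$ in the fully affine Frenet frame and using the structure equations to differentiate away the explicit occurrences of the position vector $x$ and of the constant $b$, the fourth-order flow collapses to an ordinary differential equation in $\xi$. The decisive point — and this is where the real work lies — is that the resulting equation is an exact $\xi$-derivative twice over: integrating twice (using that $\mathcal P$ is a total $\xi$-derivative of lower-order invariants) reduces it to the first-order Riccati equation
\begin{equation*}
3\,\varphi_\xi=\varphi^2+C,\qquad C\in\R,
\end{equation*}
equivalently the second-order equation $3\varphi_{\xi\xi}=2\varphi\varphi_\xi$, whose full solution set consists of all constants together with the non-constant profiles below. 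The main obstacle is precisely this collapse: one must produce the correct integrating factors (conserved quantities of the operator $\mathcal P$) so that the soliton ODE drops from order four to order one, and must check that no extra terms survive for any of the three soliton types.

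Solving $3\varphi_\xi=\varphi^2+C$ is then routine and accounts for every curve in the statement: the equilibria give the constant-curvature curves of (1); $C=0$ gives $\varphi=-3/\xi$; $C=A^2>0$ gives $\varphi=A\tan\!\left(\tfrac A3\xi\right)$ and, after a quarter-period shift $\tan\to-\cot$, $\varphi=-A\cot\!\left(\tfrac A3\xi\right)$; and $C=-A^2<0$ gives the two non-equilibrium solutions $\varphi=-A\tanh\!\left(\tfrac A3\xi\right)$ and $\varphi=-A\coth\!\left(\tfrac A3\xi\right)$. Finally, for the closed case I would argue that the curvature of a smooth closed curve is a \emph{periodic} function of $\xi$; among the non-constant solutions just found, those of $\tan,\cot,\coth$ and $-3/\xi$ type have poles and $\tanh$ is strictly monotone, so none is periodic, whence a closed soliton must have constant $\varphi$. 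For constant $\varphi$ the fully affine Frenet system is linear with constant coefficients, and demanding a closed embedded integral curve forces its characteristic exponents to be purely imaginary and commensurate; the only such curve is the ellipse. (Alternatively, the equality case of the fully affine isoperimetric inequality $\oint d\xi\le 6\pi$ of Section \ref{sec-iso} pins down the ellipse directly.) This yields the asserted classification, with the ellipses as the only closed solitons.
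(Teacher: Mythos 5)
You end at the correct Riccati equation $3\varphi_\xi=\varphi^2+C$, your solution list is exactly right, and your periodicity argument for closed curves is sound as far as it goes; but your proposal has a genuine gap precisely where you flag ``the real work'': the collapse of the soliton condition to that first-order equation is asserted, never performed. Moreover, as described the mechanism is not the right one — substituting the self-similar ansatz into the fourth-order curvature evolution and ``integrating twice'' cannot by itself produce a first-order equation (order four would require three integrations, and you would then have to identify the resulting constants of integration in terms of the group data $(L,b)$, which you never do). The paper never touches the fourth-order operator for this direction. Following \cite{lt}, an expanding/shrinking (resp.\ translating) soliton satisfies the pointwise identity $C_{\xi^2}=a(C-P)+f(\xi)C_\xi$ (resp.\ $C_{\xi^2}=A+f(\xi)C_\xi$); differentiating once in $\xi$ and comparing coefficients in the frame $\{C_\xi,C_{\xi^2}\}$ with the Frenet relation \eqref{c3xi} forces $f=-\varphi$ and $a+f_\xi=-\lambda$, and inserting \eqref{lam_exp} yields at once $2\varphi^2-6\varphi_\xi+9a+\epsilon=0$, i.e.\ your Riccati equation with $C=(9a+\epsilon)/2$. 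So the reduction you treat as the main obstacle is a two-line computation in the moving frame. The heavy calculation you envision does appear in the paper, but only as the converse verification: substituting $U=1$, $W=\varphi$ into \eqref{gtog} and \eqref{vaphi-evo} and imposing $g_t\equiv 0$ gives $\varphi_t=-\tfrac{4}{3}\bigl(\varphi_{\xi^2}-\tfrac{2}{3}\varphi\varphi_\xi\bigr)$, which certifies that every curvature profile on your list genuinely is a soliton — a direction your classification also needs but does not address.

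On the closed case: your argument (non-constant profiles are never periodic; constant $\varphi$ plus closedness forces purely imaginary characteristic roots of the constant-coefficient Frenet system, hence $\varphi=0$, $\epsilon=1$, an ellipse) is correct, but it covers only solitons of the three named types, since it leans on the Riccati classification. The paper instead integrates the metric evolution over the closed curve: for $U=1$ and arbitrary tangential velocity $W$, one has $\oint_C (g_t/g)\,d\xi=\tfrac{1}{3}\oint_C\varphi_\xi^2\,d\xi$, so shape preservation forces $\varphi_\xi\equiv 0$ outright. That identity requires no classification and excludes every closed non-elliptical soliton regardless of which symmetry it evolves along, which is the cleaner justification of the theorem's final clause.
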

     \begin{thm}\label{thm-toelps}
    Assume $C(\cdot,t)$ is a  solution of fully affine heat flow \eqref{heat-evo} in a maximal interval $[0,\omega),\;\omega\leq\infty$, where $C_0$ is a closed smooth embedded curve. Then the solution exists as long as the $L^2$-norm of the curvature $\varphi$ of $C(\cdot,t)$ is finite. Furthermore, when $\omega$ is finite,
    \begin{equation*}
      \oint_{C(\cdot,t)}\varphi^2d\xi\geq D(\omega-t)^{-1/4}
    \end{equation*}
    for some constant $D$. When $\omega$ is infinity,
    the curvature $\varphi$ of $C(\cdot,t)$ converges smoothly to zero, that is, $C(\cdot,t)$ converges to an ellipse.
    \end{thm}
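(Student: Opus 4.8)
The plan is to adapt the energy method of Chou \cite{cho} for fourth-order parabolic equations to the fully affine flow \eqref{heat-evo}. Local existence and uniqueness on a maximal interval $[0,\omega)$ for a smooth closed embedded initial curve follow from standard quasilinear parabolic theory, once \eqref{heat-evo} is recast as a fourth-order parabolic equation $\partial_t\varphi=-\partial_\xi^4\varphi+(\text{lower order})$ for the fully affine curvature. The entire argument then rests on the hierarchy of energies
\begin{equation*}
   E_k(t)=\oint_{C(\cdot,t)}\left(\partial_\xi^{k}\varphi\right)^2 d\xi,\qquad k=0,1,2,\dots,
\end{equation*}
with $\partial_\xi$ the derivative with respect to fully affine arc length. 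Because \eqref{heat-evo} is fourth order it admits no maximum principle, so \emph{every} a priori bound must be extracted from these integral norms; this is the structural feature that makes the interpolation estimates, rather than any comparison argument, the crux.

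First I would prove the continuation criterion (1). Differentiating $E_k$ in $t$, commuting $\partial_t$ past $\partial_\xi$ while accounting for the evolution of the measure $d\xi$, yields a relation of the schematic form
\begin{equation*}
   \frac{d}{dt}E_k \le -2\oint\left(\partial_\xi^{k+2}\varphi\right)^2 d\xi + \oint P_k\!\left(\varphi,\partial_\xi\varphi,\dots\right) d\xi,
\end{equation*}
where the first term is the fourth-order dissipation and $P_k$ is a polynomial in $\varphi$ and its derivatives whose weight matches the parabolic scaling. The Gagliardo--Nirenberg interpolation inequalities on the closed curve $C(\cdot,t)$ let one absorb the lower-order terms $\oint P_k\,d\xi$ into the dissipation at the price of a power of $E_0$; induction on $k$ then shows that $\sup_{[0,T]}E_0<\infty$ forces $\sup_{[0,T]}E_k<\infty$ for all $k$. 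Since control of all $E_k$ is equivalent to $C^\infty$ control of the embedding, the flow extends beyond $T$ whenever $E_0$ remains finite, which is exactly (1).

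For (2), suppose $\omega<\infty$. Specializing the above to $E_0$ and interpolating gives a closed inequality
\begin{equation*}
   \frac{d}{dt}E_0 \le C\,E_0^{5},
\end{equation*}
where $C$ depends only on the fully affine perimeter, kept uniform by the isoperimetric bound $\oint d\xi\le 6\pi$ of Section \ref{sec-iso}. Maximality of the finite $\omega$ together with (1) forces $E_0\to\infty$ as $t\to\omega$; separating variables and integrating from $t$ to $\omega$ gives $\tfrac14\,E_0(t)^{-4}\le C(\omega-t)$, i.e. $E_0(t)\ge D(\omega-t)^{-1/4}$ with $D=(4C)^{-1/4}$, which is (2).

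Finally, for $\omega=\infty$ I would run the standard gradient-flow convergence scheme. The fully affine perimeter $L(t)=\oint d\xi$ is monotone under \eqref{heat-evo} and bounded above by $6\pi$, hence convergent; its time-derivative equals, up to sign, a dissipation integral controlling $\oint\varphi^2\,d\xi$, so $\int_0^\infty E_0(t)\,dt<\infty$. Combined with the uniform higher-order bounds of the first step (which rule out concentration) and a Wirtinger-type inequality relating the dissipation to $E_0$ on the closed curve, this forces $E_0(t)\to 0$; the interpolation hierarchy then upgrades this to smooth decay $\partial_\xi^k\varphi\to 0$ for every $k$. Since a closed curve with $\varphi\equiv 0$ is precisely an ellipse --- the unique closed soliton of Theorem \ref{thm-slt} and the equality case of the isoperimetric inequality --- the curve converges to an ellipse, establishing (3). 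I expect the main obstacle to be twofold: sharpening the interpolation so that the nonlinear terms $P_k$ are genuinely dominated by the single dissipative term, and ensuring the evolving curve remains in the convex, embedded class for which the isoperimetric inequality and the perimeter monotonicity are valid --- a property that, unlike for second-order flows, is delicate for a fourth-order equation lacking a maximum principle.
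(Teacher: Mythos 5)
Your proposal is correct and follows essentially the same route as the paper's proof: Chou's energy method, with the hierarchy $E_k=\oint(\partial_\xi^k\varphi)^2d\xi$, Gagliardo--Nirenberg interpolation absorbed into the fourth-order dissipation, the ODE inequality $\frac{d}{dt}E_0\le C(E_0+E_0^5)$ integrated near $\omega$ to give the $(\omega-t)^{-1/4}$ rate, and, for $\omega=\infty$, the perimeter monotonicity $\frac{dL}{dt}=\frac{1}{3}\oint\varphi_\xi^2d\xi$ combined with the isoperimetric bound $L\le 6\pi$ and Wirtinger's inequality to propagate smallness of $E_0$ and conclude smooth convergence to an ellipse. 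The only notable technical difference is the local-existence step, which the paper runs through the equi-affine curvature $\mu$ and the Giga--Ito theorem (exploiting that the flow fixes the equi-affine parametrization, $\bar g_t=0$, so the curvature equation lives on a fixed domain), rather than directly through a quasilinear parabolic equation for $\varphi$ in the moving arc length $\xi$.
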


\subsection{Organization of the paper}
     This paper is organized as follows. In Section 2, we recall the relevant definitions, notions and basic facts for curves related to
     Euclidean group $\text{SE}(n)$, similarity group $\text{Sim}(n)$, centro-equi-affine group  $\text{SL}(n)$, centro-affine group $\text{GL}(n)$ and equi-affine group $\text{SA}(n)$. In Section 3, we study fully affine differential invariants in $\R^n$ based on the Lie group $\text{GA}(n) = \text{GL}(n) \semidirect \Rn$.  In Section 4, under the motions of planar curve, the evolution formulas of fully affine differential invariants are derived. In Section 5,  we obtain the second variation formula of planar curves in fully affine space $\R^2$ and determine the stability of the extremal curves. In Section 6, the fully affine isoperimetric inequality is investigated.  In Section 7, we provide a complete classification of solitons for
     fully affine heat flow, and the local existence, uniqueness, and long-term behavior of this heat flow are discussed. In Appendix, we derive the motions of curves in equi-affine setting, and list a theorem in \cite{gi} of local existence for a fourth-order parabolic equation.

\subsection{Acknowledgements} Y. Yang was supported by the Fundamental Research Funds for the Central Universities under grant-N2104007, and he would also like to express his deep gratitude to Professor Peter J. Olver for his encouragement and help during
his stay in School of Mathematics, University of Minnesota as a Visiting Professor, while part of this work was completed.
This paper is dedicated to  Professor Peter J. Olver on the occasion of his 70th birthday.

\section{Arc length and differential invariants related to group}\label{invariant-group}

     Fully affine differential geometry is based on the Lie group $\text{GA}(n) = \text{GL}(n) \ltimes \R^n$ consisting of affine transformations $x \longmapsto Ax+b$, $A\in \text{GL}(n)$, $b\in \Rn$ acting on $x \in \Rn$.
     Before discussing the fully affine geometry, let us make a digression to review some notions and
     basic facts in its sub-geometries (Euclidean geometry, similarity geometry, centro-equi-affine geometry, centro-affine geometry and equi-affine geometry).
 \subsection{Group-invariant arc length element}\label{sec-arc}
     Assume throughout this section that all of our mappings
     are sufficiently smooth, so that all the relevant derivatives are well-defined.
    In the following, $\displaystyle \bm{[}v_1,\cdots,v_n\bm{]}$ is denoted as a determinant form in $\R^n$, where the columns $v_1,\cdots,v_n$ are the  vectors in $\R^n$.
    $v\cdot w$ or $\ip{v}{w}$ represents the inner product of vectors $v,w\in\R^n$,  $\displaystyle \|v\|:=\sqrt{\ip{v}{v}}$, and $\displaystyle x_{p^n}:=\frac{d^nx}{dp^n}$.

     Let $G$ be a Lie group acting on a curve $x:p\rightarrow x(p)\in\R^n$.
     Note that the order of a differential invariant or a function is the order of the highest
     derivative that occurs in the local expression for it.
     The arc length element with respect to $G$ can be determined by finding a differential $1$-form $f(p)dp$,
    whose invariance indicates that
    \begin{itemize}
      \item[(a)]$f(p)$ is invariant under the transformation $\bar{x}=g\circ x$, $\forall g\in G$,
      \item[(b)]with a parametrization $r=r(p)$, $f(p)dp=\bar{f}(r)dr$ (invariance of the tensor under a change of the basis $\displaystyle \frac{\partial}{\partial p}$).
    \end{itemize}
    Note that without loss of generality, we may assume $\displaystyle \frac{dr}{dp}>0$.
    Then the arc length element can be represented as
    \begin{equation}\label{grp-arc}
    ds:=f(p)dp,
    \end{equation}
    which guarantees the integral $\displaystyle \int^a_bf(p)dp$ is invariant of reparametrization and group of transformations.
    Here are some examples to clarify this point.
    \begin{exmp}\label{exm-se}
    As a most simple one illustrating our concern, we consider the Euclidean group $\text{SE}(n)=\text{SO}(n)\ltimes\R^n$ (a group consisting of the orientation preserving rigid motions of $\R^n$) acting on $x(p)$.
    Assume $\displaystyle\left\|\frac{dx}{dp}\right\|\neq 0$. Obviously, the function $\displaystyle f(p)=\left\|\frac{dx}{dp}\right\|$ satisfies the conditions (a) and (b), and it is well known that the Euclidean arc length element is denoted as
    $\displaystyle ds=\left\|\frac{dx}{dp}\right\|dp.$
    \end{exmp}
    \begin{exmp}\label{exm-sim}
    Any {\it similarity} $F:\R^n\rightarrow\R^n$ can be expressed in the form $F(x)=\lambda Ax+b$, where $x\in\R^n$ is an arbitrary point, $A$ is an orthogonal $n\times n$ matrix, $b$ is a translation vector, and the real constant $\lambda\neq0$. The group $\text{Sim}(n)$ consists of all orientation-preserving similarities of $\R^n$.
    The arc length element for group $\text{Sim}(n)$ acting on $x(p)$ has been studied in \cite{cq} and \cite{eg}. Suppose that $\displaystyle\left\|\frac{dx}{dp}\right\|\neq 0$ and $\displaystyle\left\|x_{p^2}-\frac{\ip{x_p}{x_{p^2}}}{\ip{x_p}{x_p}}x_p\right\|\neq0$. It is easy to verify the function
    $\displaystyle \frac{\left\|x_{p^2}-\frac{\ip{x_p}{x_{p^2}}}{\ip{x_p}{x_p}}x_p\right\|}{\|x_p\|}$ meets the conditions (a) and (b). Thus in similarity geometry, we have the arc length element
    $\displaystyle
     ds=\frac{\left\|x_{p^2}-\frac{\ip{x_p}{x_{p^2}}}{\ip{x_p}{x_p}}x_p\right\|}{\|x_p\|}dp,
    $
    which is coincident with the definition of similarity arc length in \cite{cq} and \cite{eg}.
    \end{exmp}
    \begin{exmp}\label{exm-cea}
     The differential geometry invariant to the action of special linear group $\text{SL}(n)$ is called {\it centro-equi-affine} differential geometry.
    Consider the group $\text{SL}(n)$ acting on $x(p)$, and suppose that  $x,x_p,x_{p^2},\cdots,x_{p^{n-1}}$ are linear independent.
    The corresponding function $f(p)$ occurring in \eqref{grp-arc} can be obtained by $\displaystyle \left(\bm{[}x,x_p,x_{p^2},\cdots,x_{p^{n-1}}\bm{]}\right)^{\frac{2}{n(n-1)}}$,  and it is easy to see that $\displaystyle
       ds=\left(\bm{[}x,x_p,x_{p^2},\cdots,x_{p^{n-1}}\bm{]}\right)^{\frac{2}{n(n-1)}}dp$ is defined as the centro-equi-affine arc length element (also see \cite{gs,olv-5}).
    \end{exmp}
    \begin{exmp}\label{exm-ca}
    \emph{Centro-affine} differential geometry refers to the geometry induced by the general linear group $x \longmapsto Ax$, $A\in \text{GL}(n)$, $x \in \Rn$, which is the subgroup of the affine transformation group that keeps the origin fixed. Consider the the general linear group $\text{GL}(n)$ acting on $x(p)$, and suppose $\bm{[}x,x_{p},\cdots,x_{p^{n-1}}\bm{]}\neq0$, $\bm{[}x_p,x_{p^2},\cdots,x_{p^n}\bm{]}\neq0$.
    The corresponding function $f(p)$ occurring in \eqref{grp-arc} can be expressed by $\displaystyle\left(\epsilon\frac{\bm{[}x_p,x_{p^2},\cdots,x_{p^n}\bm{]}}{\bm{[}x,x_{p},\cdots,x_{p^{n-1}}\bm{]}}\right)^{\frac{1}{n}}$, where
    $\displaystyle\epsilon=\text{sgn}\left(\frac{\bm{[}x_p,x_{p^2},\cdots,x_{p^n}\bm{]}}{\bm{[}x,x_{p},\cdots,x_{p^{n-1}}\bm{]}}\right)$, and the centro-affine arc length element (also see \cite{gw}) is
    $\displaystyle
       ds=\left(\epsilon\frac{\bm{[}x_p,x_{p^2},\cdots,x_{p^n}\bm{]}}{\bm{[}x,x_{p},\cdots,x_{p^{n-1}}\bm{]}}\right)^{\frac{1}{n}}dp.
    $
    \end{exmp}
    \begin{exmp}\label{exm-ea}
    {\it Equi-affine} differential geometry refers to volume-preserving affine differential geometry in which one restricts to the
    subgroup $\text{SA}(n)=\text{SL}(n)\ltimes\R^n$.
    Consider the group $\text{SA}(n)$ acting on $x(p)$, and let us assume $\displaystyle \bm{[}x_p,x_{p^2},\cdots,x_{p^n}\bm{]}\neq0$. By \cite{cot, dav0,wan1,wan2}, we find
    the corresponding function $f(p)$ occurring in \eqref{grp-arc} can be obtained though $\displaystyle \left(\bm{[}x_p,x_{p^2},\cdots,x_{p^n}\bm{]}\right)^{\frac{2}{n(n+1)}}$, and $\displaystyle
       ds=\left(\bm{[}x_p,x_{p^2},\cdots,x_{p^n}\bm{]}\right)^{\frac{2}{n(n+1)}}dp
    $  is considered as the equi-affine arc length element.
    \end{exmp}
 \subsection{The curvatures of a curve related to the group}
    All others differential invariants can be found by differentiation
    with respect to a group-invariant arc length element. In particular,
    once defining the group-invariant arc length parameter for a curve, the curvatures (or differential invariants) can be generated by the successive derivatives of the curve $x$ with respect to the arc length parameter and
    the linearly dependent coefficients. We still provide some examples to demonstrate its validity.
 \subsubsection{Euclidean differential invariants}
    Let $I\subset \R$ and consider a curve $x:I\rightarrow \R^n$ parametrized by Euclidean arc length $s$ defined as in Example \ref{exm-se}.
    The tangent vector field $V_1$ can be defined as the unit vector in the direction of $\displaystyle\frac{dx}{ds}$.
    The second basic unit vector field $V_2$ lies in the subspace spanned by the vector fields $\displaystyle\left\{\frac{dx}{ds}, \frac{d^2x}{ds^2}\right\}$,
    is perpendicular to $V_1$ and together with $V_1$ spans an area of $1$. The third basis vector, $V_3$, is in the subspace spanned by
    $\displaystyle\left\{\frac{dx}{ds}, \frac{d^2x}{ds^2},\frac{d^3x}{ds^3}\right\}$, is of unit length, is perpendicular to $V_1$ and $V_2$ and together with $V_1$ and $V_2$ spans a volume of $1$. Proceeding in this fashion, the $(k+1)$st basis vector is in the space spanned by $\displaystyle\left\{\frac{d^ix}{ds^i}: i=1,2,\cdots,k+1\right\}$, is of unit length, is perpendicular to $\{V_i:i=1,2,\cdots,k\}$, and together with $\{V_i:i=1,2,\cdots,k\}$ spans a volume of $1$.

    Given a smooth curve parametrized by Euclidean arc length $s$, the Euclidean curvature is given by
    $\displaystyle\kappa=\frac{dV_1}{ds}\cdot V_2$ and the higher Euclidean torsions are given by
    $\displaystyle\tau_i=\frac{dV_{i+1}}{ds}\cdot V_{i+2}, i=1,\cdots, n-2$. As shown in \cite{dav0}, the Frenet-Serret formulas are described by
    \begin{small}
    \begin{align*}
    \frac{dx}{ds}&=\;V_1,\\
    \frac{dV_1}{ds}&=\;\kappa V_2,\\
    \frac{dV_2}{ds}&=\;-\kappa V_2+\tau_1V_3,\\
    &\quad\cdots,  \cdots\\
    \frac{dV_{n-1}}{ds}&=\;-\tau_{n-3}V_{n-1}+\tau_{n-2}V_n,\\
    \frac{dV_{n}}{ds}&=\;-\tau_{n-2}V_{n-1}.
    \end{align*}
    \end{small}%
 \subsubsection{Similarity differential invariants}
    Assume the curve $x:I\rightarrow \R^n$ is parametrized by similarity arc length $s$ defined as in Example \ref{exm-sim}.
    We define the tangent $V_1$ as $\displaystyle\frac{dx}{ds}$.
    The second basic vector $V_2$ is in the subspace spanned by $\displaystyle\left\{\frac{dx}{ds}, \frac{d^2x}{ds^2}\right\}$, and
    is perpendicular to $V_1$. The third basis vector, $V_3$, is in the subspace spanned by
    $\displaystyle\left\{\frac{dx}{ds}, \frac{d^2x}{ds^2},\frac{d^3x}{ds^3}\right\}$, and is perpendicular to $V_1$ and $V_2$ . Proceeding in this fashion, the $(k+1)$st basis vector is in the space spanned by $\displaystyle\left\{\frac{d^ix}{ds^i}: i=1,2,\cdots,k+1\right\}$, and is perpendicular to $\{V_i:i=1,2,\cdots,k\}$.

    Hence, given a smooth curve parametrized by similarity arc length $s$, the similarity curvatures are generated by
    $\displaystyle\alpha_1=-\frac{\frac{dV_1}{ds}\cdot V_1}{V_1\cdot V_1}$ and
    $\displaystyle\alpha_i=\frac{\frac{dV_{i}}{ds}\cdot V_{i+1}}{V_{i+1}\cdot V_{i+1}}$, $i=2,\cdots, n-1$. In \cite{cq}, the following Frenet-Serret formulas for curves in similarity geometry have been established
    \begin{small}
    \begin{align*}
    \frac{dx}{ds}&=\;V_1,\\
    \frac{dV_1}{ds}&=\;-\alpha_1V_1+V_2,\\
    \frac{dV_2}{ds}&=\;-V_1-\alpha_1V_2+\alpha_2V_3,\\
    \frac{dV_3}{ds}&=\;-\alpha_2V_2-\alpha_1V_3+\alpha_3V_4,\\
    &\quad\cdots, \cdots\\
    \frac{dV_{n-1}}{ds}&=\;-\alpha_{n-2}V_{n-2}-\alpha_1V_{n-1}+\alpha_{n-1}V_n,\\
    \frac{dV_{n}}{ds}&=\;-\alpha_{n-1}V_{n-1}-\alpha_1V_n.
    \end{align*}
    \end{small}%
 \subsubsection{Centro-equiaffine differential invariants}
    For the curve $x:I\rightarrow\R^n$, by choosing the centro-equi-affine arc length parameter $s$ defined as in Example \ref{exm-cea}, we can see
    \begin{small}
    \begin{equation*}
      \bm{[}x,x_{s},\cdots,x_{s^{n-1}}\bm{]}=\epsilon, \qquad \epsilon=1\;\mathrm{or} \;-1.
    \end{equation*}
    \end{small}%
    Differentiating with respect to $s$  yields
    \begin{small}
    \begin{equation*}
      \bm{[}x,x_{s},\cdots,x_{s^{n-2}},x_{s^n}\bm{]}=0.
    \end{equation*}
    \end{small}%
    Hence it follows
    \begin{small}
    \begin{equation*}
      x_{s^n}=\mu_0x+\mu_1x_s+\mu_2x_{s^2}+\cdots+\mu_{n-2}x_{s^{n-2}}.
    \end{equation*}
    \end{small}%
    The functions
    \begin{small}
    \begin{equation*}
      \mu_0=\epsilon\bm{[}x_{s^n},x_s,\cdots,x_{s^{n-1}}\bm{]},\quad \mu_i=\epsilon\bm{[}x,x_s,\cdots,x_{s^{i-1}},x_{s^n},x_{s^{i+1}},\cdots,x_{s^{n-1}}\bm{]},
    \end{equation*}
    \end{small}%
    are called the centro-equi-affine curvatures of $x$ (see \cite{gs,olv-5} for details), where $i=1,2,\cdots,n-2$.

 \subsubsection{Centro-affine differential invariants}
    Let $x:I\rightarrow\R^n$ be a curve parametrized by centro-affine arc length $s$ defined as in Example \ref{exm-ca}. According to Example \ref{exm-ca}, it is easy to see
    \begin{equation*}
    \frac{\bm{[}x_s,x_{s^2},\cdots,x_{s^n}\bm{]}}{\bm{[}x,x_{s},\cdots,x_{s^{n-1}}\bm{]}}=\epsilon,\qquad \epsilon=1\;\text{or} -1.
    \end{equation*}
    A direct computation shows
    \begin{equation*}
    x_{s^n}+(-1)^{n-1}\epsilon x=\mu_1x_s+\mu_2x_{s^2}+\cdots+\mu_{n-1}x_{s^{n-1}}.
    \end{equation*}
    Then the centro-affine differential invariants are given by
    \begin{equation*}
      \mu_i=\frac{\bm{[}x,x_{s},\cdots,x_{s^{i-1}},x_{s^n},x_{s^{i+1}},\cdots,x_{s^{n-1}}\bm{]}}{\bm{[}x,x_{s},\cdots,x_{s^{n-1}}\bm{]}},\qquad i=1,2,\cdots,n-1,
    \end{equation*}
    which are called centro-affine curvatures (see \cite{gw} for details).

 \subsubsection{Equi-affine differential invariants}\label{ea-ivs}
    Let $x:I\rightarrow\R^n$ be parametrized by equi-affine arc length $s$ defined as in Example \ref{exm-ea}, so that
    \begin{equation*}
      \bm{[}x_s,x_{s^2},\cdots,x_{s^n}\bm{]}=\epsilon,\qquad \epsilon=1\;\text{or} -1.
    \end{equation*}
    Differentiating with respect to $s$ then gives
    \begin{equation*}
      \bm{[}x_s,x_{s^2},\cdots,x_{s^{n-1}},x_{s^{n+1}}\bm{]}=0.
    \end{equation*}
    Therefore, it follows
    \begin{equation*}
      x_{s^{n+1}}=\mu_1x_s+\mu_2x_{s^2}+\cdots+\mu_{n-1}x_{s^{n-1}}.
    \end{equation*}
    The functions
    \begin{equation*}
      \mu_i=\epsilon\bm{[}x_s,\cdots,x_{s^{i-1}},x_{s^{n+1}},x_{s^{i+1}},\cdots,x_{s^n}\bm{]},\qquad i=1,2\cdots,n-1
    \end{equation*}
    are called the equi-affine curvatures of $x$ (see \cite{cot, dav0,wan1,wan2} for details).

\section{The fully affine arc length in higher dimensional space}\label{affine-high}

    Now, we utilize the approach mentioned in Section \ref{invariant-group} to determine the arc length element and differential invariants in fully affine geometry.
    In fact, the fully affine arc length element and differential invariants for plane curves and space curves have been achieved in \cite{ks}.
    Example \ref{exm-f-r2} and Example \ref{exm-f-r3} describe the fully affine arc length element for plane curves and space curves expressed by the same fashion as in Section \ref{invariant-group}.
    However, it is not as straightforward as those appearing in Section \ref{invariant-group} to derive the fully affine arc length element in higher dimensional space $\R^n$ (for details see Example \ref{exm-f-r2}, Example \ref{exm-f-r3} and Theorem \ref{thm-higher}).

    Let us begin to deduce the expressions of the fully affine arc length parameter and curvatures for the curves
    in $\R^n$ under the fully affine group $\mathrm{GA}(n)=\mathrm{GL}(n)\ltimes\R^n$.

    Suppose that $x:p\rightarrow x(p)\in \R^n$ is a curve in $n$-dimensional affine space $\R^n$ with parameter $p$.  For another parameter $r=r(p)$, we have
    \begin{small}
    \begin{align}\label{rp-1-4}
       \begin{aligned}
       x_p&=\;x_r\frac{dr}{dp},\\
       x_{p^2}&=\;x_{r^2}\left(\frac{dr}{dp}\right)^2+x_r\frac{d^2r}{dp^2},\\
       x_{p^3}&=\;x_{r^3}\left(\frac{dr}{dp}\right)^3+3x_{r^2}\frac{dr}{dp}\frac{d^2r}{dp^2}+x_r\frac{d^3r}{dp^3},\\
       x_{p^4}&=\;x_{r^4}\left(\frac{dr}{dp}\right)^4+6x_{r^3}\left(\frac{dr}{dp}\right)^2\frac{d^2r}{dp^2}       \\
       &\qquad\qquad\qquad\qquad\quad +x_{r^2}\left(3\left(\frac{d^2r}{dp^2}\right)^2+4\frac{dr}{dp}\frac{d^3r}{dp^3}\right)+x_r\frac{d^4r}{dp^4},
       \end{aligned}
    \end{align}
    \end{small}%
    where  $x_{p^k}=\frac{d^kx}{dp^k}$ and $x_{r^k}=\frac{d^kx}{dr^k}$ for a positive integer $k$.

    Note that in the following we use the representation $\displaystyle\binom{n}{k}$ to denote the binomial coefficient $\displaystyle\frac{n!}{(n-k)!k!}$. Then, a direct computation shows
    \begin{lem}\label{lem-rp}
    For a positive integer $k>4$, we have the following iterations
    \begin{small}
    \begin{align*}
       x_{p^k}&=\;x_{r^k}\left(\frac{dr}{dp}\right)^k+x_{r^{k-1}}\binom{k}{2}\left(\frac{dr}{dp}\right)^{k-2}\frac{d^2r}{dp^2}\\
             &\qquad\qquad\qquad\quad          +x_{r^{k-2}}\left(A_k\left(\frac{dr}{dp}\right)^{k-4}\left(\frac{d^2r}{dp^2}\right)^2+B_k\left(\frac{dr}{dp}\right)^{k-3}\frac{d^3r}{dp^3}\right)\\
             &\qquad\qquad\qquad\quad\; \mod \Big(x_{r^{k-3}},\;\cdots,\;x_r\Big),
    \end{align*}
    \end{small}%
    where $A_4=3$, $B_4=4$, $\displaystyle A_k=A_{k-1}+(k-3)\binom{k-1}{2}$ and $\displaystyle B_k=B_{k-1}+\binom{k-1}{2}$.
    Moreover, we have
    \begin{equation*}
    A_k=\frac{k(k-1)(k-2)(k-3)}{8},\qquad B_k=\frac{k(k-1)(k-2)}{6}.
    \end{equation*}
    \end{lem}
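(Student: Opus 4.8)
The plan is to prove the iteration by induction on $k$, using only the elementary chain rule $\frac{d}{dp}x_{r^j}=x_{r^{j+1}}\frac{dr}{dp}$. The base case $k=4$ is exactly the last equation in \eqref{rp-1-4}, whose $x_{r^2}$-coefficient reads off as $A_4=3$, $B_4=4$, and whose $x_{r^3}$-coefficient is $\binom{4}{2}=6$. For the inductive step I would take the expansion asserted for $x_{p^{k-1}}$, differentiate it once with respect to $p$, and collect the coefficients of $x_{r^k}$, $x_{r^{k-1}}$ and $x_{r^{k-2}}$.

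The key structural observation, and the one point that must be handled carefully, is that the induction has to carry \emph{three} leading coefficients, not one. Indeed, when $x_{p^{k-1}}$ is differentiated, the coefficient of $x_{r^{k-2}}$ in $x_{p^k}$ picks up two distinct contributions: one from differentiating the scalar coefficient of the $x_{r^{k-2}}$ term of $x_{p^{k-1}}$, and one from the index-raising $\frac{d}{dp}x_{r^{k-3}}=x_{r^{k-2}}\frac{dr}{dp}$ acting on its $x_{r^{k-3}}$ term. Thus the $A_{k-1},B_{k-1}$ coefficients feed directly into $A_k,B_k$. Dually, everything swept into $\mod(x_{r^{k-4}},\dots,x_r)$ in $x_{p^{k-1}}$ produces, after a single differentiation, only terms of order $\le x_{r^{k-3}}$, that is, terms already inside $\mod(x_{r^{k-3}},\dots,x_r)$ for $x_{p^k}$; hence they never disturb the three coefficients being tracked and the recursion closes cleanly. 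This is where I expect the main obstacle to lie, so I would concentrate the written argument on this bookkeeping.

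Performing the differentiation and matching monomials in $\frac{dr}{dp},\frac{d^2r}{dp^2},\frac{d^3r}{dp^3}$, I expect the coefficient of $x_{r^k}$ to be $\left(\frac{dr}{dp}\right)^k$; the coefficient of $x_{r^{k-1}}\left(\frac{dr}{dp}\right)^{k-2}\frac{d^2r}{dp^2}$ to be $(k-1)+\binom{k-1}{2}=\frac{k(k-1)}{2}=\binom{k}{2}$; the coefficient of $x_{r^{k-2}}\left(\frac{dr}{dp}\right)^{k-4}\left(\frac{d^2r}{dp^2}\right)^2$ to be $A_{k-1}+(k-3)\binom{k-1}{2}$; and the coefficient of $x_{r^{k-2}}\left(\frac{dr}{dp}\right)^{k-3}\frac{d^3r}{dp^3}$ to be $B_{k-1}+\binom{k-1}{2}$. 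These are precisely the claimed recursions defining $A_k$ and $B_k$.

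It then remains to solve these recursions, which is routine. I would verify that the proposed closed forms satisfy the same recursion and initial data: one checks $\frac{k(k-1)(k-2)(k-3)}{8}-\frac{(k-1)(k-2)(k-3)(k-4)}{8}=\frac{(k-1)(k-2)(k-3)}{2}=(k-3)\binom{k-1}{2}$ and $\frac{k(k-1)(k-2)}{6}-\frac{(k-1)(k-2)(k-3)}{6}=\frac{(k-1)(k-2)}{2}=\binom{k-1}{2}$, together with $A_4=3$, $B_4=4$, so uniqueness of the solution of a first-order recursion yields the closed forms. As an independent sanity check one may note that $A_k$ and $B_k$ are exactly the coefficients of $(r')^{k-4}(r'')^2$ and $(r')^{k-3}r'''$ in the incomplete Bell polynomial $B_{k,k-2}$ of the Fa\`a di Bruno formula, which is the only source of an $x_{r^{k-2}}$ term.
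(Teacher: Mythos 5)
Your proof is correct and is essentially the computation the paper has in mind: the paper offers no written argument beyond ``a direct computation shows,'' and the recursions $A_k=A_{k-1}+(k-3)\binom{k-1}{2}$, $B_k=B_{k-1}+\binom{k-1}{2}$ built into the statement of Lemma \ref{lem-rp} are exactly what your differentiate-and-collect inductive step produces. Your careful handling of the three tracked coefficients and of the terms swept into $\mathrm{mod}\,(x_{r^{k-3}},\dots,x_r)$, together with the verification of the closed forms, fills in precisely the bookkeeping the paper leaves implicit.
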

    \begin{lem}\label{lem-solP}
    Take $A_k$ and $B_k$ as in the previous lemma. The solutions to the system of equations with respect to $\alpha, \beta$ and $\gamma$
    \begin{small}
    \begin{equation*}
       \left\{
        \begin{aligned}
        &\alpha \binom{n+2}{2}-2\beta \binom{n+1}{2}+\gamma \binom{n}{2}=0,\\
        &\alpha B_{n+2}-\gamma B_{n+1}=0,\\
        &\alpha A_{n+2}-\beta \binom{n+1}{2}^2+\gamma\left(\binom{n+1}{2}\binom{n}{2}-A_{n+1}\right)=0,
        \end{aligned}
        \right.
    \end{equation*}
    \end{small}
    are $\alpha=\omega n(n+1)(n-1)$, $\beta=\omega\frac{(n-1)(n+2)(2n+1)}{2}$, $\gamma=\omega n(n+1)(n+2)$, where $\omega$ is an arbitrary constant.
    \end{lem}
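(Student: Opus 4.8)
The plan is to treat the three equations as a homogeneous $3\times 3$ linear system in the unknowns $\alpha,\beta,\gamma$, and to argue in two stages: first that the proposed triple is a solution, and second that the solution space is exactly one-dimensional, so that the stated one-parameter family exhausts all solutions.

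First I would substitute the closed forms $\binom{n+2}{2}=\frac{(n+2)(n+1)}{2}$, $\binom{n+1}{2}=\frac{(n+1)n}{2}$, $\binom{n}{2}=\frac{n(n-1)}{2}$, together with $A_k=\frac{k(k-1)(k-2)(k-3)}{8}$ and $B_k=\frac{k(k-1)(k-2)}{6}$ from Lemma \ref{lem-rp}. Taking $\omega=1$ for definiteness, I would check each equation by pulling out a common polynomial factor. The second equation is immediate: both $\alpha B_{n+2}$ and $\gamma B_{n+1}$ collapse to $\frac{n^2(n+1)^2(n-1)(n+2)}{6}$. For the first equation, after factoring out $\frac{n(n+1)(n-1)(n+2)}{2}$ the three terms contribute $(n+1)$, $-(2n+1)$ and $n$, whose sum vanishes. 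The third equation is the most laborious: after simplifying the bracket $\binom{n+1}{2}\binom{n}{2}-A_{n+1}$ to $\frac{(n+1)n(n-1)(n+2)}{8}$ and factoring out $\frac{n^2(n+1)^2(n-1)(n+2)}{8}$, the three terms reduce to $(n-1)$, $-(2n+1)$ and $(n+2)$, again summing to zero. This last verification is the main obstacle, though it is entirely routine once the correct common factor is identified.

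Finally, for uniqueness I would observe that exhibiting a nonzero solution (for $n\geq 2$ one has $\alpha=n(n+1)(n-1)\neq 0$) already forces the coefficient matrix to be singular, so its rank is at most $2$. To see that the rank equals $2$, I would note that the first two rows, namely $\big(\binom{n+2}{2},\,-2\binom{n+1}{2},\,\binom{n}{2}\big)$ and $\big(B_{n+2},\,0,\,-B_{n+1}\big)$, cannot be proportional: the middle entry $-2\binom{n+1}{2}=-n(n+1)$ of the first is nonzero while the middle entry of the second vanishes. Hence these two rows are linearly independent, the rank is exactly $2$, the null space is one-dimensional, and the family $\alpha=\omega n(n+1)(n-1)$, $\beta=\omega\frac{(n-1)(n+2)(2n+1)}{2}$, $\gamma=\omega n(n+1)(n+2)$ with $\omega$ arbitrary constitutes every solution, as claimed.
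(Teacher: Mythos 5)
Your proposal is correct: all three substitutions check out (the common factors and the residual linear factors $(n+1)-(2n+1)+n$ and $(n-1)-(2n+1)+(n+2)$ are exactly right), and the rank argument for uniqueness is sound. The paper itself offers no explicit proof of this lemma — it is presented as a direct computation — so your verification is precisely the intended argument; your only genuine addition is the observation that rows one and two of the coefficient matrix are independent, which is needed to justify the word \emph{the} in ``the solutions are'' and which the paper's implicit treatment glosses over.
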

    In the following, for the curve $x:p\rightarrow x(p)\in \R^n$, assume $x_p, x_{p^2}, \cdots, x_{p^n}$ are linear independent.
    \begin{proof}[Proof of Theorem \ref{thm-higher}]
    According to \eqref{rp-1-4} and Lemma \ref{lem-rp}, one can see
    \begin{small}
    \begin{align*}
       x_{p^{n+1}}&=\;x_{r^{n+1}}\left(\frac{dr}{dp}\right)^{n+1}+x_{r^n}\binom{n+1}{2}\left(\frac{dr}{dp}\right)^{n-1}\frac{d^2r}{dp^2}\\
             &\qquad\qquad\qquad\quad          +x_{r^{n-1}}\left(A_{n+1}\left(\frac{dr}{dp}\right)^{n-3}\left(\frac{d^2r}{dp^2}\right)^2+B_{n+1}\left(\frac{dr}{dp}\right)^{n-2}\frac{d^3r}{dp^3}\right)\\
             &\qquad\qquad\qquad\quad \mod \Big(x_{r^{n-2}},\;\cdots,\;x_r\Big),
    \end{align*}
    \end{small}%
    and
    \begin{small}
    \begin{align*}
       x_{p^n}=x_{r^n}\left(\frac{dr}{dp}\right)^n+x_{r^{n-1}}\binom{n}{2}\left(\frac{dr}{dp}\right)^{n-2}\frac{d^2r}{dp^2} \mod (x_{r^{n-2}},\;\cdots,\;x_r).
    \end{align*}
    \end{small}%
    Thus, it is not hard to verify the following several relations.
    \begin{small}
    \begin{equation*}
    \bm{[}x_{p^n},x_{p^{n-1}},\cdots,x_{p^2},x_p\bm{]}=\left(\frac{dr}{dp}\right)^{\frac{n(n+1)}{2}}\bm{[}x_{r^n},x_{r^{n-1}},\cdots,x_{r^2},x_r\bm{]}.
    \end{equation*}
    \begin{align*}
    \bm{[}x_{p^{n+2}},x_{p^{n-1}},\cdots,x_p\bm{]}&=\;\left(\frac{dr}{dp}\right)^{\frac{n(n-1)}{2}}\bm{[}x_{p^{n+2}},x_{r^{n-1}},\cdots,x_{r^2},x_r\bm{]}\\
    &=\;\left(\frac{dr}{dp}\right)^{\frac{n(n+1)}{2}+2}\bm{[}x_{r^{n+2}},x_{r^{n-1}},\cdots,x_{r^2},x_r\bm{]}\\
    &\qquad+\binom{n+2}{2}\left(\frac{dr}{dp}\right)^{\frac{n(n+1)}{2}}\frac{d^2r}{dp^2}\bm{[}x_{r^{n+1}},x_{r^{n-1}},\cdots,x_{r^2},x_r\bm{]}\\
    &\qquad\;+\left(\frac{dr}{dp}\right)^{\frac{n(n+1)}{2}-2}\left(A_{n+2}\left(\frac{d^2r}{dp^2}\right)^2+B_{n+2}\frac{dr}{dp}\frac{d^3r}{dp^3}\right)\\
    &\qquad\;\qquad\qquad\qquad\qquad\times\bm{[}x_{r^n},x_{r^{n-1}},\cdots,x_{r^2},x_r\bm{]}.
    \end{align*}
    \begin{align*}
    \bm{[}x_{p^{n+1}},x_{p^{n-1}},\cdots,x_p\bm{]}&=\;\left(\frac{dr}{dp}\right)^{\frac{n(n-1)}{2}}\bm{[}x_{p^{n+1}},x_{r^{n-1}},\cdots,x_{r^2},x_r\bm{]}\\
    &=\;\left(\frac{dr}{dp}\right)^{\frac{n(n+1)}{2}+1}\bm{[}x_{r^{n+1}},x_{r^{n-1}},\cdots,x_{r^2},x_r\bm{]}\\
    &\qquad\;+\left(\frac{dr}{dp}\right)^{\frac{n(n+1)}{2}-1}\frac{d^2r}{dp^2}\binom{n+1}{2}\bm{[}x_{r^n},x_{r^{n-1}},\cdots,x_{r^2},x_r\bm{]}.
    \end{align*}
    \begin{align*}
    \bm{[}x_{p^{n+1}},x_{p^n},x_{p^{n-2}},\cdots,x_p\bm{]}&=\;\left(\frac{dr}{dp}\right)^{\frac{(n-2)(n-1)}{2}}\bm{[}x_{p^{n+1}},x_{p^n},x_{r^{n-2}},\cdots,x_{r^2},x_r\bm{]}\\
    &=\;\left(\frac{dr}{dp}\right)^{\frac{n(n+1)}{2}+2}\bm{[}x_{r^{n+1}},x_{r^n},x_{r^{n-2}},\cdots,x_{r^2},x_r\bm{]}\\
    &+\binom{n}{2}\frac{d^2r}{dp^2}\left(\frac{dr}{dp}\right)^{\frac{n(n+1)}{2}}\bm{[}x_{r^{n+1}},x_{r^{n-1}},\cdots,x_{r^2},x_r\bm{]}\\
    &+\Bigg(\left(\frac{d^2r}{dp^2}\right)^2\left(\binom{n+1}{2}\binom{n}{2}-A_{n+1}\right)-\frac{dr}{dp}\frac{d^3r}{dp^3}B_{n+1}\Bigg)\\
    &\qquad\quad\times\left(\frac{dr}{dp}\right)^{\frac{n(n+1)}{2}-2}\bm{[}x_{r^n},x_{r^{n-1}},x_{r^{n-2}},\cdots,x_r\bm{]}.
    \end{align*}
    \end{small}%
    A trivial computation shows
    \begin{small}
    \begin{align*}
     &\alpha\frac{\bm{[}x_{p^{n+2}},x_{p^{n-1}},\cdots,x_{p^2},x_p\bm{]}}{\bm{[}x_{p^n},x_{p^{n-1}},\cdots,x_{p^2},x_p\bm{]}}
     -\beta\left(\frac{\bm{[}x_{p^{n+1}},x_{p^{n-1}},\cdots,x_{p^2},x_p\bm{]}}{\bm{[}x_{p^n},x_{p^{n-1}},\cdots,x_{p^2},x_p\bm{]}}\right)^2\\
     &\qquad\qquad\qquad\qquad\qquad\;+\gamma\frac{\bm{[}x_{p^{n+1}},x_{p^n},x_{p^{n-2}},\cdots,x_{p^2},x_p\bm{]}}{\bm{[}x_{p^n},x_{p^{n-1}},\cdots,x_{p^2},x_p\bm{]}}\\
     &=\left(\frac{dr}{dp}\right)^2
     \Bigg(\alpha\frac{\bm{[}x_{r^{n+2}},x_{r^{n-1}},\cdots,x_{r^2},x_r\bm{]}}{\bm{[}x_{r^n},x_{r^{n-1}},\cdots,x_{r^2},x_r\bm{]}}
     -\beta\left(\frac{\bm{[}x_{r^{n+1}},x_{r^{n-1}},\cdots,x_{r^2},x_r\bm{]}}{\bm{[}x_{r^n},x_{r^{n-1}},\cdots,x_{r^2},x_r\bm{]}}\right)^2\\
     &\qquad\qquad\qquad\qquad\qquad\;+\gamma\frac{\bm{[}x_{r^{n+1}},x_{r^n},x_{r^{n-2}},\cdots,x_{r^2},x_r\bm{]}}{\bm{[}x_{r^n},x_{r^{n-1}},\cdots,x_{r^2},x_r\bm{]}}\Bigg)+P_1+P_2,
    \end{align*}
    \end{small}%
    where
    \begin{small}
    \begin{align*}
     P_1&=\;\left(\alpha \binom{n+2}{2}-2\beta \binom{n+1}{2}+\gamma \binom{n}{2}\right)\frac{d^2r}{dp^2}\frac{\bm{[}x_{r^{n+1}},x_{r^{n-1}},\cdots,x_{r^2},x_r\bm{]}}{\bm{[}x_{r^n},x_{r^{n-1}},\cdots,x_{r^2},x_r\bm{]}},\\
     P_2&=\;\left(\alpha A_{n+2}-\beta \binom{n+1}{2}^2+\gamma\left(\binom{n+1}{2}\binom{n}{2}-A_{n+1}\right)\right)\left(\frac{dr}{dp}\right)^{-2}\left(\frac{d^2r}{dp^2}\right)^2\\
     &\qquad\;\qquad\qquad\qquad\qquad\qquad\qquad\qquad\qquad+\left(\alpha B_{n+2}-\gamma B_{n+1}\right)\left(\frac{dr}{dp}\right)^{-1}\frac{d^3r}{dp^3}.
    \end{align*}
    \end{small}
    Lemma \ref{lem-solP} implies $P_1=0$ and $P_2=0$, and then we complete the proof of Theorem \ref{thm-higher}.
    \end{proof}
    From now on, we denote
    \begin{small}
    \begin{align*}
       F:&=\;\alpha\frac{\bm{[}x_{p^{n+2}},x_{p^{n-1}},\cdots,x_{p^2},x_p\bm{]}}{\bm{[}x_{p^n},x_{p^{n-1}},\cdots,x_{p^2},x_p\bm{]}}
     -\beta\left(\frac{\bm{[}x_{p^{n+1}},x_{p^{n-1}},\cdots,x_{p^2},x_p\bm{]}}{\bm{[}x_{p^n},x_{p^{n-1}},\cdots,x_{p^2},x_p\bm{]}}\right)^2\\
     &\qquad\qquad\qquad\qquad\qquad\qquad\qquad\qquad\;+\gamma\frac{\bm{[}x_{p^{n+1}},x_{p^n},x_{p^{n-2}},\cdots,x_{p^2},x_p\bm{]}}{\bm{[}x_{p^n},x_{p^{n-1}},\cdots,x_{p^2},x_p\bm{]}},
    \end{align*}
    \end{small}%
    and assume
    $F\neq 0$
    for the curve $x(p)\in\R^n$,
    where
    \begin{equation*}
    \alpha=\frac{n(n+1)(n-1)}{\omega},\; \beta=\frac{(n-1)(n+2)(2n+1)}{2\omega}, \;\gamma=\frac{n(n+1)(n+2)}{\omega},
    \end{equation*}
     and $\omega$ is the greatest common divisor of
    $\displaystyle n(n+1)(n-1)$, $n(n+1)(n+2)$ and $\displaystyle\frac{(n-1)(n+2)(2n+1)}{2}$.
    Then we can define the fully affine arc length element in $\R^n$.
    \begin{defn}\label{defn-fal}
    The fully affine arc length element for the curve $x:p\rightarrow x(p)\in \R^n$ is defined as
    \begin{equation*}
    d\xi =\sqrt{\epsilon F}dp, \qquad \epsilon=\mathrm{sgn}(F).
    \end{equation*}
    \end{defn}
    \begin{rem}
     This definition for the fully affine arc length parameter in $\R^n$ is in accordance with the definitions in Example \ref{exm-f-r2} and Example \ref{exm-f-r3}.
    \end{rem}
    With the fully affine arc length parameter $\xi$, we have
    \begin{align}\label{arc-p}
       &\alpha\frac{\bm{[}x_{\xi^{n+2}},x_{\xi^{n-1}},\cdots,x_{\xi^2},x_\xi\bm{]}}{\bm{[}x_{\xi^n},x_{\xi^{n-1}},\cdots,x_{\xi^2},x_\xi\bm{]}}
     -\beta\left(\frac{\bm{[}x_{\xi^{n+1}},x_{\xi^{n-1}},\cdots,x_{\xi^2},x_\xi\bm{]}}{\bm{[}x_{\xi^n},x_{\xi^{n-1}},\cdots,x_{\xi^2},x_\xi\bm{]}}\right)^2\nonumber\\
     &\qquad\;\qquad\qquad\qquad\qquad+\gamma\frac{\bm{[}x_{\xi^{n+1}},x_{\xi^n},x_{\xi^{n-2}},\cdots,x_{\xi^2},x_\xi\bm{]}}{\bm{[}x_{\xi^n},x_{\xi^{n-1}},\cdots,x_{\xi^2},x_\xi\bm{]}}=\epsilon.
    \end{align}
    Assume
    \begin{align}\label{fre-1}
      x_{\xi^{n+1}}=\varphi_1x_{\xi^n}+\lambda x_{\xi^{n-1}}+\varphi_3x_{\xi^{n-2}}+\cdots+\varphi_nx_{\xi}.
    \end{align}
    It follows from \eqref{arc-p} that
    \begin{align*}
      (\alpha-\beta)\varphi_1^2+\alpha \frac{d\varphi_1}{d\xi}+(\alpha-\gamma)\lambda=\epsilon,
    \end{align*}
    that is,
    \begin{align}\label{fre-2}
     \lambda=\omega\frac{(\alpha-\beta)\varphi_1^2+\alpha \frac{d\varphi_1}{d\xi}-\epsilon}{3n(n+1)}.
    \end{align}
    Furthermore, by \eqref{fre-1}, we have
    \begin{align*}
      \varphi_1&=\;\frac{\bm{[}x_{\xi^{n+1}},x_{\xi^{n-1}},\cdots,x_{\xi}\bm{]}}{\bm{[}x_{\xi^n},x_{\xi^{n-1}},\cdots,x_{\xi}\bm{]}}, \\
      \varphi_3&=\;\frac{\bm{[}x_{\xi^n},x_{\xi^{n-1}},x_{\xi^{n+1}},x_{\xi^{n-3}},\cdots,x_{\xi}\bm{]}}{\bm{[}x_{\xi^n},x_{\xi^{n-1}},\cdots,x_{\xi}\bm{]}},\\
      &\cdots,\cdots\\
      \varphi_n&=\;\frac{\bm{[}x_{\xi^n},x_{\xi^{n-1}},\cdots,x_{\xi^2},x_{\xi^{n+1}}\bm{]}}{\bm{[}x_{\xi^n},x_{\xi^{n-1}},\cdots,x_{\xi}\bm{]}}.
    \end{align*}
    \begin{defn}\label{defn-cur}
      $\displaystyle\varphi_1,\varphi_3,\varphi_4,\cdots, \varphi_n$ in \eqref{fre-1} are called the fully affine curvatures of the curve in $\R^n$.
    \end{defn}
    By the existence and unique of the ordinary differential equation system and using the similar proof to that of Theorem 2.5 in \cite{ks}, one can deduce
    \begin{thm}
      Given functions $\displaystyle\varphi_i(\xi)$, $i=1,3,4,\cdots, n$ of a parameter $\xi$ and $\epsilon=\pm1$ with the relations \eqref{fre-1} and \eqref{fre-2}, there exists a curve $x(\xi)$ in $\R^n$ for which $\xi$ is a fully affine length parameter, and $\displaystyle\varphi_i(\xi)$, $i=1,3,4,\cdots, n$
      are the fully curvature functions uniquely up to a fully affine transformation.
    \end{thm}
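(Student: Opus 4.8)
The plan is to recast the Frenet-type structure equation \eqref{fre-1} as a first-order linear system for a moving frame and then invoke the classical existence and uniqueness theory for linear ordinary differential equations, in the same spirit as Theorem~2.5 of \cite{ks}. Collect the frame into the $n\times n$ matrix $\Phi(\xi)=\big(x_\xi,x_{\xi^2},\cdots,x_{\xi^n}\big)$ whose columns are the successive derivatives. Since $\frac{d}{d\xi}x_{\xi^k}=x_{\xi^{k+1}}$ for $k<n$, while $\frac{d}{d\xi}x_{\xi^n}=x_{\xi^{n+1}}$ is prescribed by \eqref{fre-1}, the frame obeys $\frac{d\Phi}{d\xi}=\Phi\,Q(\xi)$, where $Q$ is the companion matrix carrying $1$'s on the subdiagonal and last column $(\varphi_n,\cdots,\varphi_3,\lambda,\varphi_1)^{\mathsf T}$. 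Here $\lambda$ is the prescribed combination \eqref{fre-2} of $\varphi_1$ and $\frac{d\varphi_1}{d\xi}$, so every entry of $Q$ is a smooth function of $\xi$ built from the given data $\varphi_1,\varphi_3,\cdots,\varphi_n$ and $\epsilon$.

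First I would fix a point $\xi_0$ and an invertible initial frame $\Phi_0\in\GL{n,\R}$. The linear system $\frac{d\Phi}{d\xi}=\Phi\,Q(\xi)$ with smooth coefficients admits a unique solution $\Phi(\xi)$ on the whole parameter interval. Abel's identity gives $\det\Phi(\xi)=\det\Phi_0\,\exp\!\big(\int_{\xi_0}^{\xi}\mathrm{tr}\,Q\,d\eta\big)=\det\Phi_0\,\exp\!\big(\int_{\xi_0}^{\xi}\varphi_1\,d\eta\big)$, since $\mathrm{tr}\,Q=\varphi_1$; hence $\Phi$ stays nonsingular and $x_\xi,\cdots,x_{\xi^n}$ remain linearly independent, so the fully affine arc length element is well defined all along the curve. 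The curve itself is recovered by integrating the first column, $x(\xi)=x_0+\int_{\xi_0}^{\xi}x_\eta\,d\eta$ for a chosen $x_0\in\R^n$, and the companion structure of $Q$ guarantees that the $k$-th column of $\Phi$ is genuinely $x_{\xi^k}$.

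It then remains to verify that $\xi$ is truly the fully affine arc length of the constructed curve and that the prescribed functions are its curvatures. Because \eqref{fre-1} holds for $\Phi$ by construction, Cramer's rule returns exactly the prescribed $\varphi_1,\varphi_3,\cdots,\varphi_n$ as the determinant ratios recorded just before Definition~\ref{defn-cur}. Substituting these into the defining expression $F$ and using the differential-algebraic identity $F=(\alpha-\beta)\varphi_1^2+\alpha\frac{d\varphi_1}{d\xi}+(\alpha-\gamma)\lambda$ together with the choice \eqref{fre-2} of $\lambda$ yields $F=\epsilon$, which is exactly \eqref{arc-p}; hence $\xi$ is the fully affine arc length parameter and the $\varphi_i$ are its fully affine curvatures. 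I expect this normalization step to be the main obstacle: one must confirm that the particular combination of determinant ratios entering $F$ collapses, via the structure equations and \eqref{fre-2}, precisely to $\pm1$, and that the nondegeneracy of $\Phi$ established above is preserved throughout so that all ratios make sense.

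Finally, for uniqueness up to a fully affine transformation, suppose $x$ and $\tilde x$ carry the same data $\varphi_1,\varphi_3,\cdots,\varphi_n$ and $\epsilon$. Their frames $\Phi,\tilde\Phi$ solve the same linear system $\frac{d\Phi}{d\xi}=\Phi\,Q(\xi)$. Choosing the unique $A\in\GL{n,\R}$ with $\tilde\Phi(\xi_0)=A\,\Phi(\xi_0)$, the matrix $A\,\Phi$ solves the same initial value problem as $\tilde\Phi$, so by uniqueness $\tilde\Phi=A\,\Phi$ everywhere; in particular $\tilde x_\xi=A\,x_\xi$, and integrating gives $\tilde x=A\,x+b$ for some $b\in\R^n$. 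This is precisely an element of $\text{GA}(n)=\GL{n,\R}\ltimes\R^n$, which completes the argument.
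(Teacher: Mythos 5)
Your proposal is correct and follows essentially the same route as the paper, which proves the theorem by appealing to existence and uniqueness for the linear ODE system determined by the frame equation \eqref{fre-1} (in the manner of Theorem~2.5 of \cite{ks}). Your write-up simply makes explicit the steps the paper leaves implicit: the companion-matrix system $\Phi'=\Phi Q$ with $\mathrm{tr}\,Q=\varphi_1$, Abel's identity for nondegeneracy, the verification via \eqref{fre-2} that $F=\epsilon$ so that $\xi$ is indeed the fully affine arc length, and the standard frame-matching argument for uniqueness up to an element of $\text{GA}(n)$.
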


\section{Evolutions of the fully affine differential invariants}\label{sec-Evo}

    For a curve $x:p\rightarrow x(p)\in\R^2$, if $\bm{[}x_p, x_{p^2}\bm{]}\neq0$ on the whole curve, we call this curve $x$ is {\it non-degenerate}.
    A point where $F=0$ is called a {\it fully affine inflection point}. Note that in this paper, we assume all curves are non-degenerate without fully affine inflection point.

    Consider a family of smooth curves  parameterized by $C:I_1\times I_2\to\R^2$, where $t\in I_2\subset\R$ can be viewed as the time parameter and $p\in I_1\subset\R$ is a free parameter of each individual curve in the family. Then
    by Definition \ref{defn-fal}, we have
    \begin{small}
    \begin{align}\label{affine-arc}
    g(p,t):=\sqrt{\epsilon\frac{3\bm{[}C_p,~C_{p^2}\bm{]}\bm{[}C_p,~C_{p^4}\bm{]}-5\bm{[}C_p,~C_{p^3}\bm{]}^2+12\bm{[}C_p,~C_{p^2}\bm{]}\bm{[}C_{p^2},~C_{p^3}\bm{]}}{\bm{[}C_p,~C_{p^2}\bm{]}^2}},
    \end{align}
    \end{small}%
    and the fully affine arc length along the curve is given by
    \begin{equation*}
    \xi=\int^p_0g(p,t) dp,
    \end{equation*}
    and we may use either $\{p,t\}$ or $\{\xi,t\}$ as coordinates of a point on the curve.
    According to Definition \ref{defn-cur}, the fully affine differential invariants $\varphi$ and $\lambda$ are given by
    \begin{equation}\label{affine-curv}
            \varphi(\xi,t)=-\frac{\bm{[}C_{\xi},~C_{\xi^3}\bm{]}}{\bm{[}C_\xi,~C_{\xi^2}\bm{]}} ,\qquad \lambda(\xi,t)=-\frac{\bm{[}C_{\xi^3},~C_{\xi^2}\bm{]}}{\bm{[}C_\xi,~C_{\xi^2}\bm{]}},
    \end{equation}
    where
    \begin{equation}\label{lam_exp}
    \lambda=\frac{2\varphi^2+3\varphi_{\xi}+\epsilon}{9},
    \end{equation}
    and $\varphi$ is called  fully affine curvature in $\R^2$.
    According to \eqref{fre-1}, we have the following equations,
    \begin{small}
    \begin{align}\label{c3xi}
    \begin{split}
    C_{\xi^3}&=-\lambda C_\xi-\varphi C_{\xi^2},\\
    C_{\xi^4}&=(\varphi\lambda - \lambda_\xi)C_\xi+(\varphi^2 - \lambda -\varphi_\xi)C_{\xi^2}.
    \end{split}
    \end{align}
    \end{small}%
    Note that in the subsequent calculations, \eqref{c3xi} shall be repeatedly applied to remove the higher derivatives $C_{\xi^i},\; i\geq 3$.
    Assume that the curve $C(p,t)$ evolves according to the curve flow
    \begin{small}
    \begin{equation}\label{curve-motion}
    \frac{\partial C}{\partial t}=WC_{\xi}+UC_{\xi^2},
    \end{equation}
    \end{small}%
    and the motion is said to be fully affine invariant if $\{W,U\}$ depend only on local values of $\{\varphi\}$ and its $\xi$ derivatives, that is , $W$ and $U$ are fully affine invariants.

 \subsection{The evolutions of arc length and curvature}
    The evolution in time of the other variables is determined by requiring
    \begin{align}\label{req-eq}
    \begin{split}
    \frac{\partial}{\partial t}\frac{\partial}{\partial p}&=\frac{\partial}{\partial p}\frac{\partial}{\partial t},\\
    \frac{\partial}{\partial p}&=g\frac{\partial}{\partial \xi},\\
    \frac{\partial}{\partial t}\frac{\partial }{\partial \xi}
                    &=\frac{\partial}{\partial t}\left(\frac{1}{g}\frac{\partial}{\partial p}\right)
                    =-\frac{g_t}{g}\frac{\partial}{\partial \xi}+\frac{\partial }{\partial \xi}\frac{\partial}{\partial t}.
    \end{split}
    \end{align}
    Let us compute the fully affine metric evolution. According to \eqref{affine-arc}, we form the expression
        \begin{equation}\label{gt_EFG}
          \epsilon\frac{\partial g^2}{\partial t}=3E-10F+12G,
        \end{equation}
    where
    \begin{small}
    \begin{align}\label{eq-EFG}
    \begin{split}
      &E=\frac{\Big(\bm{[}C_{pt},~C_{p^4}\bm{]}+\bm{[}C_p,~C_{p^4t}\bm{]}\Big)\bm{[}C_p,~C_{p^2}\bm{]}-\bm{[}C_p,~C_{p^4}\bm{]}\Big(\bm{[}C_{pt},~C_{p^2}\bm{]}+\bm{[}C_p,~C_{p^2t}\bm{]}\Big)}{\bm{[}C_p,~C_{p^2}\bm{]}^2},\\
      &F=\frac{\bm{[}C_p,~C_{p^3}\bm{]}}{\bm{[}C_p,~C_{p^2}\bm{]}}\times\\
              &\;\qquad\frac{\Big(\bm{[}C_{pt},~C_{p^3}\bm{]}+\bm{[}C_p,~C_{p^3t}\bm{]}\Big)\bm{[}C_p,~C_{p^2}\bm{]}-\bm{[}C_p,~C_{p^3}\bm{]}\Big(\bm{[}C_{pt},~C_{p^2}\bm{]}+\bm{[}C_p,~C_{p^2t}\bm{]}\Big)}{\bm{[}C_p,~C_{p^2}\bm{]}^2},\\
      &G=\frac{\Big(\bm{[}C_{p^2t},~C_{p^3}\bm{]}+\bm{[}C_{p^2},~C_{p^3t}\bm{]}\Big)\bm{[}C_p,~C_{p^2}\bm{]}-\bm{[}C_{p^2},~C_{p^3}\bm{]}\Big(\bm{[}C_{pt},~C_{p^2}\bm{]}+\bm{[}C_p,~C_{p^2t}\bm{]}\Big)}{\bm{[}C_p,~C_{p^2}\bm{]}^2}.
    \end{split}
    \end{align}
    \end{small}%
    In fact, by \eqref{c3xi} and \eqref{req-eq}, one easily verifies that
    \begin{small}
    \begin{align*}
    C_p&=\;gC_\xi,\quad
    C_{p^2}=\;gg_\xi C_\xi+g^2C_{\xi^2},\\
    C_{p^3}&=\;\Big(- g^3\lambda+gg^2_\xi  + g^2g_{\xi^2}\Big)C_\xi+\Big(-\varphi g^3+3g^2g_\xi\Big)C_{\xi^2},\\
    C_{p^4}&=\;\Big(g^4(\varphi\lambda-\lambda_\xi)+g^3(g_{\xi^3}- 6\lambda g_\xi)+4g^2g_\xi g_{\xi^2}+gg^3_\xi\Big)C_\xi\nonumber\\
                &\qquad       \qquad\qquad+\Big(g^4(\varphi^2-\lambda-\varphi_\xi)+g^3(4g_{\xi^2} - 6\varphi g_\xi)+7g^2g^2_\xi\Big)C_{\xi^2}.
    \end{align*}
    \end{small}%
    Recalling \eqref{req-eq}, we find, in view of \eqref{c3xi} and \eqref{curve-motion}
    \begin{small}
    \begin{align*}
    C_{pt}=\;&(WC_{\xi}+UC_{\xi^2})_p\nonumber\\
                   =\;&g(W_{\xi}C_{\xi}+WC_{\xi^2}+U_{\xi}C_{\xi^2}+UC_{\xi^3})\nonumber\\
                   =\;&g\Big(W_{\xi}-U\lambda\Big)C_{\xi}+g\Big(W+U_{\xi}-U\varphi\Big)C_{\xi^2},
    \end{align*}
    \begin{align*}
    C_{p^2t}=\;&\Big(g^2(W_{\xi^2} - U\lambda_{\xi} - 2\lambda U_{\xi} - W\lambda+ U\varphi\lambda) + gg_{\xi}(W_{\xi}  - U\lambda)\Big)C_{\xi}\nonumber\\
                     &\quad+\Big(g^2\left(2W_{\xi} + U_{\xi^2} - 2\varphi U_{\xi}+ U(\varphi^2- \varphi_{\xi}  - \lambda) - W\varphi\right) \nonumber\\
                     &\qquad\qquad\qquad\qquad\qquad\qquad\qquad+ gg_{\xi}(U_{\xi}  + W - U\varphi)\Big)C_{\xi^2},
    \end{align*}
    \begin{align*}
    C_{p^3t}=\;&\bigg(g^3\Big(W_{\xi^3} - 3\lambda W_{\xi}+ W(\varphi\lambda- \lambda_{\xi})  + U(\lambda^2- \varphi^2\lambda + \varphi\lambda_{\xi}+ 2\lambda\varphi_{\xi}- \lambda_{\xi^2})    \nonumber\\
                       & \qquad\quad + 3 U_{\xi}(\varphi\lambda- \lambda_{\xi})- 3\lambda U_{\xi^2} \Big)\nonumber\\
                       &\qquad+ g^2\Big(W_{\xi}g_{\xi^2} + 3g_{\xi}W_{\xi^2} + (3\varphi\lambda g_{\xi}- \lambda g_{\xi^2} - 3g_{\xi}\lambda_{\xi})U- 3(2 U_{\xi}+ W)\lambda g_{\xi}  \Big)\nonumber\\
                       &\qquad + gg_{\xi}^2\Big(W_{\xi} - U\lambda \Big) \bigg)C_{\xi}\nonumber\\
                       &\quad+\bigg(g^3\Big(U_{\xi^3} - 3\varphi U_{\xi^2}+ 3W_{\xi^2}    - 3\varphi W_{\xi} + (3U_{\xi}+W)(\varphi^2- \lambda - \varphi_{\xi})\nonumber\\
                       & \qquad\qquad\quad   + U(2\varphi\lambda + 3\varphi \varphi_{\xi}- 2\lambda_{\xi}- \varphi^3- \varphi_{\xi^2})\Big)  \nonumber\\
                       &\qquad\quad + g^2\Big(3g_{\xi}U_{\xi^2} +6W_{\xi}g_{\xi} + U_{\xi}(g_{\xi^2}- 6\varphi g_{\xi}) \nonumber\\
                       & \qquad\qquad\quad\quad+ U(3\varphi^2g_{\xi} - \varphi g_{\xi^2} - 3g_{\xi}\varphi_{\xi}- 3\lambda g_{\xi})+  W(g_{\xi^2} - 3\varphi g_{\xi})\Big)  \nonumber\\
                       & \qquad\quad  + gg_{\xi}^2\Big(W +  U_{\xi} - U\varphi \Big)\bigg)C_{\xi^2},
    \end{align*}
    \begin{align*}
    C_{p^4t}&=\bigg(
                         g^4\Big(W_{\xi^4}- 4\lambda U_{\xi^3} - 6\lambda W_{\xi^2}+ 6U_{\xi^2}(\varphi\lambda - \lambda_{\xi}) \nonumber\\
                          &\qquad\qquad+ 4U_{\xi}(\lambda^2- \varphi^2\lambda + \varphi \lambda_{\xi} + 2\lambda \varphi_{\xi}-\lambda_{\xi^2}) \nonumber\\
                         &\qquad\qquad+U(\varphi^3\lambda- \varphi^2\lambda_{\xi}- 2\varphi\lambda^2- 5\varphi\lambda\varphi_{\xi}\nonumber\\
                         &\qquad\qquad\qquad\qquad+ 3\varphi_{\xi}\lambda_{\xi} + 4\lambda\lambda_{\xi}+ \varphi\lambda_{\xi^2} + 3\lambda\varphi_{\xi^2}  - \lambda_{\xi^3} )\nonumber\\
                         &\qquad\qquad + 4W_{\xi}(\varphi\lambda - \lambda_{\xi})+ W(\lambda^2- \varphi^2\lambda + \varphi\lambda_{\xi} + 2\lambda\varphi_{\xi} - \lambda_{\xi^2}) \Big) \nonumber\\
                        &\qquad+g^3\Big(6g_{\xi}W_{\xi^3}-18g_{\xi}\lambda U_{\xi^2}+4g_{\xi^2}W_{\xi^2}+2U_{\xi}(9g_{\xi}\lambda\varphi -9g_{\xi}\lambda_{\xi} -4g_{\xi^2}\lambda)\nonumber\\
                        &\qquad\qquad +U(6g_{\xi}\lambda^2-6g_{\xi}\lambda\varphi^2+12g_{\xi}\lambda\varphi_{\xi}\nonumber\\
                        &\qquad\qquad\qquad\qquad+6g_{\xi}\lambda_{\xi}\varphi+4g_{\xi^2}\lambda\varphi-g_{\xi^3}\lambda
                        -6g_{\xi}\lambda_{\xi^2}-4g_{\xi^2}\lambda_{\xi})\nonumber\\
                        &\qquad\qquad+W_{\xi}(g_{\xi^3}-18g_{\xi}\lambda)+W(6g_{\xi}\lambda\varphi-6g_{\xi}\lambda_{\xi}-4g_{\xi^2}\lambda)
                        \Big)\nonumber\\
                        &\qquad+g^2\Big(7g_{\xi}^2W_{\xi^2}+4g_{\xi}g_{\xi^2}W_{\xi}-7g_{\xi}^2\lambda W\nonumber\\
                        &\qquad\qquad-14g_{\xi}^2\lambda U_{\xi}+U(7g_{\xi}^2\lambda\varphi-7g_{\xi}^2\lambda_{\xi}-4g_{\xi}g_{\xi^2}\lambda)
                        \Big)\nonumber\\
                        &\qquad+gg_{\xi}^3\Big(W_{\xi}-\lambda U\Big)
                        \bigg)C_{\xi}\nonumber\\
                        &\quad+\bigg(
                        g^4\Big(U_{\xi^4}+4W_{\xi^3}-4\varphi U_{\xi^3}-6\varphi W_{\xi^2}+6U_{\xi^2}(\varphi^2-\lambda-\varphi_{\xi})\nonumber\\
                        &\qquad\qquad+4W_{\xi}(\varphi^2-\lambda-\varphi_{\xi})+W(2\lambda\varphi-\varphi^3+3\varphi\varphi_{\xi}-2\lambda_{\xi}-\varphi_{\xi^2})\nonumber\\
                        &\qquad\qquad+4U_{\xi}(2\lambda\varphi-\varphi^3+3\varphi\varphi_{\xi}-2\lambda_{\xi}-\varphi_{\xi^2})\nonumber\\
                        &\qquad\qquad+U(\varphi^4-3\lambda\varphi^2-6\varphi^2\varphi_{\xi}+\lambda^2\nonumber\\
                        &\qquad\qquad\qquad\qquad+4\lambda\varphi_{\xi}+5\lambda_{\xi}\varphi+4\varphi\varphi_{\xi^2}+3\varphi_{\xi}^2-3\lambda_{\xi^2}-\varphi_{\xi^3})\Big)\nonumber\\
                        &\qquad+g^3\Big(6g_{\xi}U_{\xi^3}+18g_{\xi}W_{\xi^2}+2(2g_{\xi^2}-9g_{\xi}\varphi)U_{\xi^2}+2W_{\xi}(4g_{\xi^2}-9g_{\xi}\varphi)\nonumber\\
                        &\qquad\qquad+U_{\xi}(18g_{\xi}\varphi^2-8g_{\xi^2}\varphi-18g_{\xi}\lambda-18g_{\xi}\varphi_{\xi}+g_{\xi^3})\nonumber\\
                        &\qquad\qquad+W(6g_{\xi}(\varphi^2-\lambda-\varphi_{\xi})-4g_{\xi^2}\varphi+g_{\xi^3})+U\big(4g_{\xi^2}(\varphi^2-\lambda-\varphi_{\xi})\nonumber\\
                        &\qquad\qquad+6g_{\xi}(2\lambda\varphi-\varphi^3+3\varphi_{\xi}\varphi-2\lambda_{\xi}-\varphi_{\xi^2})-g_{\xi^3}\varphi\big)\Big)\nonumber\\
                        &\qquad+g^2\Big(7g_{\xi}^2U_{\xi^2}+14g_{\xi}^2W_{\xi}+W(4g_{\xi}g_{\xi^2}-7g_{\xi}^2\varphi)+2U_{\xi}(2g_{\xi}g_{\xi^2}-7g_{\xi}^2\varphi)\nonumber\\
                        &\qquad\qquad+U(7g_{\xi}^2\varphi^2-7g_{\xi}^2\varphi_{\xi}-7g_{\xi}^2\lambda-4g_{\xi}g_{\xi^2}\varphi)\Big)\nonumber\\
                        &\qquad+gg_{\xi}^3\Big(W+U_{\xi}-U\varphi\Big)\bigg)C_{\xi^2}.
    \end{align*}
    \end{small}%
    Then from above equations, a direct computation yields
    \begin{small}
    \begin{align*}
    \frac{\bm{[}C_{pt}, C_{p^4}\bm{]}}{\bm{[}C_{\xi},C_{\xi^2}\bm{]}}=\;&
    g^5\Big(U(\lambda^2+\varphi_{\xi}\lambda-\lambda_{\xi}\varphi)+U_{\xi}(\lambda_{\xi}-\varphi\lambda)\nonumber\\
    &\qquad\qquad\qquad+W_{\xi}(\varphi^2-\lambda-\varphi_{\xi})+W(\lambda_{\xi}-\varphi\lambda)\Big)\nonumber\\
    &\;+g^4\Big(6g_{\xi}(W\lambda+U_{\xi}\lambda-W_{\xi}\varphi)+4g_{\xi^2}(W_{\xi}-U\lambda)\nonumber\\
    &\qquad\qquad\qquad+g_{\xi^3}(U\varphi -W-U_{\xi})\Big)\nonumber\\
    &\; +g^3\Big(7g_{\xi}^2(W_{\xi}-U\lambda) +4g_{\xi^2}g_{\xi}(U\varphi -W-4U_{\xi})\Big)\nonumber\\
    &\; +g^2g_{\xi}^3\Big(U\varphi-W-U_{\xi}\Big),
    \end{align*}
    and
    \begin{align*}
    \frac{\bm{[}C_{p},C_{p^4t}\bm{]}}{\bm{[}C_{\xi},C_{\xi^2}\bm{]}}=\;&
    g^5\Big(U_{\xi^4}+4W_{\xi^3}-4\varphi U_{\xi^3}-6\varphi W_{\xi^2}+6U_{\xi^2}(\varphi^2-\lambda-\varphi_{\xi})\nonumber\\
                        &\qquad+4W_{\xi}(\varphi^2-\lambda-\varphi_{\xi})+W(2\lambda\varphi-\varphi^3+3\varphi\varphi_{\xi}-2\lambda_{\xi}-\varphi_{\xi^2})\nonumber\\
                        &\qquad+4U_{\xi}(2\lambda\varphi-\varphi^3+3\varphi\varphi_{\xi}-2\lambda_{\xi}-\varphi_{\xi^2})\nonumber\\
                        &\qquad+U(\varphi^4-3\lambda\varphi^2-6\varphi^2\varphi_{\xi}+\lambda^2\nonumber\\
                        &\qquad\qquad\qquad\qquad+4\lambda\varphi_{\xi}+5\lambda_{\xi}\varphi+4\varphi\varphi_{\xi^2}+3\varphi_{\xi}^2-3\lambda_{\xi^2}-\varphi_{\xi^3})\Big)\nonumber\\
                        &+g^4\Big(6g_{\xi}U_{\xi^3}+18g_{\xi}W_{\xi^2}+2(2g_{\xi^2}-9g_{\xi}\varphi)U_{\xi^2}+2W_{\xi}(4g_{\xi^2}-9g_{\xi}\varphi)\nonumber\\
                        &\qquad+U_{\xi}(18g_{\xi}\varphi^2-8g_{\xi^2}\varphi-18g_{\xi}\lambda-18g_{\xi}\varphi_{\xi}+g_{\xi^3})\nonumber\\
                        &\qquad+W(6g_{\xi}(\varphi^2-\lambda-\varphi_{\xi})-4g_{\xi^2}\varphi+g_{\xi^3})\nonumber\\
                        &\qquad+U\big(4g_{\xi^2}(\varphi^2-\lambda-\varphi_{\xi})-g_{\xi^3}\varphi\big)\nonumber\\
                        &\qquad\qquad\qquad\qquad+6g_{\xi}(2\lambda\varphi-\varphi^3+3\varphi_{\xi}\varphi-2\lambda_{\xi}-\varphi_{\xi^2})\Big)\nonumber\\
                        &+g^3\Big(7g_{\xi}^2U_{\xi^2}+14g_{\xi}^2W_{\xi}+W(4g_{\xi}g_{\xi^2}-7g_{\xi}^2\varphi)+2U_{\xi}(2g_{\xi}g_{\xi^2}-7g_{\xi}^2\varphi)\nonumber\\
                        &\qquad\qquad+U(7g_{\xi}^2\varphi^2-7g_{\xi}^2\varphi_{\xi}-7g_{\xi}^2\lambda-4g_{\xi}g_{\xi^2}\varphi)\Big)\nonumber\\
                        &+g^2g_{\xi}^3\Big(W+U_{\xi}-U\varphi\Big).
    \end{align*}
    \end{small}%
    Furthermore, we have
    \begin{align*}
    \frac{\bm{[}C_{p},C_{p^4}\bm{]}}{\bm{[}C_{\xi},C_{\xi^2}\bm{]}}
        &=g^5(\varphi^2-\lambda-\varphi_{\xi})+2g^4(2g_{\xi^2}-3g_{\xi}\varphi)+7g^3g_{\xi}^2,\\
    \frac{\bm{[}C_{pt},C_{p^2}\bm{]}}{\bm{[}C_{\xi},C_{\xi^2}\bm{]}}
        &=g^3(W_{\xi}-U\lambda)+g^2g_{\xi}(U\varphi-W-U_{\xi}),\\
    \frac{\bm{[}C_{p},C_{p^2t}\bm{]}}{\bm{[}C_{\xi},C_{\xi^2}\bm{]}}
        &=g^3\big(U(\varphi^2-\lambda-\varphi_{\xi})-\varphi(W+2U_{\xi})+2W_{\xi}+U_{\xi^2}\big) \nonumber\\
        &\qquad+g^2g_{\xi}(W+U_{\xi}-U\varphi).
    \end{align*}
    By $\bm{[}C_p,C_{p^2}\bm{]}=g^3\bm{[}C_{\xi},C_{\xi^2}\bm{]}$ and  \eqref{eq-EFG}, one can see
    \begin{small}
    \begin{align}\label{E-r}
    E=\;&g^2\bigg(U_{\xi^4}+4W_{\xi^3}-4\varphi U_{\xi^3}-6\varphi W_{\xi^2}+(5U_{\xi^2}+2W_{\xi})(\varphi^2-\varphi_{\xi}-\lambda)\nonumber\\
    &\quad\qquad\qquad+W(2\varphi_{\xi}\varphi-\varphi_{\xi^2}-\lambda_{\xi})\nonumber\\
    &\quad\qquad\qquad+U(-4\varphi_{\xi}\varphi^2+2\varphi_{\xi}^2+2\varphi_{\xi}\lambda+4\varphi\varphi_{\xi^2}+4\varphi\lambda_{\xi}-\varphi_{\xi^3}-3\lambda_{\xi^2})\nonumber\\
    &\quad\qquad\qquad+U_{\xi}(5\varphi\lambda-2\varphi^3+10\varphi_{\xi}\varphi-4\varphi_{\xi^2}-7\lambda_{\xi})\bigg)\nonumber\\
    &+6gg_{\xi}\bigg(U_{\xi^3}+3W_{\xi^2}-2\varphi U_{\xi^2}-\varphi W_{\xi}-\varphi_{\xi} W\nonumber\\
    &\;\quad\qquad\qquad+U_{\xi}(\varphi^2-3\varphi_{\xi}-2\lambda)+U(2\varphi_{\xi}\varphi-\varphi_{\xi^2}-2\lambda_{\xi})\bigg).
    \end{align}
    \end{small}%
    At the same time, it is not hard to verify
    \begin{align*}
    \frac{\bm{[}C_p,C_{p^3}\bm{]}}{\bm{[}C_p,C_{p^2}\bm{]}}=\;&3g_{\xi} - g\varphi,\\
    \frac{\bm{[}C_{pt},C_{p^3}\bm{]}}{\bm{[}C_{\xi},C_{\xi^2}\bm{]}}
    =\;&
    g^4\big(\lambda (W+U_{\xi})-W_{\xi}\varphi\big)+ g^2g_{\xi}^2(\varphi U-U_{\xi}-W) \nonumber\\
    &\quad+g^3\big(3g_{\xi}(W_{\xi}-\lambda U)+(U\varphi-U_{\xi}-W)g_{\xi^2}\big),\\
    \end{align*}
    \begin{align*}
    \frac{\bm{[}C_{p},C_{p^3t}\bm{]}}{\bm{[}C_{\xi},C_{\xi^2}\bm{]}}=\;
    &g^4\bigg(U_{\xi^3}+3W_{\xi^2}-3\varphi U_{\xi^2}+(W+3U_{\xi})(\varphi^2-\lambda-\varphi_{\xi})\nonumber\\
    &\qquad -3\varphi W_{\xi}+U(2\varphi\lambda-\varphi^3+3\varphi\varphi_{\xi}-\varphi_{\xi^2}-2\lambda_{\xi})\bigg)\nonumber\\
    &\;+g^3\bigg(3g_{\xi}(U_{\xi^2}-2U_{\xi}\varphi+2W_{\xi}-W\varphi)+(U_{\xi}+W)g_{\xi^2}\nonumber\\
    &\qquad\qquad\qquad +U\big(3g_{\xi}(\varphi^2-\lambda-\varphi_{\xi})-\varphi g_{\xi^2}\big)\bigg)\nonumber\\
    &\;+g^2g_{\xi}^2\bigg(U_{\xi}+W-U\varphi\bigg).
    \end{align*}
    Thus, it follows by \eqref{eq-EFG}
    \begin{small}
    \begin{align}\label{F-r}
    F\;=&g(g\varphi - 3g_{\xi})\bigg(-U_{\xi^3}+2\varphi U_{\xi^2}-3W_{\xi^2}+\varphi W_{\xi}+\varphi_{\xi} W\nonumber\\
    &\qquad\qquad\qquad+U_{\xi}(2\lambda-\varphi^2+3\varphi_{\xi})+U(2\lambda_{\xi}-2\varphi\varphi_{\xi}+\varphi_{\xi^2})\bigg).
    \end{align}
    \end{small}%
    Again, by
    \begin{small}
    \begin{align*}
     \frac{\bm{[}C_{p^2t},C_{p^3}\bm{]}}{\bm{[}C_{\xi},C_{\xi^2}\bm{]}}=\;
     &g^5\bigg(\lambda U_{\xi^2}-\varphi W_{\xi^2}+2\lambda W_{\xi}+U(\lambda_{\xi}\varphi-\lambda^2-\lambda\varphi_{\xi})\bigg)\nonumber\\
     &+g^4\bigg(3g_{\xi}W_{\xi^2}-g_{\xi^2}U_{\xi^2}+U_{\xi}(2\varphi g_{\xi^2}-5\lambda g_{\xi})+W_{\xi}(-g_{\xi}\varphi-2g_{\xi^2})\nonumber\\
     &\qquad+U\big(3g_{\xi}(\lambda \varphi-\lambda_{\xi})+g_{\xi^2}(\lambda +\varphi_{\xi}-\varphi^2)\big)+W(\varphi g_{\xi^2}-2\lambda g_{\xi})\bigg)\nonumber\\
     &+g^3g_{\xi}\bigg(g_{\xi}(W_{\xi}-U_{\xi^2})+U_{\xi}(2g_{\xi}\varphi-g_{\xi^2})+W(g_{\xi}\varphi-g_{\xi^2})\nonumber\\
     &\qquad+U\big(g_{\xi}(\varphi_{\xi}-\varphi^2-2\lambda)+\varphi g_{\xi^2}\big)\bigg)\nonumber\\
     &+g^2g_{\xi}^3\bigg(U\varphi-U_{\xi}-W\bigg),
    \end{align*}
    \begin{align*}
     \frac{\bm{[}C_{p^2},C_{p^3t}\bm{]}}{\bm{[}C_{\xi},C_{\xi^2}\bm{]}}=\;
     &g^5\bigg(-W_{\xi^3}+3\lambda U_{\xi^2}+3\lambda W_{\xi}+U(\varphi^2\lambda-\varphi\lambda_{\xi}-\lambda^2-2\lambda\varphi_{\xi}+\lambda_{\xi^2})\nonumber\\
     &\quad+(3U_{\xi}+W)(\lambda_{\xi}-\varphi\lambda)\bigg)\nonumber\\
     &+g^4\bigg(g_{\xi}(U_{\xi^3}-3\varphi U_{\xi^2})+W_{\xi}(-3g_{\xi}\varphi-g_{\xi^2})+3U_{\xi}g_{\xi}(\varphi^2+\lambda-\varphi_{\xi})\nonumber\\
     &\qquad+Wg_{\xi}(\varphi^2+2\lambda-\varphi_{\xi})\nonumber\\
     &\qquad+U(-g_{\xi}\varphi^3-\lambda g_{\xi}\varphi+3g_{\xi}\varphi\varphi_{\xi}+g_{\xi}\lambda_{\xi}-g_{\xi}\varphi_{\xi^2}+\lambda g_{\xi^2})\bigg)\nonumber\\
     &+g^3g_{\xi}\bigg(3g_{\xi}U_{\xi^2}+U_{\xi}(-6g_{\xi}\varphi+g_{\xi^2})+5g_{\xi}W_{\xi}+W(-3g_{\xi}\varphi+g_{\xi^2})\nonumber\\
     &\qquad+U\big(g_{\xi}(3\varphi^2-2\lambda-3\varphi_{\xi})-\varphi g_{\xi^2}\big)\bigg)\nonumber\\
     &+g^2g_{\xi}^3\bigg(W+U_{\xi}-U\varphi\bigg),
    \end{align*}
    \end{small}%
    and
    \begin{align*}
     \frac{\bm{[}C_{p^2},C_{p^3}\bm{]}}{\bm{[}C_{\xi},C_{\xi^2}\bm{]}}=g^5\lambda +g^4(-g_{\xi}\varphi-g_{\xi^2})+2g_{\xi}^2g^3,
     \end{align*}
    in view of \eqref{eq-EFG}, one can obtain
    \begin{small}
    \begin{align}\label{G-r}
    G=\;&gg_{\xi}\bigg(U_{\xi^3}+3W_{\xi^2}-\varphi (2U_{\xi^2}+ W_{\xi})\nonumber\\
    &\qquad\qquad\qquad+U_{\xi}(\varphi^2-2\lambda -3\varphi_{\xi})-\varphi_{\xi}W+U(2\varphi\varphi_{\xi}-2\lambda_{\xi}-\varphi_{\xi^2})\bigg)\nonumber\\
    &\;+g^2\bigg(-W_{\xi^3}+3\lambda U_{\xi^2}-\varphi W_{\xi^2}\nonumber\\
    &\qquad\qquad\qquad+U_{\xi}(3\lambda_{\xi}-\varphi\lambda)+2\lambda W_{\xi}+\lambda_{\xi}W+U(\lambda_{\xi^2}-2\lambda\varphi_{\xi})\bigg).
    \end{align}
    \end{small}%
    Hence, combining of \eqref{gt_EFG}, \eqref{E-r}, \eqref{F-r} and \eqref{G-r} leads to
    \begin{small}
    \begin{align*}
    2\epsilon g\frac{\partial g}{\partial t}=\;&g^2\bigg(3U_{\xi^4}-2\varphi U_{\xi^3}+U_{\xi^2}(21\lambda-5\varphi^2-15\varphi_{\xi})\nonumber\\
    &\qquad+U_{\xi}(4\varphi^3-17\varphi\lambda+15\lambda_{\xi}-12\varphi_{\xi^2})\nonumber\\
    &\qquad+2W_{\xi}(9\lambda-2\varphi^2-3\varphi_{\xi})+W(9\lambda_{\xi}-4\varphi\varphi_{\xi}-3\varphi_{\xi^2})\nonumber\\
    &\qquad+U(8\varphi^2\varphi_{\xi}-18\lambda\varphi_{\xi}-8\varphi\lambda_{\xi}+2\varphi\varphi_{\xi^2}+6\varphi_{\xi}^2+3\lambda_{\xi^2}-3\varphi_{\xi^3})\bigg),
    \end{align*}
    \end{small}%
    or, substituting \eqref{lam_exp} into this equation, equivalently,
    \begin{small}
    \begin{align}\label{gtog}
    \epsilon\frac{\partial g}{\partial t}=\frac{g}{18}&\bigg(27U_{\xi^4}-18\varphi U_{\xi^3}+U_{\xi^2}(21\epsilon-3\varphi^2-72\varphi_{\xi})\nonumber\\
    &\;\;+U_{\xi}(2\varphi^3+9\varphi\varphi_{\xi}-17\epsilon\varphi-63\varphi_{\xi^2})\nonumber\\
    &\;\;+18\epsilon W_{\xi}+U(4\varphi^2\varphi_{\xi}+6\varphi\varphi_{\xi^2}+12\varphi_{\xi}^2-18\epsilon\varphi_{\xi}-18\varphi_{\xi^3})\bigg).
    \end{align}
    \end{small}%
    To make further progress, we now come to a crucial computation, namely the  fully affine curvature evolution. By \eqref{affine-curv}, we get
    \begin{small}
    \begin{align}\label{varphi-t-origin}
          \begin{aligned}
            \frac{\partial \varphi}{\partial t}&          =-\frac{\partial}{\partial t}\frac{\bm{[}C_{\xi},C_{\xi^3}\bm{]}}{\bm{[}C_{\xi},C_{\xi^2}\bm{]}}\\
              &=-\frac{
                        \left(\bm{[}C_{\xi t},C_{\xi^3}\bm{]}+\bm{[}C_{\xi},C_{\xi^3 t}\bm{]}\right)\bm{[}C_{\xi},C_{\xi^2}\bm{]}
                        -\bm{[}C_{\xi},C_{\xi^3}\bm{]}\left(\bm{[}C_{\xi t},C_{\xi^2}\bm{]}+\bm{[}C_{\xi},C_{\xi^2 t}\bm{]}\right)
                    }{
                        \bm{[}C_{\xi},C_{\xi^2}\bm{]}^2
                    }.
          \end{aligned}
    \end{align}
    \end{small}%
    Again recalling \eqref{req-eq},
    we have, in view of \eqref{c3xi} and \eqref{curve-motion}
    \begin{small}
    \begin{align*}
    C_{\xi t}&=C_{t\xi}-\frac{g_t}{g}C_{\xi}\nonumber\\
                   &=(WC_{\xi}+UC_{\xi^2})_{\xi}-\frac{g_t}{g}C_{\xi}\nonumber\\
                   &=(W_{\xi}+U\lambda-\frac{g_t}{g})C_{\xi}+(W+U_{\xi}-U\varphi)C_{\xi^2},\\
    C_{\xi^2 t}&=C_{\xi t\xi}-\frac{g_t}{g}C_{\xi^2}\nonumber\\
                    &=\Big(-\big(\frac{g_t}{g}\big)_{\xi} - W\lambda + U(\varphi\lambda- \lambda_{\xi}) - 2\lambda U_{\xi} + W_{\xi^2} \Big)C_{\xi}\nonumber\\
                    &\qquad+\Big(-2\frac{g_t}{g} + U(\varphi^2 - \lambda - \varphi_{\xi}) - (W+ 2 U_{\xi})\varphi   + 2W_{\xi}   + U_{\xi^2}\Big)C_{\xi^2},\\
    C_{\xi^3 t}&=C_{\xi^2 t\xi}-\frac{g_t}{g}C_{\xi^3}\nonumber\\
                    &=\Big(W_{\xi^3}- 3\lambda U_{\xi^2}+ 3U_{\xi}(\varphi\lambda  - \lambda_{\xi}) +U(\lambda^2 - \lambda_{\xi^2}- \varphi^2\lambda + \varphi \lambda_{\xi} + 2\lambda \varphi_{\xi})    \nonumber\\
                    &\qquad\qquad    - 3\lambda W_{\xi}+ W(\varphi\lambda- \lambda_{\xi})  + 3\frac{g_t}{g}\lambda-\big(\frac{g_t}{g}\big)_{\xi^2}\Big)C_{\xi}\nonumber\\
                    &\qquad+\Big(U_{\xi^3}- 3\varphi U_{\xi^2}+3U_{\xi}(\varphi^2 - \varphi_{\xi}- \lambda) + U(2\varphi\lambda- \varphi^3 - 2\lambda_{\xi}+ 3\varphi\varphi_{\xi}- \varphi_{\xi^2}  )   \nonumber\\
                    &\qquad \qquad   + 3W_{\xi^2}- 3\varphi W_{\xi}+ W(\varphi^2- \lambda - \varphi_{\xi})   + 3\frac{g_t}{g}\varphi -3\big(\frac{g_t}{g}\big)_{\xi}\Big)C_{\xi^2}.
    \end{align*}
    \end{small}%
    Thus, by \eqref{c3xi}, \eqref{varphi-t-origin} can be represented as
    \begin{small}
    \begin{align*}
    \frac{\partial\varphi}{\partial t}=\;&
    \frac{9}{2}\epsilon U_{\xi^5}- \frac{9}{2}\epsilon\varphi U_{\xi^4}+ U_{\xi^3}\Big(\frac{63}{2}\epsilon\lambda- \frac{13}{2}\epsilon\varphi^2 - \frac{51}{2}\epsilon\varphi_{\xi}-1\Big)  \nonumber\\
    &+ U_{\xi^2}\Big(\frac{17}{2}\epsilon\varphi^3+2\varphi- 36\epsilon\varphi\lambda+  54\epsilon\lambda_{\xi}- \frac{15}{2}\epsilon\varphi\varphi_{\xi}- \frac{81}{2}\epsilon \varphi_{\xi^2} \Big) \nonumber\\
    &+U_{\xi}\Big(- 2\epsilon\varphi^4- \varphi^2+ \frac{17}{2}\epsilon\varphi^2\lambda+ 3\varphi_{\xi} + 2\lambda + 9\epsilon \varphi_{\xi}^2+ 30\epsilon\varphi^2 \varphi_{\xi}- 45\epsilon\varphi \lambda_{\xi}\nonumber\\
    & \qquad\qquad\qquad\qquad- \frac{105}{2}\epsilon\lambda \varphi_{\xi}+ 9\epsilon\varphi \varphi_{\xi^2}+ 27\epsilon \lambda_{\xi^2}- \frac{45}{2}\epsilon \varphi_{\xi^3}\Big)\nonumber\\
    &+ U\Big(- 39\epsilon \varphi_{\xi}\lambda_{\xi}+ 9\epsilon \varphi\lambda\varphi_{\xi}+ 21\epsilon \varphi\varphi_{\xi}^2 - 4\epsilon \varphi^3\varphi_{\xi} +4\epsilon \varphi^2\lambda_{\xi}   + 2\lambda_{\xi}\nonumber\\
    &\qquad\qquad- 2\varphi\varphi_{\xi}+\varphi_{\xi^2}+ 21\epsilon \varphi_{\xi}\varphi_{\xi^2}- \frac{27}{2}\epsilon \varphi\lambda_{\xi^2} - 27\epsilon \lambda\varphi_{\xi^2}\nonumber\\
    &\qquad\qquad+ 11\epsilon \varphi^2\varphi_{\xi^2}+ \frac{9}{2}\epsilon \varphi\varphi_{\xi^3}+ \frac{9}{2}\epsilon \lambda_{\xi^3}- \frac{9}{2}\epsilon \varphi_{\xi^4}\Big) \nonumber\\
    & + W\Big(\varphi_{\xi}- 6\epsilon \varphi_{\xi}^2- \frac{9}{2}\epsilon \varphi\lambda_{\xi}+ 2\epsilon \varphi^2\varphi_{\xi}+ \frac{27}{2}\epsilon \lambda_{\xi^2}- \frac{9}{2}\epsilon \varphi\varphi_{\xi^2}- \frac{9}{2}\epsilon \varphi_{\xi^3}\Big)\nonumber\\
    &   + W_{\xi}\Big(2\epsilon\varphi^3+\varphi- 9\epsilon\varphi\lambda+ \frac{81}{2}\epsilon \lambda_{\xi} - 15\epsilon\varphi \varphi_{\xi}- \frac{27}{2}\epsilon \varphi_{\xi^2}\Big) \nonumber\\
     &+W_{\xi^2}\Big(27\epsilon\lambda- 3 - 6\epsilon\varphi^2  - 9\epsilon\varphi_{\xi}       \Big) .
    \end{align*}
    \end{small}%
    Substituting \eqref{lam_exp} into above equation generates
    \begin{small}
    \begin{align}\label{vaphi-evo}
    \frac{\partial\varphi}{\partial t}=\;&W\varphi_{\xi}+\frac{9}{2}\epsilon U_{\xi^5}-\frac{9}{2}\epsilon\varphi U_{\xi^4}+U_{\xi^3}\Big(\frac{\epsilon}{2}\varphi^2-15\epsilon\varphi_{\xi} +\frac{5}{2}\Big)\nonumber\\
    &+U_{\xi^2}\Big(\frac{\epsilon}{2}\varphi^3-2\varphi+\frac{9}{2}\epsilon\varphi\varphi_{\xi}-\frac{45}{2}\epsilon \varphi_{\xi^2}\Big)\nonumber\\
    &+U_{\xi}\Big(-\frac{\epsilon}{9}\varphi^4+\frac{7}{18}\varphi^2+\frac{2}{9}\epsilon+\frac{7}{6}\epsilon\varphi^2\varphi_{\xi}-\frac{13}{6}\varphi_{\xi}+\frac{7}{2}\epsilon \varphi^2_{\xi}  +6\epsilon\varphi \varphi_{\xi^2}-\frac{27}{2}\epsilon \varphi_{\xi^3}\Big)  \nonumber\\
    &+U\Big(-\frac{2}{9}\epsilon \varphi^3\varphi_{\xi}+\frac{2}{3}\epsilon \varphi\varphi^2_{\xi}-\frac{1}{9}\varphi\varphi_{\xi}+\frac{\epsilon}{3}\varphi^2\varphi_{\xi^2}\nonumber\\
    &\qquad\qquad\qquad+5\epsilon \varphi_{\xi}\varphi_{\xi^2}-\frac{4}{3}\varphi_{\xi^2}+2\epsilon \varphi\varphi_{\xi^3}
    -3\epsilon \varphi_{\xi^4}\Big).
    \end{align}
    \end{small}%
 \subsection{The evolution of higher order derivatives}
    By \eqref{gtog} and \eqref{vaphi-evo}, we can calculate the evolutions of the higher order derivatives for the curvature $\varphi$ with respect to the fully affine arc length parameter $\xi$.
    \begin{small}
    \begin{align}\label{varphi1t}
      \varphi_{\xi t}=\;&\varphi_{t\xi}-\frac{g_t}{g}\varphi_{\xi}\nonumber\\
      =\;&\varphi_{\xi^2}W+\frac{9}{2}\epsilon (U_{\xi^6}-\varphi U_{\xi^5})+U_{\xi^4}\big(\frac{\epsilon\varphi^2}{2}-21\epsilon\varphi_\xi+\frac{5}{2}\big)\nonumber\\
      &\quad+U_{\xi^3}\frac{\varphi(\epsilon\varphi^2+13\epsilon\varphi_{\xi}-4)-75\epsilon\varphi_{\xi^2}}{2}\nonumber\\
      &\quad+U_{\xi^2}\Big(-36\epsilon\varphi_{\xi^3}+\frac{21\epsilon\varphi\varphi_{\xi^2}}{2}+12\epsilon\varphi_{\xi}^2\nonumber\\
      &\qquad\qquad\qquad+(3\epsilon\varphi^2-\frac{16}{3})\varphi_{\xi}-\frac{2\epsilon\varphi^4-7\varphi^2+4\epsilon}{18}\Big)\nonumber\\
      &\quad+U_{\xi}\Big(-\frac{33}{2}\epsilon\varphi_{\xi^4}+8\epsilon\varphi\varphi_{\xi^3}\nonumber\\
      &\qquad\qquad\qquad+\frac{3\epsilon\varphi^2+43\epsilon\varphi_{\xi}-7}{2}\varphi_{\xi^2}+\frac{45\epsilon\varphi_{\xi}-14\epsilon\varphi^2+29}{18}\varphi\varphi_{\xi}\Big)\nonumber\\
      &\quad+U\Big(-3\epsilon\varphi_{\xi^5}+2\epsilon\varphi\varphi_{\xi^4}+\frac{\epsilon\varphi^2+24\epsilon\varphi_{\xi}-4}{3}\varphi_{\xi^3}\nonumber\\
      &\qquad\qquad\qquad+\frac{-2\epsilon\varphi^2+15\epsilon\varphi_{\xi}-1}{9}\varphi\varphi_{\xi^2}+5\epsilon\varphi_{\xi^2}^2-\frac{8}{9}(\epsilon\varphi^2-1)\varphi_{\xi}^2\Big).
    \end{align}
    \begin{align}\label{varphi2t}
    \varphi_{\xi^2 t}=\;&\varphi_{\xi t\xi}-\frac{g_t}{g}\varphi_{\xi^2}\nonumber\\
    =\;&\varphi_{\xi^3}W+\frac{9}{2}\epsilon(U_{\xi^7}-\varphi U_{\xi^6})+U_{\xi^5}\frac{\epsilon\varphi^2-51\varphi_{\xi}+5}{2}\nonumber\\
    &\quad+U_{\xi^4}\bigg(\frac{\epsilon\varphi^2+15\epsilon\varphi_{\xi}-4}{2}\varphi-60\epsilon\varphi_{\xi^2}\bigg)\nonumber\\
    &\quad+U_{\xi^3}\bigg(-\frac{147\epsilon\varphi_{\xi^3}}{2}+18\epsilon\varphi\varphi_{\xi^2}\nonumber\\
    &\qquad\qquad\qquad+\frac{37\epsilon\varphi_{\xi}^2}{2}+\frac{13\epsilon\varphi^2-22}{3}\varphi_{\xi}-\frac{2\epsilon\varphi^4-7\varphi^2-4\epsilon}{18}\bigg)\nonumber\\
    &\quad+U_{\xi^2}\bigg(-\frac{105}{2}\epsilon\varphi_{\xi^4}+\frac{37}{2}\epsilon\varphi_{\xi^3}\nonumber\\
    &\qquad\qquad\qquad+\frac{9\epsilon\varphi^2+120\epsilon\varphi_{\xi}-20}{2}\varphi_{\xi^2}-\frac{22\epsilon\varphi^2-147\epsilon\varphi_{\xi}-43}{18}\varphi_{\xi}\varphi\bigg)\nonumber\\
    &\quad+U_{\xi}\bigg(-\frac{39}{2}\epsilon\varphi_{\xi^5}+10\epsilon\varphi\varphi_{\xi^4}+\frac{33\epsilon\varphi^2+675\epsilon\varphi_{\xi}-87}{18}\varphi_{\xi^3}+30\epsilon\varphi_{\xi^2}^2\nonumber\\
    &\qquad\qquad\qquad+\frac{-20\epsilon\varphi^2+165\epsilon\varphi\varphi_{\xi}+44}{18}\varphi\varphi_{\xi^2}+\frac{45\epsilon\varphi_{\xi}-58\epsilon\varphi^2+45}{18}\varphi_{\xi}^2\bigg)\nonumber\\
    &\quad+U\bigg(-3\epsilon\varphi_{\xi^6}+2\epsilon\varphi\varphi_{\xi^5}+\frac{\epsilon\varphi^2+30\epsilon\varphi_{\xi}-4}{3}\varphi_{\xi^4}\nonumber\\
    &\qquad\qquad\qquad+\frac{-4\epsilon\varphi^3+21\epsilon\varphi\varphi_{\xi}+171\epsilon\varphi_{\xi^2}-\varphi}{9}\varphi_{\xi^3}\nonumber\\
    &\qquad\qquad\qquad+\frac{4}{3}\epsilon\varphi\varphi_{\xi^2}^2+\frac{3\epsilon\varphi_{\xi}-8\epsilon\varphi^2+8}{3}\varphi_{\xi}\varphi_{\xi^2}
    -\frac{16}{9}\epsilon\varphi\varphi_{\xi}^3\bigg).
    \end{align}
    \begin{align}\label{varphi3t}
    \varphi_{\xi^3 t}=\;&\varphi_{\xi^2 t\xi}-\frac{g_t}{g}\varphi_{\xi^3}\nonumber\\
    =\;&\varphi_{\xi^4}W+\frac{9}{2}\epsilon(U_{\xi^8}-\varphi U_{\xi^7})+U_{\xi^6}\frac{\epsilon\varphi^2-60\epsilon\varphi_{\xi}+5}{2}\nonumber\\
    &\;\;\;+U_{\xi^5}\frac{\varphi(\epsilon\varphi^2+17\epsilon\varphi_{\xi}-4)-171\epsilon\varphi_{\xi^2}}{2}\nonumber\\
    &\;\;\;+U_{\xi^4}\bigg(-135\epsilon\varphi_{\xi^3}+\frac{51}{2}\epsilon\varphi\varphi_{\xi^2}+26\epsilon\varphi_{\xi}^2
    +\frac{35\epsilon\varphi^2-56}{6}\varphi_{\xi}\nonumber\\
    &\qquad\qquad\qquad\qquad\qquad\qquad\qquad\qquad\qquad-\frac{2\epsilon\varphi^4-7\varphi^2-4\epsilon}{18}\bigg)\nonumber\\
    &\;\;\;+U_{\xi^3}\bigg(-126\epsilon\varphi_{\xi^4}+\frac{75}{2}\epsilon\varphi\varphi_{\xi^3}+\frac{53\epsilon\varphi^2+690\epsilon\varphi_{\xi}-104}{6}\varphi_{\xi^2}\nonumber\\
    &\qquad\qquad\qquad\qquad\qquad\qquad-\frac{10\epsilon\varphi^2-101\epsilon\varphi_{\xi}-19}{6}\varphi\varphi_{\xi}\bigg)\nonumber\\
    &\;\;\;+U_{\xi^2}\bigg(-72\epsilon\varphi_{\xi^5}+\frac{57}{2}\epsilon\varphi\varphi_{\xi^4}+\frac{39\epsilon\varphi^2-96+720\epsilon\varphi_{\xi}}{6}\varphi_{\xi^3}
    +90\epsilon\varphi_{\xi^2}^2\nonumber\\
    &\qquad\qquad\qquad+\frac{32}{3}\epsilon\varphi_{\xi}^3+\frac{-14\epsilon\varphi^2+207\epsilon\varphi_{\xi}+29}{6}\varphi\varphi_{\xi^2}+\frac{44-62\epsilon\varphi^2}{9}\varphi_{\xi}^2\bigg)\nonumber\\
    &\;\;\;+U_{\xi}\bigg(-\frac{45}{2}\epsilon\varphi_{\xi^6}+12\epsilon\varphi\varphi_{\xi^5}+\frac{13\epsilon\varphi^2+345\epsilon\varphi_{\xi}-37}{6}\varphi_{\xi^4}
    +\frac{21}{2}\epsilon\varphi\varphi_{\xi^2}^2\nonumber\\
    &\qquad\qquad\quad+\frac{-26\epsilon\varphi^3+264\epsilon\varphi\varphi_{\xi}+2160\epsilon\varphi_{\xi^2}+58\varphi}{18}\varphi_{\xi^3}\nonumber\\
    &\qquad\qquad\quad+\frac{159\epsilon\varphi_{\xi}-112\epsilon\varphi^2+91}{9}\varphi_{\xi}\varphi_{\xi^2}-\frac{74}{9}\epsilon\varphi\varphi_{\xi}^3\bigg)\nonumber\\
    &\quad+U\bigg(-3\epsilon\varphi_{\xi^7}+2\epsilon\varphi_{\xi^6}+\frac{\epsilon\varphi^2+36\epsilon\varphi_{\xi}-4}{3}\varphi_{\xi^5}
    +20\epsilon\varphi_{\xi^3}^2-\frac{32}{3}\epsilon\varphi\varphi_{\xi}^2\varphi_{\xi^2}\nonumber\\
    &\qquad\qquad\qquad+\frac{-2\epsilon\varphi^3+27\epsilon\varphi\varphi_{\xi}+261\epsilon\varphi_{\xi^2}-\varphi}{9}\varphi_{\xi^4}-\frac{16}{9}\epsilon\varphi_{\xi}^4\nonumber\\
    &\qquad\qquad\qquad+\frac{-8\epsilon\varphi^2+10\epsilon\varphi_{\xi}+8}{3}\varphi_{\xi^2}^2\nonumber\\
    &\qquad\qquad\qquad+\frac{42\epsilon\varphi\varphi_{\xi^2}+24\epsilon\varphi_{\xi}^2+32(1-\epsilon\varphi^2)\varphi_{\xi}}{9}\varphi_{\xi^3}\bigg).
    \end{align}
    \end{small}%
\section{The stability of the extremal curves on fully affine plane}\label{sec-VF}

    The   fully affine length of a curve $C:[0,1]\rightarrow\R^2$
    \begin{equation*}
      l:=\int^1_0g dp
    \end{equation*}
    can be calculated. It is of interest to consider how the functional $l$ varies when $C$ is deformed smoothly.

    Assume $C=C(\xi,t), \quad 0\leq \xi \leq 1, \quad 0\leq t<T,$  is a smooth deformation of $C_0=C(\cdot,0)$
    such that
    \begin{equation}\label{WU-flow}
    C_t=WC_\xi+UC_{\xi^2},
    \end{equation}
    where $W$ and $U$ are arbitrary smooth functions of $\xi$ and $t$. Clearly, the first variation of fully affine arc length is
    \begin{equation*}
    l'(t)=\int^1_0g_t dp.
    \end{equation*}
    By \eq{gtog}, if $W(1)=W(0)=0$, $U_{p^i}(1)=U_{p^i}(0)=0, i=1,2,3$, $U(1)=U(0)=0$,
    then the  first variation formula can be written as
    \begin{small}
    \begin{align}\label{First-VF}
    l'(t)&=\frac{\epsilon}{18}\int^1_0\bigg(27U_{\xi^4}-18\varphi U_{\xi^3}+U_{\xi^2}(21\epsilon-3\varphi^2-72\varphi_{\xi})\nonumber\\
             &\qquad\qquad\qquad\;+U_{\xi}(2\varphi^3+9\varphi\varphi_{\xi}-17\epsilon\varphi-63\varphi_{\xi^2})+18\epsilon W_{\xi}\nonumber\\
              &\qquad\qquad\qquad\;+U(4\varphi^2\varphi_{\xi}+6\varphi\varphi_{\xi^2}+12\varphi_{\xi}^2-18\epsilon\varphi_{\xi}-18\varphi_{\xi^3})\bigg)gdp\nonumber\\
         &=\frac{\epsilon}{18}\int^1_0U\bigg(18\varphi_{\xi^3}-(6\varphi_\xi^2+6\varphi\varphi_{\xi^2}+72\varphi_{\xi^3})\nonumber\\
         &\qquad\qquad\qquad\;-(6\varphi^2\varphi_{\xi}+9\varphi_{\xi}^2+9\varphi\varphi_{\xi^2}-17\epsilon\varphi_{\xi}-63\varphi_{\xi^3})\nonumber\\
         &\qquad\qquad\qquad\;+(4\varphi^2\varphi_{\xi}+6\varphi\varphi_{\xi^2}+12\varphi_{\xi}^2-18\epsilon\varphi_{\xi}-18\varphi_{\xi^3})\bigg)gdp\nonumber\\
         &=-\frac{\epsilon}{2}\int^1_0U\bigg(\varphi_{\xi^3}+\varphi\varphi_{\xi^2}+\frac{2\varphi^2+3\varphi_{\xi}+\epsilon}{9}\bigg)gdp.
   \end{align}
   \end{small}%
    Owing to the arbitrariness of function $U$, we have the following result (this result was also mentioned in \cite{ks} and \cite{mih}).
    \begin{thm}
     A plane curve  is fully affine extremal relative to the length functional if and only if
    \begin{align*}
    \varphi_{\xi^3}+\varphi\varphi_{\xi^2}+\frac{2\varphi^2+3\varphi_{\xi}+\epsilon}{9}\varphi_{\xi}=0
    \end{align*}
    holds. In particular, any curves of constant fully affine curvature are extremal.
    \end{thm}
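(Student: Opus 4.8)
The plan is to extract the Euler--Lagrange equation directly from the first variation formula \eqref{First-VF}, whose derivation has already been carried out. First I would record that the scalar $\frac{2\varphi^2+3\varphi_\xi+\epsilon}{9}$ appearing in the last line of \eqref{First-VF} is precisely $\lambda$ by \eqref{lam_exp}; keeping track of the factor $\varphi_\xi$ that multiplies this summand, the coefficient of $U$ in the integrand is the third-order operator
\begin{equation*}
\mathcal{E}[\varphi]:=\varphi_{\xi^3}+\varphi\varphi_{\xi^2}+\frac{2\varphi^2+3\varphi_\xi+\epsilon}{9}\,\varphi_\xi,
\end{equation*}
so that the first variation assumes the compact form $l'(t)=-\tfrac{\epsilon}{2}\int_0^1 U\,\mathcal{E}[\varphi]\,g\,dp$.

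By definition $C$ is fully affine extremal relative to the length functional exactly when $l'(t)=0$ for every admissible deformation \eqref{WU-flow}, that is, for every pair $(W,U)$ of smooth functions meeting the endpoint conditions $W(0)=W(1)=0$, $U(0)=U(1)=0$ and $U_{p^i}(0)=U_{p^i}(1)=0$ for $i=1,2,3$. Since the term $18\epsilon W_\xi$ contributes only a boundary term in passing to \eqref{First-VF}, which vanishes by $W(0)=W(1)=0$, the function $W$ has already been eliminated and the reduced functional depends on $U$ alone.

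For the sufficiency direction I would observe that $\mathcal{E}[\varphi]\equiv 0$ makes the integrand vanish identically, so $l'(t)=0$ for every admissible $U$ and $C$ is extremal. For necessity I would invoke the fundamental lemma of the calculus of variations: the arc-length density $g$ is strictly positive, since the curve is assumed non-degenerate and free of fully affine inflection points ($F\neq 0$, whence $g=\sqrt{\epsilon F}>0$), and $U$ ranges over the full space of smooth functions vanishing to third order at both endpoints; hence $\int_0^1 U\,\mathcal{E}[\varphi]\,g\,dp=0$ for all such $U$ forces $\mathcal{E}[\varphi]\equiv 0$. The ``in particular'' claim then follows at once, for a constant $\varphi$ gives $\varphi_\xi=\varphi_{\xi^2}=\varphi_{\xi^3}=0$ and therefore $\mathcal{E}[\varphi]=0$ identically.

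I do not expect a genuine obstacle here, as the real work resides in the lengthy curvature-evolution computations of Section \ref{sec-Evo} that yield \eqref{gtog} and hence \eqref{First-VF}. The only delicate point is ensuring that the admissible class of variations is rich enough to apply the fundamental lemma while respecting the third-order vanishing at the endpoints; this is dispatched in the usual way by localizing $U$ to a smooth bump function supported in the interior of $[0,1]$, where the boundary constraints hold automatically.
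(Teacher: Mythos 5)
Your proposal is correct and takes essentially the same route as the paper: the theorem is read off from the first variation formula \eqref{First-VF} by the arbitrariness of $U$ (the paper states exactly this in one sentence, leaving the fundamental lemma implicit, which you spell out via positivity of $g$ and interior bump functions). You also correctly restore the factor $\varphi_\xi$ multiplying $\frac{2\varphi^2+3\varphi_\xi+\epsilon}{9}=\lambda$, which is dropped (a typo) in the last line of \eqref{First-VF} but is present in the theorem's equation.
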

    We now turn to the main task at hand --- calculating the second variation formula. If the curve is extremal at $t=t_0$, we have
    \begin{align}\label{extm-eq}
       \varphi_{\xi^3}(\xi,t_0)+\varphi(\xi,t_0)\varphi_{\xi^2}(\xi,t_0)+\frac{2\varphi(\xi,t_0)^2+3\varphi_{\xi}(\xi,t_0)+\epsilon}{9}\varphi_{\xi}(\xi,t_0)=0.
    \end{align}
    It is worth pointing out that in the subsequent calculations, \eqref{extm-eq} shall be repeatedly applied to remove the higher derivatives $\varphi_{\xi^i}, \;i\geq3$ at $t=t_0$.
    Then by using \eqref{vaphi-evo}, \eqref{varphi1t}, \eqref{varphi2t}, \eqref{varphi3t}, \eqref{First-VF} and \eqref{extm-eq}, we find
    \begin{align}\label{sec-f}
    l''(t_0)&=-\frac{\epsilon}{2}\int^1_0U\bigg(\varphi_{\xi^3 t}+\varphi_t\varphi_{\xi^2}+\varphi\varphi_{\xi^2 t}+\frac{2\varphi^2+3\varphi_{\xi}+\epsilon}{9}\varphi_{\xi t}\nonumber\\
    &\qquad\qquad\qquad\qquad\qquad\qquad\qquad+\frac{4\varphi\varphi_t+3\varphi_{\xi t}}{9}\varphi_{\xi}\bigg)gdp.\nonumber\\
    &=-\frac{1}{2}\int^1_0U\big(\frac{9}{2}U_{\xi^8}+f_6U_{\xi^6}+f_5U_{\xi^5}+f_4U_{\xi^4}+f_3U_{\xi^3}\nonumber\\
    &\qquad\qquad\qquad\qquad\qquad\qquad\qquad+f_2U_{\xi^2}+f_1U_{\xi}+f_0U\big)gdp.
    \end{align}
    where
    \begin{small}
    \begin{align*}
    f_0&=\frac{2}{27}\varphi_{\xi}^2+6\varphi_{\xi}^4+\frac{196}{27}\epsilon\varphi_{\xi}^3-2\epsilon\varphi_{\xi^2}^2+\frac{8}{9}\varphi^4\varphi_{\xi}^2-100\varphi_{\xi}\varphi_{\xi^2}^2+\frac{152}{3}\varphi_{\xi^2}^2\varphi^2
    +\frac{16}{27}\epsilon\varphi_{\xi}^2\varphi^2\\
    &\qquad+\frac{152}{9}\varphi^2\varphi_{\xi}^3+\frac{112}{9}\epsilon\varphi\varphi_{\xi^2}\varphi_{\xi}+\frac{88}{3}\varphi\varphi_{\xi}^2\varphi_{\xi^2}+\frac{176}{9}\varphi_{\xi^2}\varphi_{\xi}\varphi^3,\\
    f_1&=\frac{4}{9}\varphi_{\xi^2}-\frac{8}{27}\varphi^5\varphi_{\xi}-190\varphi\varphi_{\xi^2}^2-200\varphi_{\xi^2}\varphi_{\xi}^2-\frac{122}{3}\varphi_{\xi}^3\varphi+\frac{136}{9}\varphi_{\xi}^2\varphi^3-\frac{8}{9}\varphi_{\xi^2}\varphi^4\\
    &\qquad+\frac{116}{9}\epsilon\varphi\varphi_{\xi}^2-\frac{2}{27}\varphi\varphi_{\xi}-\frac{8}{27}\epsilon\varphi^3\varphi_{\xi}-24\epsilon\varphi_{\xi}\varphi_{\xi^2}+\frac{4}{9}\epsilon\varphi^2\varphi_{\xi^2}-16\varphi^2\varphi_{\xi}\varphi_{\xi^2},\\
    f_2&=\frac{8}{9}\varphi_{\xi}+\frac{2}{27}\epsilon\varphi^4-\frac{76}{3}\epsilon\varphi_{\xi}^2-\frac{2}{81}\varphi^6+\frac{5}{54}\varphi^2+\frac{375}{2}\varphi_{\xi^2}^2-\frac{136}{3}\varphi_{\xi}^3+\frac{28}{3}\varphi_{\xi^2}\varphi^3\\
    &\qquad-\frac{145}{2}\varphi_{\xi}^2\varphi^2+\frac{26}{9}\varphi^4\varphi_{\xi}+\frac{47}{9}\epsilon\varphi^2\varphi_{\xi}+\frac{2}{3}\epsilon\varphi\varphi_{\xi^2}-176\varphi\varphi_{\xi}\varphi_{\xi^2}+\frac{2}{81}\epsilon,\\
    f_3&=-5\epsilon\varphi_{\xi^2}-43\varphi_{\xi^2}\varphi^2+285\varphi_{\xi}\varphi_{\xi^2}-\frac{46}{3}\varphi^3\varphi_{\xi}+59\varphi\varphi_{\xi}^2-\frac{41}{3}\epsilon\varphi\varphi_{\xi},\\
    f_4&=5\epsilon\varphi_{\xi}-\epsilon\varphi^2+57\varphi_{\xi}^2+\frac{\varphi^4}{2}+96\varphi\varphi_{\xi^2}+37\varphi^2\varphi_{\xi}+\frac{1}{2},\\
    f_5&=-9(2\varphi\varphi_{\xi}+9\varphi_{\xi^2}),\\
    f_6&=-3(\varphi^2+9\varphi_{\xi}-\epsilon).
    \end{align*}
    \end{small}%
    Recalling  $U_{p^i}(1)=U_{p^i}(0)=0, i=1,2,3$, $U(1)=U(0)=0$, integration by parts generates
    \begin{align*}
        \frac{9}{2}\int^1_0UU_{\xi^8}gdp&=\;\frac{9}{2}\int^1_0U_{\xi^4}^2gdp,\\
        \int^1_0f_6UU_{\xi^6}gdp&=\;\int^1_0\Big(-f_6U_{\xi^3}^2+\frac{9}{2}(f_6)_{\xi^2}U_{\xi^2}^2\nonumber\\
        &\qquad\qquad\qquad-3(f_6)_{\xi^4}U_{\xi}^2+\frac{1}{2}U^2(f_6)_{\xi^6}\Big)gdp,\\
        \int^1_0f_5UU_{\xi^5}gdp&=\;\int^1_0\Big(-\frac{5}{2}(f_5)_{\xi}U_{\xi^2}^2+\frac{5}{2}(f_5)_{\xi^3}U_{\xi}^2-\frac{1}{2}(f_5)_{\xi^5}U^2\Big)gdp,\\
        \int^1_0f_4UU_{\xi^4}gdp&=\;\int^1_0\Big(f_4U_{\xi^2}^2-2(f_4)_{\xi^2}U_{\xi}^2+\frac{1}{2}(f_4)_{\xi^4}U^2\Big)gdp,\\
        \int^1_0f_3UU_{\xi^3}gdp&=\;\int^1_0\Big(\frac{3}{2}(f_3)_{\xi}U_{\xi}^2-\frac{1}{2}(f_3)_{\xi^3}U^2\Big)gdp,\\
        \int^1_0f_2UU_{\xi^2}gdp&=\;\int^1_0\Big(-f_2U_{\xi}^2+\frac{1}{2}(f_2)_{\xi^2}U^2\Big)gdp,\\
        \int^1_0f_1UU_{\xi}gdp&=\;\int^1_0\Big(-\frac{1}{2}(f_1)_{\xi}U^2\Big)gdp.
    \end{align*}
    Hence, it is easy to check directly that,
    \begin{align}
        l''(t_0)&=-\frac{1}{2}\int^1_0\big(\frac{9}{2}U_{\xi^4}^2+P_3U_{\xi^3}^2+P_2U_{\xi^2}^2+P_1U_{\xi}^2+P_0U^2\big)gdp,
    \end{align}
    where
    \begin{small}
    \begin{align*}
        P_0&=\frac{1}{2}\big((f_6)_{\xi^6}-(f_5)_{\xi^5}+(f_4)_{\xi^4}-(f_3)_{\xi^3}+(f_2)_{\xi^2}-(f_1)_{\xi}\big)+f_0,\\
        P_1&=-3(f_6)_{\xi^4}+\frac{5}{2}(f_5)_{\xi^3}-2(f_4)_{\xi^2}+\frac{3}{2}(f_3)_{\xi}-f_2,\\
        P_2&=\frac{9}{2}(f_6)_{\xi^2}-\frac{5}{2}(f_5)_{\xi}+f_4,\\
        P_3&=-f_6.
    \end{align*}
    \end{small}%
    Obviously, we arrive at
    \begin{prop}
      If $P_0,P_1,P_2$ and $P_3$ all are non-negative, then the extremal curve is stable maximal. Otherwise, the extremal curve is unstable.
    \end{prop}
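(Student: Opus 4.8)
The plan is to read off the sign of the second variation directly from the formula for $l''(t_0)$ just established, namely
\[
l''(t_0)=-\frac{1}{2}\int^1_0\Big(\tfrac{9}{2}U_{\xi^4}^2+P_3U_{\xi^3}^2+P_2U_{\xi^2}^2+P_1U_{\xi}^2+P_0U^2\Big)g\,dp,
\]
which displays $l''(t_0)$ as $-\tfrac12$ times a weighted integral of a quadratic form in $U$ and its $\xi$-derivatives, with top coefficient $\tfrac92>0$, lower coefficients $P_0,\dots,P_3$, and weight $g$. Since every curve under consideration is non-degenerate and free of fully affine inflection points, we have $g>0$ throughout, so the sign of $l''(t_0)$ is governed entirely by the quadratic form in the brackets.

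For the stable-maximal direction I would argue as follows. Suppose $P_0,P_1,P_2,P_3\geq 0$. Then each summand in the integrand is a non-negative coefficient times a perfect square, so the bracket is pointwise non-negative; multiplying by $g>0$ and integrating yields $\int^1_0(\cdots)g\,dp\geq 0$, whence $l''(t_0)\leq 0$ for every admissible variation $U$, i.e.\ every $U$ satisfying the boundary data $U(0)=U(1)=0$ and $U_{p^i}(0)=U_{p^i}(1)=0$, $i=1,2,3$. Together with the extremality $l'(t_0)=0$, this says the fully affine length does not increase to second order along any such deformation, so the extremal curve is a local maximum of the length functional, that is, stable maximal.

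For the converse I would establish instability by producing a single admissible variation that makes the bracket integrate to a negative value. Assume $P_i(\xi_0)<0$ for some index $i\in\{0,1,2,3\}$ and some point $\xi_0$. I would take a smooth bump $U$ supported in a small neighborhood of $\xi_0$, which automatically meets the vanishing boundary data, and tune its profile and length scale so that the negative contribution $P_iU_{\xi^i}^2$ outweighs all the positive terms, forcing $\int^1_0(\cdots)g\,dp<0$ and hence $l''(t_0)>0$. Along such a direction the fully affine length strictly increases, so the extremal fails to be a local maximum and is therefore unstable.

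The hard part is precisely this last construction, because the top coefficient $\tfrac92$ is strictly positive and the fourth-order term resists being dominated. Under a single-scale bump $U=\psi((\xi-\xi_0)/\ell)$ one computes, in the arc-length variable, $\int U_{\xi^k}^2\,d\xi\sim\ell^{1-2k}$, so as $\ell\to0$ the leading $k=4$ term ($\sim\ell^{-7}$) overwhelms every lower term, while as $\ell$ grows only the $k=0$ term ($\sim\ell^{+1}$) can prevail. Thus a negative $P_0$ on a macroscopic region yields instability through a slowly varying bump, but isolating the effect of a negative \emph{intermediate} coefficient $P_1,P_2$ or $P_3$ demands a more carefully designed profile, one that concentrates mass at a balanced scale while still respecting $U(0)=U(1)=0$ and $U_{p^i}(0)=U_{p^i}(1)=0$. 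Engineering such a test function so that the single negative term controls the sign of the integral is the main obstacle, and it is exactly the check one carries out case by case when applying the criterion to the explicit extremal curves treated later in the paper.
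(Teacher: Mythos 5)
Your first half is exactly the paper's argument, such as it is: after deriving the display
\[
l''(t_0)=-\frac12\int_0^1\Bigl(\tfrac92 U_{\xi^4}^2+P_3U_{\xi^3}^2+P_2U_{\xi^2}^2+P_1U_{\xi}^2+P_0U^2\Bigr)g\,dp,
\]
the paper states the proposition with the single word ``Obviously,'' and the intended reasoning is precisely yours: $g>0$ for a non-degenerate curve without fully affine inflection points, so non-negative $P_0,\dots,P_3$ make the bracket a pointwise non-negative sum of squares, whence $l''(t_0)\le0$ for every admissible $U$ and the extremal is stable maximal. No issue there.

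The genuine gap is the ``otherwise unstable'' clause, and you have located it but underestimated it: it is not a hard construction waiting to be engineered, it is an implication that cannot be derived from sign information alone. Your own scaling count $\int U_{\xi^k}^2\,d\xi\sim\ell^{1-2k}$ already shows a single-scale bump detects only a negative $P_0$. The decisive obstruction appears in the constant-coefficient model: for $U$ of compact support, Plancherel gives
\begin{equation*}
\int\Bigl(\tfrac92 U_{\xi^4}^2+P_3U_{\xi^3}^2+P_2U_{\xi^2}^2+P_1U_{\xi}^2+P_0U^2\Bigr)d\xi
=\int q(\omega^2)\,|\widehat{U}(\omega)|^2\,d\omega,
\qquad q(t)=\tfrac92t^4+P_3t^3+P_2t^2+P_1t+P_0,
\end{equation*}
so the form is non-negative whenever $q\ge0$ on $[0,\infty)$. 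Taking $P_3=-1$, $P_2=10$, $P_1=P_0=0$ yields $q(t)=t^2\bigl(\tfrac92t^2-t+10\bigr)>0$ for $t>0$: the second variation is still $\le0$ for every admissible variation even though $P_3<0$, because the negative intermediate term is absorbed by its neighbors (equivalently, by the interpolation inequality $\int U_{\xi^3}^2\le(\int U_{\xi^2}^2)^{1/2}(\int U_{\xi^4}^2)^{1/2}$). Hence ``some $P_i<0$ somewhere'' does not imply instability; what your wave-packet/bump analysis genuinely proves is the weaker, correct statement that instability follows when the polynomial $q$ built from the (variable) coefficients takes a negative value --- for example when $P_0<0$ on an open set. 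So the failure to complete your last construction is not a failure of technique: the converse of the proposition is unprovable as stated, the paper itself offers no argument for it, and the paper's later case analysis (which infers instability from $P_3<0$ alone) silently relies on this same unjustified step.
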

    Furthermore, in view of \eqref{extm-eq} and the representations for $f_i,\;i=0,1,\cdots,6$ right behind \eqref{sec-f}, we have
    \begin{small}
    \begin{align*}
        P_0=\;&\frac{2}{3}\varphi_{\xi^2}^2(76\varphi^2-3\epsilon-150\varphi_{\xi})+\frac{8}{9}\varphi\varphi_{\xi^2}\varphi_{\xi}(22\varphi^2+33\varphi_{\xi}+14\epsilon)+6\varphi_{\xi}^4
        \nonumber\\
        &\qquad\qquad\qquad\qquad\qquad+\frac{4}{27}(114\varphi+49\epsilon)\varphi_{\xi}^3+\frac{2}{27}(12\varphi^4+8\epsilon\varphi^2+1)\varphi_{\xi}^2,\\
        P_1=\;&-\frac{117}{2}\varphi_{\xi^2}^2-\frac{1}{3}(2\varphi^2+7\epsilon-219\varphi_{\xi})\varphi\varphi_{\xi^2}+\frac{88}{3}\varphi_{\xi}^3+\frac{5}{6}(39\varphi^2+8\epsilon)\varphi_{\xi}^2\\
        &\qquad\qquad\qquad\qquad\qquad+\frac{1}{9}(2\varphi^4+25\epsilon\varphi^2-4)\varphi_{\xi}+\frac{1}{162}(\varphi^2-4\epsilon)(2\varphi^2+\epsilon)^2,\\
        P_2=\;&33\varphi_{\xi^2}\varphi+48\varphi_{\xi}^2+(19\varphi^2-4\epsilon)\varphi_{\xi}+\frac{\varphi^4}{2}-\epsilon\varphi^2+\frac{1}{2},\\
        P_3=\;&3(\varphi^2+9\varphi_{\xi}-\epsilon).
    \end{align*}
    \end{small}%
    Now let us investigate the stability for the concrete extremal curves through calculating the values of $P_0,P_1, P_2$ and $P_3$.

    \noindent {\bf Case 1.}
    If the fully affine curvature $\varphi$ of the extremal curve is constant, that is,
    $\varphi_{\xi^i}=0, i=1,2,\cdots$,
    then
    \begin{small}
    \begin{equation*}
    P_0=0,\quad P_1=\frac{1}{162}(\varphi^2-4\epsilon)(2\varphi^2+\epsilon)^2, \quad P_2=\frac{\varphi^4}{2}-\epsilon\varphi^2+\frac{1}{2},\quad P_3=3(\varphi^2-\epsilon),
    \end{equation*}
    \end{small}%
    and
    \begin{small}
    \begin{align*}
    l''(t_0)&=-\frac{1}{2}\int^1_0\bigg(\frac{9}{2}U_{\xi^4}^2+3(\varphi^2-\epsilon)U_{\xi^3}^2+\frac{ (\varphi^2-\epsilon)^2}{2}U_{\xi^2}^2\\
    &\qquad\qquad\qquad\qquad\qquad\qquad\qquad+\frac{(\varphi^2-4\epsilon)(2\varphi^2+\epsilon)^2}{162}U_{\xi}^2\bigg)gdp.
    \end{align*}
    \end{small}%
    It immediately follows that
    \begin{prop}
     (1) The curves with constant fully affine curvature  and $\epsilon=-1$ are stable fully affine maximal curves.
     (2) The curves with constant fully affine curvature and $\epsilon=1$ and $\varphi^2\ge4$ are stable fully affine maximal curves.
    \end{prop}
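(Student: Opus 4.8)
The plan is to invoke the stability criterion established just above, namely that an extremal curve is stable fully affine maximal as soon as $P_0,P_1,P_2,P_3$ are all non-negative: under that condition the integrand in
$l''(t_0)=-\frac12\int_0^1\big(\frac92U_{\xi^4}^2+P_3U_{\xi^3}^2+P_2U_{\xi^2}^2+P_1U_{\xi}^2+P_0U^2\big)g\,dp$
is non-negative, forcing $l''(t_0)\leq0$. Since the curve has constant fully affine curvature, every derivative $\varphi_{\xi^i}$ with $i\geq1$ vanishes, so the four coefficients collapse to the values recorded in Case 1, namely $P_0=0$ together with $P_1=\frac{1}{162}(\varphi^2-4\epsilon)(2\varphi^2+\epsilon)^2$, $P_2=\frac{\varphi^4}{2}-\epsilon\varphi^2+\frac12$ and $P_3=3(\varphi^2-\epsilon)$. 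The whole argument thus reduces to reading off the signs of these three quantities after fixing $\epsilon=\mp1$, and the key device is to rewrite each one as a product of manifestly non-negative factors by completing squares.

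For part (1) I set $\epsilon=-1$. Then $P_3=3(\varphi^2+1)$ is strictly positive, and $P_2=\frac12(\varphi^4+2\varphi^2+1)=\frac12(\varphi^2+1)^2$ is a perfect square, hence non-negative. Finally $P_1=\frac{1}{162}(\varphi^2+4)(2\varphi^2-1)^2$ is a non-negative factor times a square, so $P_1\geq0$. Together with $P_0=0$ this shows all four coefficients are non-negative, and the criterion yields stability for every constant-curvature curve with $\epsilon=-1$.

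For part (2) I set $\epsilon=1$ and impose $\varphi^2\geq4$. Now $P_2=\frac12(\varphi^4-2\varphi^2+1)=\frac12(\varphi^2-1)^2\geq0$, while $P_3=3(\varphi^2-1)$ is positive since $\varphi^2\geq4>1$. The only coefficient whose sign is not automatic is $P_1=\frac{1}{162}(\varphi^2-4)(2\varphi^2+1)^2$; here $(2\varphi^2+1)^2$ is a square and the remaining factor $\varphi^2-4$ is non-negative exactly under the hypothesis $\varphi^2\geq4$. Hence $P_1\geq0$, and again $P_0=0$, so all four coefficients are non-negative and the curve is stable maximal.

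The argument presents no genuine obstacle: once $\epsilon^2=1$ is used, each $P_i$ factors into squares times a single linear term in $\varphi^2$, which makes every sign transparent. The only point worth flagging is that the threshold $\varphi^2\geq4$ in part (2) is sharp for this criterion, since it is precisely the condition making the sole non-square factor $\varphi^2-4\epsilon$ non-negative when $\epsilon=1$; for $\epsilon=1$ with $\varphi^2<4$ one obtains $P_1<0$, so the sufficient condition fails and such a curve need not be stable.
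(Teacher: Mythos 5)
Your proposal is correct and follows essentially the same route as the paper: specialize $P_0,P_1,P_2,P_3$ to the constant-curvature case (the paper's Case 1), note $P_2=\frac{1}{2}(\varphi^2-\epsilon)^2$ and $P_1=\frac{1}{162}(\varphi^2-4\epsilon)(2\varphi^2+\epsilon)^2$, and read off non-negativity of all coefficients for $\epsilon=-1$, respectively $\epsilon=1$ with $\varphi^2\geq 4$, so that the second variation is non-positive. Your explicit sign-checking and factorizations are exactly the details the paper compresses into ``it immediately follows.''
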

    \begin{rem}
    It is obvious that the curves $y=x^\alpha ~\left(\alpha\notin\{0,1,\frac{1}{2},2\}\right)$ and $y=x\log x $ are stable maximal curves, and the logarithmic spiral with polar coordinates $\rho=\exp(\theta\tan\varphi)~(0\leq\varphi<\pi/2)$ is unstable (see \cite{ks} for a classification of plane curves with constant fully affine curvature).
    \end{rem}
    \noindent {\bf Case 2.} In \cite{ks}, we can find that the following $\varphi(\xi)$ are solutions of \eqref{extm-eq},
    \begin{equation*}
       \varphi(\xi)=\frac{3\sqrt{2}}{2}\tanh(\frac{\sqrt{2}}{3}\xi-c)\; , \; \varphi(\xi)=\frac{3\sqrt{2}}{2}\coth(\frac{\sqrt{2}}{3}\xi-c),
    \end{equation*}
    for $\epsilon=1$, and
    \begin{equation*}
       \varphi(\xi)=\frac{3\sqrt{2}}{2}\tan(c-\frac{\sqrt{2}}{3}\xi),\;  \varphi(\xi)=\frac{3\sqrt{2}}{2}\cot(\frac{\sqrt{2}}{3}\xi-c), \;   \varphi(\xi)=\frac{9}{2(\xi-c)}\pm\frac{\sqrt{2}}{2}
    \end{equation*}
    for $\epsilon=-1$. Without loss of generality, in the following, we assume $c=0$. Let us calculate stabilities of these extremal curves.

   {\bf 1.} If $\varphi(\xi)=-\frac{3\sqrt{2}}{2}\tan(\frac{\sqrt{2}}{3}\xi)$ and $\epsilon=-1$, a direct computation yields
   \begin{small}
   \begin{equation*}
      P_3=3(\varphi^2+9\varphi_{\xi}-\epsilon)=-\frac{3\left(7\cos(\frac{\sqrt{2}}{3}\xi)^2+9\right)}{2\cos(\frac{\sqrt{2}}{3}\xi)^2}<0.
   \end{equation*}
   \end{small}%
   Hence, the corresponding extremal curve is unstable.

   {\bf 2.} If $\varphi(\xi)=\frac{3\sqrt{2}}{2}\cot(\frac{\sqrt{2}}{3}\xi)$ and $\epsilon=-1$, by calculation, we get
   \begin{small}
   \begin{equation*}
      P_3=3(\varphi^2+9\varphi_{\xi}-\epsilon)=-3\left(\frac{9}{2}\cot(\frac{\sqrt{2}}{3}\xi)^2+8\right)<0.
   \end{equation*}
   \end{small}%
   Therefore, this corresponding extremal curve also is unstable.

   {\bf 3.}  If $\varphi=\frac{\sqrt{2}}{2}+\frac{9}{2\xi}$ and $\epsilon=-1$,
   we can obtain
   \begin{small}
   \begin{align*}
     P_0&=\;\frac{81\Big(4\sqrt{2}\xi({\xi}^2-81)-6{\xi}^2+2475\Big)}{4{\xi}^8},\\
     P_1&=\;\frac{27\Big(\sqrt{2}\xi(93-2{\xi}^2)+3{\xi}^2-603\Big)}{2{\xi}^6},\\
     P_2&=\;\frac{9\Big(12\sqrt{2}\xi(2{\xi}^2-43)+4{\xi}^4-36{\xi}^2+2781\Big)}{32{\xi}^4},\\
     P_3&=\;\frac{9(6\sqrt{2}\xi+2{\xi}^2-27)}{4{\xi}^2}.
   \end{align*}
   \end{small}%
   To proceed further, we need to calculate the sign of $P_0,P_1, P_2$ and $P_3$.
   \begin{itemize}
     \item $P_0\geq0$ if and only if  $\xi\geq \xi_1$, where $\xi_1=\frac{\sqrt{2}}{4}-\frac{(9250\sqrt{2}+24\sqrt{155171})^{1/3}}{4}-\frac{217}{2(9250\sqrt{2}+24\sqrt{155171})^{1/3}}\approx-10.55$;
     \item $P_1\geq0$ if and only if $\xi\leq\xi_2$, where $\xi_2=\frac{\sqrt{2}}{4}-\frac{(4450\sqrt{2}+100\sqrt{2398})^{1/3}}{4}-\frac{217}{2(4450\sqrt{2}+100\sqrt{2398})^{1/3}}\approx-8.03$;
     \item $P_2\geq0$;
     \item $P_3\geq0$ if and only if $\xi\in(-\infty, -\frac{9\sqrt{2}}{2}]\cup[\frac{3\sqrt{2}}{2},+\infty)$.
   \end{itemize}
   In fact, we can find $\xi_1<\xi_2<-\frac{9\sqrt{2}}{2}\approx-6.36$.
    Thus, when $\xi\in[\xi_1,\xi_2]$, the extremal curve with $\varphi=\frac{\sqrt{2}}{2}+\frac{9}{2\xi}$ and $\epsilon=-1$ is a stable maximal fully affine curve.

    If $\varphi=-\frac{\sqrt{2}}{2}+\frac{9}{2\xi}$ and $\epsilon=-1$,
   by a trivial computation, we have
   \begin{small}
   \begin{align*}
     P_0&=\;\frac{81\Big(-4\sqrt{2}\xi({\xi}^2-81)-6{\xi}^2+2475\Big)}{4{\xi}^8},\\
     P_1&=\;\frac{27\Big(-\sqrt{2}\xi(93-2{\xi}^2)+3{\xi}^2-603\Big)}{2{\xi}^6},\\
     P_2&=\;\frac{9\Big(-12\sqrt{2}\xi(2{\xi}^2-43)+4{\xi}^4-36{\xi}^2+2781\Big)}{32{\xi}^4},\\
     P_3&=\;\frac{9(-6\sqrt{2}\xi+2{\xi}^2-27)}{4{\xi}^2}.
   \end{align*}
   \end{small}%
   We discuss this in some detail.
   \begin{itemize}
     \item $P_0\geq0$ if and only if $\xi\leq \xi_3$, where $\xi_3=\frac{(9250\sqrt{2}+24\sqrt{155171})^{1/3}}{4}+\frac{217}{2(9250\sqrt{2}+24\sqrt{155171})^{1/3}}-\frac{\sqrt{2}}{4}\approx10.55$;
     \item $P_1\geq0$ if and only if $\xi\geq \xi_4$, where $\xi_4=\frac{(4450\sqrt{2}+100\sqrt{2398})^{1/3}}{4}+\frac{217}{2(4450\sqrt{2}+100\sqrt{2398})^{1/3}}-\frac{\sqrt{2}}{4}\approx8.03$;
     \item $P_2\geq0$;
     \item $P_3\geq0$ if and only if $\xi\in(-\infty, -\frac{3\sqrt{2}}{2}]\cup[\frac{9\sqrt{2}}{2},+\infty)$.
   \end{itemize}
    We can see $\xi_3>\xi_4>\frac{9\sqrt{2}}{2}$. Thus, when $\xi\in[\xi_4,\xi_3]$, the extremal curve with $\varphi=-\frac{\sqrt{2}}{2}+\frac{9}{2\xi}$ and $\epsilon=-1$ is a stable maximal fully affine curve.

   {\bf 4.}  If $\varphi(\xi)=\frac{3\sqrt{2}}{2}\coth(\frac{\sqrt{2}}{3}\xi)$ and $\epsilon=1$, one can verify that
   \begin{align*}
     P_0&=\;\frac{20\Big(28\cosh^4\left(\frac{\sqrt{2}\xi}{3}\right)+119\cosh^2\left(\frac{\sqrt{2}\xi}{3}\right)+18\Big)}{27\sinh^8\left(\frac{\sqrt{2}\xi}{3}\right)},\\
     P_1&=\;\frac{25\cosh^6\left(\frac{\sqrt{2}\xi}{3}\right)+15\cosh^4\left(\frac{\sqrt{2}\xi}{3}\right)-4398\cosh^2\left(\frac{\sqrt{2}\xi}{3}\right)-2878}{81\sinh^6\left(\frac{\sqrt{2}\xi}{3}\right)},\\
     P_2&=\;\frac{49\cosh^4\left(\frac{\sqrt{2}\xi}{3}\right)-96\cosh^2\left(\frac{\sqrt{2}\xi}{3}\right)+356}{8\sinh^4\left(\frac{\sqrt{2}\xi}{3}\right)},\\
     P_3&=\;\frac{3\Big(7\cosh^2\left(\frac{\sqrt{2}\xi}{3}\right)-16\Big)}{2\sinh^2\left(\frac{\sqrt{2}\xi}{3}\right)}.
   \end{align*}
   Let us present the details in a more concrete form.
   \begin{itemize}
     \item $P_0>0$;
     \item $P_1\geq0$ if and only if $\xi\in(-\infty,-\xi_5]\cup[\xi_5,+\infty)$, where
     \begin{small}
     \begin{equation*}
     \xi_5=\frac{3\sqrt{2}}{2}\text{arccosh}
     \left(\frac{\sqrt{30\,\sqrt {163}\cos \left( \left(\arccos  {\frac {185\,\sqrt {163}}{26569}} \right) /3 \right) -5}}{5}\right)\approx4.17;
     \end{equation*}
     \end{small}%
     \item $P_2\geq0$;
     \item $P_3\geq0$ if and only if
     \begin{equation*}
     \xi\in(-\infty, -\frac{3\sqrt{2}\text{arccosh}\left(\frac{4\sqrt{7}}{7}\right)}{2}]\cup[\frac{3\sqrt{2}\text{arccosh}\left(\frac{4\sqrt{7}}{7}\right)}{2},+\infty).
     \end{equation*}
   \end{itemize}
    It shows $\xi_5>\frac{3\sqrt{2}\text{arccosh}\left(\frac{4\sqrt{7}}{7}\right)}{2}\approx2.06$. Thus, when $\xi\in(-\infty,-\xi_5]\cup[\xi_5,+\infty)$, the extremal curve with $\varphi(\xi)=\frac{3\sqrt{2}}{2}\coth(\frac{\sqrt{2}}{3}\xi)$ and $\epsilon=1$ is a stable maximal fully affine curve.

   {\bf 5.}  If $\varphi(\xi)=\frac{3\sqrt{2}}{2}\tanh(\frac{\sqrt{2}}{3}\xi)$ and $\epsilon=1$,
   we can obtain
   \begin{small}
   \begin{align*}
     P_0&=\;\frac{20\Big(28\cosh^4\left(\frac{\sqrt{2}\xi}{3}\right)-175\cosh^2\left(\frac{\sqrt{2}\xi}{3}\right)+165\Big)}{27\cosh^8\left(\frac{\sqrt{2}\xi}{3}\right)},\\
     P_1&=\;\frac{25\cosh^6\left(\frac{\sqrt{2}\xi}{3}\right)-90\cosh^4\left(\frac{\sqrt{2}\xi}{3}\right)-4293\cosh^2\left(\frac{\sqrt{2}\xi}{3}\right)+7236}{81\cosh^6\left(\frac{\sqrt{2}\xi}{3}\right)},\\
     P_2&=\;\frac{49\cosh^4\left(\frac{\sqrt{2}\xi}{3}\right)-2\cosh^2\left(\frac{\sqrt{2}\xi}{3}\right)+309}{8\cosh^4\left(\frac{\sqrt{2}\xi}{3}\right)},\\
     P_3&=\;\frac{3\Big(7\cosh^2\left(\frac{\sqrt{2}\xi}{3}\right)+9\Big)}{2\cosh^2\left(\frac{\sqrt{2}\xi}{3}\right)}.
   \end{align*}
   \end{small}%
   By a direct computation, we find
   \begin{itemize}
     \item $P_0\geq0$ if and only if $\xi\in(-\infty,-\xi_6]\cup[-\xi_7,\xi_7]\cup[\xi_6,+\infty)$, where
         \begin{small}
         \begin{equation*}
         \xi_6=\frac{3\sqrt{2}}{2}\text{arccosh}\sqrt{\frac{175+\sqrt{12145}}{56}}\approx 3.08,
         \end{equation*}
         \end{small}%
         and
         \begin{small}
         \begin{equation*}
         \xi_7=\frac{3\sqrt{2}}{2}\text{arccosh}\sqrt{\frac{175-\sqrt{12145}}{56}}\approx0.82;
         \end{equation*}
         \end{small}%

     \item $P_1\geq0$ if and only if $\xi\in(-\infty,-\xi_8]\cup[-\xi_9,\xi_9]\cup[\xi_8,+\infty)$, where
     \begin{small}
     \begin{equation*}
       \xi_8=\frac{3\sqrt{2}}{2}\text{arccosh}
     \sqrt{\frac{6}{5}\left(\sqrt {163}\cos \left( \frac{\pi-\arccos \left( \frac{185\sqrt{163}}{26569}\right)}{3} \right) +1\right)}\approx4.25,
     \end{equation*}
     \end{small}%
     and
     \begin{small}
     \begin{equation*}
       \xi_9=\frac{3\sqrt{2}}{2}\text{arccosh}
     \sqrt{\frac{6}{5}\left(\sqrt {163}\cos \left( \frac{\pi+\arccos \left( \frac{185\sqrt{163}}{26569}\right)}{3} \right) +1\right)}\approx1.57;
     \end{equation*}
     \end{small}%
     \item $P_2\geq0$;
     \item $P_3>0$.
   \end{itemize}
     Thus, when $\xi\in(-\infty,-\xi_8]\cup[-\xi_7,\xi_7]\cup[\xi_8,+\infty)$, this extremal curve with $\varphi(\xi)=\frac{3\sqrt{2}}{2}\tanh(\frac{\sqrt{2}}{3}\xi)$ and $\epsilon=1$ is a stable maximal fully affine curve.

\section{Isoperimetric inequality in fully affine geometry}\label{sec-iso}

    In this section, we mainly focus on closed curves on plane. According to the assumption in the first paragraph of Section \ref{sec-Evo},
    we have $\bm{[}C_p,C_{p^2}\bm{]}\neq0$, which implies the Euclidean curvature $\kappa\neq0$.
    Thus, in this paper, a closed curve is convex.
    The isoperimetric inequality for a domain in $\R^n$ is one of the most beautiful results in geometry.
    For example, in Euclidean geometry $\R^2$, the isoperimetric inequality states, for the length $L$ of a closed curve and the area $A$ of the planar region that it encloses, that
    \begin{equation*}
      L^2\geq 4\pi A,
    \end{equation*}
    and that equality holds if and only if the curve is a circle.
    However, the area $A$ is variant under fully affine transformations,
    and then a natural question arises: What is the isoperimetric inequality in fully affine geometry.
    Firstly, we state one lemma that will help with our subsequent argument.
    \begin{lem}\label{lem-ep1} If the curve $C$ is closed, then $\displaystyle \oint_{C}\varphi d\xi=0$ and $\epsilon=1$.
    \end{lem}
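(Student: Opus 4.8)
The plan is to establish the two assertions separately, each by reducing the fully affine data to the underlying equi-affine data of the closed (hence convex) curve.

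\emph{Vanishing of the total curvature.} The key observation is that $D(\xi):=\bm{[}C_\xi,C_{\xi^2}\bm{]}$ obeys a first-order linear ODE along the curve. Differentiating in $\xi$ and using $\bm{[}C_{\xi^2},C_{\xi^2}\bm{]}=0$ gives $D_\xi=\bm{[}C_\xi,C_{\xi^3}\bm{]}$, whereas the definition \eqref{affine-curv} rearranges to $\bm{[}C_\xi,C_{\xi^3}\bm{]}=-\varphi D$. Hence $D_\xi=-\varphi D$, so $D(\xi)=D(0)\exp\!\left(-\int_0^\xi\varphi\,d\xi'\right)$. Since $C$ is closed and non-degenerate, $D$ is a nowhere-vanishing periodic function of period $L=\oint_C d\xi$; evaluating this formula after one full period forces $\exp\!\left(-\oint_C\varphi\,d\xi\right)=1$, that is, $\oint_C\varphi\,d\xi=0$.

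\emph{Reduction of $\epsilon$ to the equi-affine curvature.} For the second claim I would parametrize by equi-affine arc length $\sigma$, available because the closed curve is convex, normalized so that $\bm{[}C_\sigma,C_{\sigma^2}\bm{]}=1$ (Example \ref{exm-ea} and Section \ref{ea-ivs}). Writing $C_{\sigma^3}=-\mu\,C_\sigma$ for the equi-affine curvature $\mu$ and differentiating the structure relations yields $\bm{[}C_\sigma,C_{\sigma^3}\bm{]}=0$, $\bm{[}C_\sigma,C_{\sigma^4}\bm{]}=-\mu$ and $\bm{[}C_{\sigma^2},C_{\sigma^3}\bm{]}=\mu$. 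Inserting these into the expression of Example \ref{exm-f-r2} collapses it to $F=-3\mu+12\mu=9\mu$ in the parameter $\sigma$. By the covariance of $F$ recorded in Theorem \ref{thm-higher} with $n=2$, in an arbitrary parameter $F=9\mu\,(d\sigma/dp)^2$, so $\epsilon=\mathrm{sgn}(F)=\mathrm{sgn}(\mu)$; once this sign is pinned down it also yields the relation $d\xi=3\sqrt{\mu}\,d\sigma$, useful for Section \ref{sec-iso}.

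\emph{Positivity of $\mu$.} Settling this sign is the crux of the lemma. The no-inflection hypothesis $F\neq0$ gives $\mu\neq0$ everywhere, and continuity on the connected closed curve makes the sign of $\mu$ constant, so it suffices to exclude $\mu<0$. Here I would argue directly from $C_{\sigma^3}=-\mu\,C_\sigma$: each Cartesian component $u$ of the periodic tangent field $C_\sigma$ satisfies Hill's equation $u''+\mu u=0$. If $\mu<0$ throughout, then at any positive maximum of $u$ one has $u''\le0$ while $u''=-\mu u>0$, and symmetrically at a negative minimum; the maximum principle then forces $u\equiv0$ for both components, whence $C_\sigma\equiv0$, impossible for a genuine curve. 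Therefore $\mu>0$ and $\epsilon=\mathrm{sgn}(\mu)=1$. The only bookkeeping requiring care is fixing the orientation of the traversal so that $\bm{[}C_\sigma,C_{\sigma^2}\bm{]}=+1$ rather than $-1$; with that convention both parts go through, and $\epsilon=1$ closes the proof.
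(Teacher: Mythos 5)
Your proof is correct, and it follows the paper's overall skeleton — total curvature vanishes by periodicity, $\epsilon=\mathrm{sgn}(\mu)$ after passing to the equi-affine parametrization, and then positivity of $\mu$ — but the crucial step, ruling out $\mu<0$, is done by a genuinely different argument. The paper invokes the Euclidean representation $\mu=\kappa^{4/3}-\tfrac{5}{9}\kappa^{-8/3}\kappa_s^2+\tfrac{1}{3}\kappa^{-5/3}\kappa_{ss}$ and integrates by parts to obtain the identity $\oint_C\mu\,ds=\oint_C\kappa^{4/3}\,ds>0$; since the no-inflection hypothesis forces $\mu$ to have constant sign, positivity of this mean value pins down $\mu>0$. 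You instead stay entirely inside equi-affine geometry: the structure equation $C_{\sigma^3}=-\mu C_\sigma$ makes each component $u$ of the periodic tangent field $C_\sigma$ a solution of Hill's equation $u''+\mu u=0$, and your maximum-principle argument (no positive maximum and no negative minimum are possible when $\mu<0$) annihilates every periodic solution, contradicting $[C_\sigma,C_{\sigma^2}]=1$. Your route is more self-contained — it needs no formula relating $\mu$ to the Euclidean curvature and no integration by parts — and it isolates the classical mechanism that a closed convex curve cannot have everywhere non-positive equi-affine curvature; the paper's route is shorter given the standard $\mu$--$\kappa$ formula and yields the quantitative by-product $\oint_C\mu\,ds=\oint_C\kappa^{4/3}\,ds$ as a bonus. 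Your first part, namely $D_\xi=-\varphi D$ for $D=[C_\xi,C_{\xi^2}]$ together with periodicity and non-vanishing of $D$, is exactly the justification the paper leaves implicit when it declares $\oint_C\varphi\,d\xi=0$ to be obvious from \eqref{affine-curv}, and your computation $F=9\mu$ in the $\sigma$-parametrization, combined with the covariance of $F$ from Theorem \ref{thm-higher}, correctly substantiates the paper's unproved assertions $\epsilon=\mathrm{sgn}(\mu)$ and $d\xi/d\sigma=3\sqrt{\epsilon\mu}$.
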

    \begin{proof}
    By \eqref{affine-curv}, it is obvious to see $\displaystyle \oint_{C}\varphi d\xi=0$ for a closed curve $C$.
    Let $\sigma$ and $\mu$ be equi-affine arc length parameter and equi-affine curvature. Let $s$ and $\kappa$ be the Euclidean arc length parameter and
    Euclidean curvature. It is well known that (cf. \cite{st})
    \begin{equation*}
      \frac{d\sigma}{ds}=\kappa^{1/3}>0,\quad \mu=\kappa^{4/3}-\frac{5}{9}\kappa^{-8/3}\kappa^2_s+\frac{1}{3}\kappa^{-5/3}\kappa_{ss}.
    \end{equation*}
    Note that since $\bm{[}C_{p},~C_{p^2}\bm{]}\neq0$,  a suitable parameter choice can guarantee $\bm{[}C_{p},~C_{p^2}\bm{]}>0$.

    In equi-affine setting, it is not hard to verify
    \begin{equation*}
     \epsilon=\mathrm{sgn}(\mu),\quad \frac{d\xi}{d\sigma}=3\sqrt{\epsilon\mu}, \quad \varphi=\frac{\epsilon}{2}(\epsilon\mu)^{-3/2}\mu_{\sigma}.
    \end{equation*}
    Using integration by parts, we have
    \begin{align*}
      \oint_C\mu ds&=\;\oint_C\left(\kappa^{4/3}-\frac{5}{9}\kappa^{-8/3}\kappa^2_s+\frac{1}{3}\kappa^{-5/3}\kappa_{ss}\right) ds\\
      &=\;\oint_C\Big(\kappa^{4/3}-\frac{5}{9}\kappa^{-8/3}\kappa^2_s\Big) ds-\oint_C\Big(\frac{1}{3}\kappa_{s}\Big) d\left(\kappa^{-5/3}\right)\\
       &=\; \oint_C\kappa^{4/3}ds>0.
    \end{align*}
    Based on $\displaystyle \frac{d\xi}{d\sigma}=3\sqrt{\epsilon\mu}\neq0$, one can obtain $\mu>0$, which implies $\epsilon=1$. Hence,  we complete the proof.
    \end{proof}
    By \cite{ast,st}, we see any convex smooth embedded curve converges to an elliptical point
    when evolving according to equi-affine heat flow, that is, $\alpha=0,\;\beta=1$ in \eqref{eamotion}.
    Now we find
    \begin{lem}\label{lem-mu}
      In equi-affine setting, for any convex smooth embedded closed curve $C$,
      \begin{equation*}
      \oint_C\sqrt{\mu}d\sigma\leq2\pi,
      \end{equation*}
      and that equality holds if and only if the curve is an ellipse.
    \end{lem}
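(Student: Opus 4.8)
The plan is to exploit the equi-affine (indeed full affine) invariance of the functional together with the equi-affine heat flow that was just invoked. First I would record two structural facts. Since both $\mu$ and $d\sigma$ are equi-affine invariants and, under the scaling $C\mapsto\lambda C$, one has $\mu\mapsto\lambda^{-4/3}\mu$ and $d\sigma\mapsto\lambda^{2/3}d\sigma$, the one-form $\sqrt{\mu}\,d\sigma$ is invariant under all of $\mathrm{GA}(2)$; consequently $\oint_C\sqrt{\mu}\,d\sigma$ depends only on the affine class of $C$ up to scale. In particular it is constant on the family of ellipses, and a direct computation on the unit circle ($\kappa\equiv1$, $\mu\equiv1$, $\bar L=2\pi$) gives $\oint\sqrt{\mu}\,d\sigma=2\pi$. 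This pins down both the candidate extremal value and the equality configuration.

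Second, I would set $I(t):=\oint_{C(\cdot,t)}\sqrt{\mu}\,d\sigma$ along the equi-affine heat flow \eqref{eamotion} with $\alpha=0,\ \beta=1$, and aim to show $I$ is monotone. Writing $\partial_t(d\sigma)=h\,d\sigma$ for the flow-induced change of the metric and inserting the evolution of $\mu$ (both derived in the Appendix for this flow), one obtains $I'(t)=\oint\big(\tfrac{1}{2}\mu^{-1/2}\partial_t\mu+\sqrt{\mu}\,h\big)\,d\sigma$. I would substitute the flow expressions and integrate by parts around the closed curve. Guided by Gage's method \cite{gag}, the expectation is that all boundary terms cancel and the result collapses to a single sign-definite integral of the shape $I'(t)=c\oint \mu^{-3/2}\mu_\sigma^{2}\,d\sigma\ge0$, after using $\oint\mu_\sigma\,d\sigma=0$ and the closed-curve relation $\oint\varphi\,d\xi=0$ from Lemma \ref{lem-ep1}. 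Granting monotonicity, since \cite{ast,st} guarantee $C(\cdot,t)$ converges to an elliptical point and $I$ is scale invariant, $\lim_{t\to T}I(t)=2\pi$; hence $I(0)\le 2\pi$, which is the asserted inequality. For the equality case I would argue by rigidity: if $\oint_{C}\sqrt{\mu}\,d\sigma=2\pi$ then $I(0)=\lim_{t\to T}I(t)$ forces $I\equiv2\pi$, so $I'\equiv0$, so the sign-definite integrand vanishes, giving $\mu_\sigma\equiv0$; a closed convex curve of constant equi-affine curvature is an ellipse, and conversely every ellipse realizes equality.

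The main obstacle is precisely this monotonicity step: it requires the exact evolution of $\mu$ under the affine normal flow and a genuinely sign-definite reduction after integration by parts, which is where an error could hide and where the bulk of the computation lives.

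As a safeguard, and as an independent route that proves the lemma without the flow, I note that the bound follows by chaining three inequalities. Cauchy--Schwarz gives $\left(\oint\sqrt{\mu}\,d\sigma\right)^2\le\left(\oint\mu\,d\sigma\right)\bar L$; the Sapiro--Tannenbaum inequality \eqref{st-isoi} gives $\oint\mu\,d\sigma\le \bar L^2/(2A)$; and Su's inequality \eqref{su-isoi} gives $\bar L^3\le 8\pi^2A$. Combining them yields $\left(\oint\sqrt{\mu}\,d\sigma\right)^2\le \bar L^3/(2A)\le 4\pi^2$, and equality throughout forces $\mu$ constant (Cauchy--Schwarz) and $C$ an ellipse (Su), recovering both the inequality and the rigidity. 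I would use this chain as a consistency check on the sign and extremal value obtained from the flow computation.
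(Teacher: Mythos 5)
Your proposal is correct and follows essentially the same route as the paper: the monotonicity computation $\frac{d}{dt}\oint_C\sqrt{\mu}\,d\sigma=\frac{1}{12}\oint_C\mu^{-3/2}\mu_\sigma^2\,d\sigma\ge0$ under the equi-affine heat flow, combined with convergence to an elliptical point and the value $2\pi$ on ellipses, is exactly the paper's proof of Lemma \ref{lem-mu}. Even your ``safeguard'' chain (Cauchy--Schwarz together with \eqref{st-isoi} and \eqref{su-isoi}) is precisely the alternative argument the paper records in the remark immediately following the lemma.
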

    \begin{proof}
    A direct computer shows that if $C$ is an ellipse, $\displaystyle \oint_C\sqrt{\mu}d\sigma=2\pi.$
    Under the equi-affine heat flow, \eqref{eqgtog} and \eqref{eqkt} can be represented as
    \begin{equation*}
     \frac{\bar{g}_t}{\bar{g}}=-\frac{2}{3}\mu,\quad \mu_t=\frac{4}{3}\mu^2+\frac{1}{3}\mu_{\sigma^2}.
    \end{equation*}
    In particular, by \cite{st}, if $\mu(\cdot,0)>0$, then $\mu(\cdot,t)>0$.
    Hence, if the initial curve is not an ellipse, integration by parts gives
    \begin{align*}
      \frac{d}{dt}\oint_C\sqrt{\mu}d\sigma&=\;\oint_C\left(\frac{\mu_t}{2\sqrt{\mu}}+\sqrt{\mu}\frac{\bar{g}_t}{\bar{g}}\right)d\sigma\\
      &=\;\oint_C\frac{1}{6\sqrt{\mu}}\mu_{\sigma^2}d\sigma\\
      &=\;\frac{1}{12}\oint_C\mu^{-3/2}\mu_{\sigma}^2d\sigma>0,
    \end{align*}
    which implies $\displaystyle \oint_C\sqrt{\mu}d\sigma$ is strictly monotone increasing with respect to the time $t$.
    Since any convex smooth embedded curve converges to an elliptical point
    when evolving according to equi-affine heat flow, and then we complete the proof.
    \end{proof}
    \begin{rem}
     An alternative proof of Lemma \ref{lem-mu} is to apply \eqref{su-isoi} and \eqref{st-isoi}, which imply
     \begin{equation*}
           2\oint_C\mu d\sigma\leq \left(\oint_C d\sigma\right)^2 A^{-1}\leq 8\pi^2\left(\oint_C d\sigma\right)^{-1}.
     \end{equation*}
     The Cauchy--Schwarz inequality leads to $\displaystyle \left(\oint_C\sqrt{\mu} d\sigma\right)^2\leq\oint_C d\sigma\oint_C\mu d\sigma\leq 4\pi^2$, then we achieve the desired result.
    \end{rem}

    According to $\displaystyle \oint_Cd\xi=3\oint_C\sqrt{\mu}d\sigma$, we obtain the fully affine isoperimetric inequality.
    \begin{thm}\label{thm-faii}
      In fully affine geometry, for any convex smooth embedded closed curve $C$, the fully affine perimeter of the curve
      \begin{equation*}
       L=\oint_Cd\xi\leq6\pi,
       \end{equation*}
      and that equality holds if and only if the curve is an ellipse.
    \end{thm}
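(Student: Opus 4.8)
The plan is to obtain Theorem \ref{thm-faii} as a direct consequence of the equi-affine estimate in Lemma \ref{lem-mu} by translating the fully affine perimeter into equi-affine data. The essential ingredient is the change-of-variable identity established in the proof of Lemma \ref{lem-ep1}, where it is shown that $\frac{d\xi}{d\sigma}=3\sqrt{\epsilon\mu}$, with $\sigma$ the equi-affine arc length parameter and $\mu$ the equi-affine curvature. Because $C$ is a convex smooth embedded closed curve, Lemma \ref{lem-ep1} also guarantees $\epsilon=1$ and $\mu>0$ everywhere along $C$, so the relation reduces to $d\xi=3\sqrt{\mu}\,d\sigma$ and the fully affine arc length is genuinely well defined at every point of the curve.

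First I would integrate this identity around the closed curve to obtain the conversion formula
\begin{equation*}
L=\oint_C d\xi=3\oint_C\sqrt{\mu}\,d\sigma,
\end{equation*}
which is precisely the relation recorded immediately before the statement. Next I would invoke Lemma \ref{lem-mu}, which bounds $\oint_C\sqrt{\mu}\,d\sigma\leq 2\pi$ for any convex smooth embedded closed curve, with equality exactly for ellipses. Multiplying by $3$ yields $L\leq 6\pi$. Since ellipses are characterized by the equality case in Lemma \ref{lem-mu} and the conversion factor $3$ is a fixed constant independent of the curve, that characterization transfers verbatim, so equality in the fully affine inequality holds if and only if $C$ is an ellipse.

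The genuine analytic content is carried by Lemma \ref{lem-mu}, whose proof exploits the strict monotonicity of $\oint_C\sqrt{\mu}\,d\sigma$ under the equi-affine heat flow together with the known convergence of convex embedded curves to elliptical points under that flow. At the level of the present theorem the only subtle point is the global positivity of $\mu$, which is what legitimizes the substitution $d\xi=3\sqrt{\mu}\,d\sigma$ along the whole curve; since that positivity is already secured by Lemma \ref{lem-ep1}, no further obstacle remains and the theorem follows from the two-line computation above.
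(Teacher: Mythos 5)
Your proposal is correct and follows essentially the same route as the paper: the paper also derives Theorem \ref{thm-faii} directly from the identity $\oint_C d\xi=3\oint_C\sqrt{\mu}\,d\sigma$ (coming from $\frac{d\xi}{d\sigma}=3\sqrt{\epsilon\mu}$ with $\epsilon=1$ and $\mu>0$ via Lemma \ref{lem-ep1}) combined with Lemma \ref{lem-mu}, including the transfer of the equality case for ellipses.
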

    In view of $\displaystyle \oint_Cd\xi=\oint_C\sqrt{\frac{9\kappa^4+3\kappa\kappa_{ss}-5\kappa_s^2}{\kappa^2}}ds$, we have
    \begin{cor}
      In Euclidean geometry, for any convex smooth embedded closed curve $C$, we have
      \begin{equation*}
      \oint_C\sqrt{\frac{9\kappa^4+3\kappa\kappa_{ss}-5\kappa_s^2}{\kappa^2}}ds\leq6\pi,
      \end{equation*}
      and that equality holds if and only if the curve is an ellipse.
    \end{cor}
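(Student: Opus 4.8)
The plan is to observe that this Euclidean inequality is nothing more than Theorem \ref{thm-faii} rewritten in Euclidean variables; the only task is therefore to express the fully affine arc length element $d\xi$ explicitly in terms of the Euclidean arc length $s$ and the Euclidean curvature $\kappa$, after which the conclusion follows verbatim.

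First I would assemble the chain of reparametrizations already recorded in the proof of Lemma \ref{lem-ep1}. For a convex closed curve $C$ we have $\epsilon=1$, and the relations
\begin{equation*}
\frac{d\sigma}{ds}=\kappa^{1/3},\qquad \frac{d\xi}{d\sigma}=3\sqrt{\epsilon\mu}=3\sqrt{\mu},\qquad \mu=\kappa^{4/3}-\frac{5}{9}\kappa^{-8/3}\kappa_s^2+\frac{1}{3}\kappa^{-5/3}\kappa_{ss}
\end{equation*}
hold, where $\sigma$ and $\mu$ denote the equi-affine arc length and curvature. Composing the first two gives $\displaystyle\frac{d\xi}{ds}=3\sqrt{\mu}\,\kappa^{1/3}$, hence
\begin{equation*}
\left(\frac{d\xi}{ds}\right)^2=9\mu\kappa^{2/3}.
\end{equation*}

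Next I would substitute the expression for $\mu$ and simplify the exponents of $\kappa$, so that the fractional powers collapse to integer ones:
\begin{equation*}
9\mu\kappa^{2/3}=9\kappa^{2}-5\kappa^{-2}\kappa_s^2+3\kappa^{-1}\kappa_{ss}=\frac{9\kappa^4+3\kappa\kappa_{ss}-5\kappa_s^2}{\kappa^2}.
\end{equation*}
Since $\mu>0$ along a convex closed curve (established in Lemma \ref{lem-ep1}), the right-hand side is positive and we may take positive square roots, obtaining the identity
\begin{equation*}
d\xi=\sqrt{\frac{9\kappa^4+3\kappa\kappa_{ss}-5\kappa_s^2}{\kappa^2}}\,ds.
\end{equation*}
Integrating around $C$ and invoking Theorem \ref{thm-faii} then yields the stated inequality, while the equality clause transfers directly from that theorem, since equality in $\oint_C d\xi\le 6\pi$ holds precisely for ellipses.

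The computation is entirely routine, and there is no genuine obstacle here: all the analytic substance, namely the monotonicity argument under equi-affine heat flow and the convergence to an elliptical point, is already packaged inside Lemma \ref{lem-mu} and Theorem \ref{thm-faii}. The only point requiring minor care is bookkeeping the fractional powers of $\kappa$ when combining $\frac{d\sigma}{ds}$, $\frac{d\xi}{d\sigma}$, and the formula for $\mu$, so that the $\kappa^{4/3}$, $\kappa^{-8/3}$, and $\kappa^{-5/3}$ terms collapse cleanly; verifying this algebraic identity is all that separates the corollary from its parent theorem.
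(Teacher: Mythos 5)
Your proposal is correct and follows the paper's own route exactly: the paper derives the corollary from Theorem \ref{thm-faii} by invoking the identity $\oint_C d\xi=\oint_C\sqrt{\bigl(9\kappa^4+3\kappa\kappa_{ss}-5\kappa_s^2\bigr)/\kappa^2}\,ds$, which is precisely the computation you spell out via $\frac{d\xi}{ds}=3\sqrt{\mu}\,\kappa^{1/3}$ and the formula for $\mu$ from Lemma \ref{lem-ep1}. The only difference is that the paper states this identity without verification, while you supply the (correct) bookkeeping of the fractional powers of $\kappa$.
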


\section{Fully affine heat flow}

    The fully affine heat evolution equation is given by
    \begin{equation}\label{heat-evo}
       \frac{\partial C(p,~t)}{\partial t}=C_{\xi^2}(p,~t),  \qquad C(\cdot,~0)=C_0(\cdot).
    \end{equation}
    This implies $U=1$ and $W=0$ in \eqref{curve-motion}. Then \eqref{gtog} and \eqref{vaphi-evo} can be rewritten as
    \begin{align}
    \frac{g_t}{g}&=\frac{\epsilon}{9}(2\varphi^2\varphi_{\xi}+3\varphi\varphi_{\xi^2}+6\varphi_{\xi}^2-9\epsilon\varphi_{\xi}-9\varphi_{\xi^3}),\label{heat-gt}\\
    \varphi_t&=-\frac{2}{9}\epsilon \varphi^3\varphi_{\xi}+\frac{2}{3}\epsilon \varphi\varphi^2_{\xi}-\frac{1}{9}\varphi\varphi_{\xi}+\frac{\epsilon}{3}\varphi^2\varphi_{\xi^2}+5\epsilon \varphi_{\xi}\varphi_{\xi^2}\nonumber\\
    &\qquad\qquad\qquad\qquad\qquad\qquad-\frac{4}{3}\varphi_{\xi^2}+2\epsilon \varphi\varphi_{\xi^3}
    -3\epsilon \varphi_{\xi^4}.\label{heat-kt}
    \end{align}

 \subsection{Solitons of the heat flow}
    In general, a soliton is defined as a solution of an evolution
    equation that evolves along symmetries of the equation.
    Let us illustrate what we mean by ``evolving along symmetries''.  Taking the ordinary one-dimensional heat equation $f_t=f_{xx}$ as an example,
    we can see that the following vector fields are infinitesimal
    symmetries of this equation, that is, each one generates a one-parameter
    group of transformations which transform solutions to solutions:
    \begin{gather*}
    \begin{array}{lll}
      &{\displaystyle X_1=\frac{\partial}{\partial x}},\qquad &\text{translation in space}\vspace{0.5ex} \\
      &{\displaystyle X_2=\frac{\partial}{\partial t}},\qquad &\text{translation in time}\vspace{0.5ex} \\
      &{\displaystyle X_3=f\frac{\partial}{\partial f}},\qquad &\text{scaling in }f\vspace{0.5ex} \\
      &{\displaystyle X_4=x\frac{\partial}{\partial x}+2t\frac{\partial}{\partial t}},\qquad &\text{scaling in space and time}.\\
    \end{array}
    \end{gather*}

    Another evolution equation is CSF, $\displaystyle \frac{\partial C}{\partial t}=\kappa N$, where $C(p,t)$ is a
    one-parameter family of immersed plane curves with curvature $\kappa$ and unit normal vector $N$. It is well known that the symmetries of this flow are the rigid motions of the
    plane, translation in time, and simultaneous dilation in space and time.  Among the soliton solutions are Abresch-Langer curves, which
    evolve by rotation and dilation, and the ``Grim Reaper''  $\cos x=\exp(-y)$, which evolves by translation in the $y$-direction.

    Analogously, the symmetries of heat flow \eqref{heat-evo} are the fully affine motions of the plane, translation and simultaneous dilation in time.
    Then the soliton of heat flow \eqref{heat-evo} is a family of curves $\widehat{C}$ of the form
    \begin{equation}\label{soliton-form}
      \widehat{C}(p,t)=l(t)A(t)C(p)+H(t),
    \end{equation}
    where $I$ is an interval containing $0$, $l:I\rightarrow\R$, $A:I\rightarrow\R^{2\times2}$,  $H:I\rightarrow\R^2$ are differentiable and the determinant of $A(t)$ is $1$ such that
    $l(0)=1, H(0)=0,$ $A(0)$ is the identity matrix, and hence $\widehat{C}(p,0)=C(p)$.
    The function $l$ determines the scaling, $A$ determines the area-preserving deformation and $H$ is the translation term.

    By \eqref{affine-arc}, \eqref{affine-curv} and \eqref{soliton-form}, we find
    \begin{equation*}
     g(p,t)=g(p,0),\qquad \varphi(p,t)=\varphi(p,0), \quad \forall t
    \end{equation*}
    which implies
    \begin{equation}\label{gk0}
     \frac{\partial g}{\partial t}\equiv0,\qquad \frac{\partial\varphi}{\partial t}\equiv0.
    \end{equation}
    According to \eqref{heat-gt} and \eqref{heat-kt}, we find the  solutions for \eqref{gk0} include the case $\varphi$ is constant.
    In the following examples, we list the motions of the curves with constant fully affine curvature under the flow \eqref{heat-evo}. Throughout the
    paper the superscript ``${}^{\mathrm{T}}$'' represents the transpose of a vector or matrix.
    \begin{exmp}\label{exm-elps}
    For $\varphi\equiv0$ and $\epsilon=1$,  the initial curve $C_0$ should be an ellipse. Assume $C_0=(a_0\cos\theta, b_0\sin\theta)^{\mathrm{T}}+(x_0,y_0)^{\mathrm{T}}$.
    The solution for \eqref{heat-evo} is
    \begin{equation*}
      C(\theta,t)=\exp(-t/9)(a_0\cos\theta, b_0\sin\theta)^{\mathrm{T}}+(x_0,y_0)^{\mathrm{T}}.
    \end{equation*}
    This is a shrinking  soliton.
    \end{exmp}
    \begin{exmp} If the initial curve $C_0$ is  hyperbola $(a_0\cosh\theta, b_0\sinh\theta)^{\mathrm{T}}+(x_0,y_0)^{\mathrm{T}}$, which implies
    $\varphi\equiv0$ and $\epsilon=-1$. Then we have
    \begin{equation*}
      C(\theta,t)=\exp(t/9)(a_0\cosh\theta, b_0\sinh\theta)^{\mathrm{T}}+(x_0,y_0)^{\mathrm{T}},
    \end{equation*}
    and it is an expanding soliton.
    \end{exmp}
    \begin{exmp}Assume the initial curve $C_0=(2\theta, \theta\log(\theta))^{\mathrm{T}}+(x_0,y_0)^{\mathrm{T}}$, that is, $\varphi\equiv2$ and $\epsilon=1$. The solution for \eqref{heat-evo} is
    \begin{equation*}
      C(\theta,t)=\exp(t)(2\theta, 2t\theta+\theta\log(\theta))^{\mathrm{T}}+(x_0,y_0)^{\mathrm{T}},
    \end{equation*}
    which is an expanding soliton.
    \end{exmp}
    \begin{exmp}For $\varphi\equiv\frac{|\alpha+1|}{|\alpha-1|^{1/2}|2\alpha-1|^{1/2}}$ and $\epsilon=-\text{sgn}\Big((2\alpha-1)(\alpha-2)\Big)$, we have the solution
    \begin{small}
    \begin{equation*}
      C(\theta,t)=\left(\theta\exp\left(\frac{t}{|2\alpha^2-5\alpha+2|}\right), \theta^\alpha\exp\left(\frac{\alpha^2t}{|2\alpha^2-5\alpha+2|}\right)\right)^{\mathrm{T}}+(x_0,y_0)^{\mathrm{T}},
    \end{equation*}
    \end{small}%
    where $\displaystyle \theta>0$, $\alpha$ is a constant and $\displaystyle\alpha\notin\{0,\frac{1}{2},1, 2\}$, with the initial curve $\displaystyle C_0=\left(\theta,\theta^\alpha\right)^{\mathrm{T}}+(x_0,y_0)^{\mathrm{T}}$.
    \end{exmp}
    \begin{exmp}For $\varphi\equiv2\sin\beta \left(\beta\in(0,\pi/2)\right)$ and $\epsilon=1$,  we have the solution
    \begin{small}
    \begin{equation*}
      C(\theta,t)=\exp\left(\alpha\theta+\frac{\alpha^2-1}{\alpha^2+9}t\right)\left(\sin\left(\theta+\frac{2\alpha t}{\alpha^2+9}\right),-\cos\left(\theta+\frac{2\alpha t}{\alpha^2+9}\right)\right)^{\mathrm{T}},
    \end{equation*}
    \end{small}%
    where $\alpha=3\tan\beta$, with the initial curve $C_0=\exp\left(\alpha\theta\right)\left(\sin\theta,-\cos\theta\right)^{\mathrm{T}}$.
    \end{exmp}

    \begin{proof}[The proof of Theorem \ref{thm-slt}]
    By \cite{lt}, $C=C(\xi)$ is a local embedding of a soliton which moves away from a central point $P$ or moves along a vector $A$ at a given point in time
    if and only if
    \begin{equation}\label{eq-slt}
       \partial_tC=C_{\xi^2}=a(C-P)+f(\xi)C_{\xi}
    \end{equation}
    or
    \begin{equation}\label{eq-slt1}
       \partial_tC=C_{\xi^2}=A+f(\xi)C_{\xi}
    \end{equation}
    where $a$ is a constant and $f(\xi)C_{\xi}$ is a tangent vector field.
    If $a>0$ (or $a<0$) in \eqref{eq-slt}, then it is called expanding (or shrinking) soliton.
    It is called  translating soliton for \eqref{eq-slt1}.

    By differentiating both sides with respect to $\xi$, \eqref{eq-slt} and \eqref{eq-slt1} lead to
    \begin{equation*}
       C_{\xi^3}=aC_{\xi}+f_{\xi}C_{\xi}+fC_{\xi^2}.
    \end{equation*}
    Comparing with \eqref{c3xi}, we see that
    \begin{equation*}
     f=-\varphi,\qquad a+f_\xi=-\lambda=-\frac{2\varphi^2+3\varphi_\xi+\epsilon}{9},
    \end{equation*}
    which generate an ordinary differential equation
    \begin{equation*}
      2\varphi^2-6\varphi_\xi+9a+\epsilon=0.
    \end{equation*}

    If $\varphi$ is constant, then we have $\displaystyle \varphi^2=-\frac{9a+\epsilon}{2}$.
    Let $\displaystyle A^2=\left|\frac{9a+\epsilon}{2}\right|$.
    If $\varphi$ is not constant,
    by solving the ordinary differential equation, we have
    $\displaystyle \varphi=-\frac{3}{\xi}$,
    $\displaystyle \varphi=A\tan\left(\frac{A}{3}\xi\right)$, $\displaystyle \varphi=-A\cot\left(\frac{A}{3}\xi\right)$,
    $\displaystyle \varphi=-A\tanh\left(\frac{A}{3}\xi\right)$ or $\displaystyle \varphi=-A\coth\left(\frac{A}{3}\xi\right)$.

    On the other hand, substituting $U=1, W=\varphi$ into \eqref{gtog} and \eqref{vaphi-evo} generates
    \begin{align}
    \begin{split}\label{eq-U1Wvph}
       \frac{g_t}{g}&=\;\frac{\epsilon}{9}(-9\varphi_{\xi^3}+3\varphi\varphi_{\xi^2}+6\varphi_{\xi}^2+2\varphi^2\varphi_{\xi}),\\
       \varphi_t&=\;-3\epsilon\varphi_{\xi^4}+2\epsilon\varphi\varphi_{\xi^3}\\
       &\qquad\qquad+\frac{\epsilon\varphi^2+15\epsilon\varphi_{\xi}-4}{3}\varphi_{\xi^2}-\frac{2\varphi(\epsilon\varphi^2-3\epsilon\varphi_{\xi}-4)}{9}\varphi_{\xi}.
    \end{split}
    \end{align}
    If $g_t\equiv0$ in \eqref{eq-U1Wvph}, we obtain
    \begin{align*}
      \varphi_{\xi^3}&=\;\frac{1}{9}(3\varphi\varphi_{\xi^2}+6\varphi_{\xi}^2+2\varphi^2\varphi_{\xi}),\\
      \varphi_{\xi^4}&=\;\frac{\varphi^2+5\varphi_{\xi}}{3}\varphi_{\xi^2}+\frac{2\varphi(\varphi^2+9\varphi_{\xi})}{27}\varphi_{\xi}.
    \end{align*}
    By substituting these two equations into the above representation of $\varphi_t$ in \eqref{eq-U1Wvph}, we obtain
    \begin{equation*}
      \varphi_t=-\frac{4}{3}\left(\varphi_{\xi^2}-\frac{2}{3}\varphi\varphi_{\xi}\right).
    \end{equation*}
    Substituting $\displaystyle \varphi_{\xi^2}=\frac{2}{3}\varphi\varphi_{\xi}$ into \eqref{eq-U1Wvph} gives $g_t\equiv0$ and $\varphi_t\equiv0$.

    A direct integration shows $\displaystyle \varphi_{\xi^2}=\frac{2}{3}\varphi\varphi_{\xi}$ is equivalent to
    $\varphi$ is constant,
    $\displaystyle \varphi=-\frac{3}{\xi}$,
    $\displaystyle \varphi=A\tan\left(\frac{A}{3}\xi\right)$, $\displaystyle \varphi=-A\cot\left(\frac{A}{3}\xi\right)$,
    $\displaystyle \varphi=-A\tanh\left(\frac{A}{3}\xi\right)$ or $\displaystyle \varphi=-A\coth\left(\frac{A}{3}\xi\right)$
    for some nonzero constant $A$.

    If a soliton is closed, for $U=1$ and arbitrary $W$,   \eqref{gtog} and \eqref{vaphi-evo} produce
    \begin{align*}
    \frac{g_t}{g}&=\frac{1}{9}(2\varphi^2\varphi_{\xi}+3\varphi\varphi_{\xi^2}+6\varphi_{\xi}^2-9\varphi_{\xi}-9\varphi_{\xi^3})+W_{\xi},\\
    \varphi_t&=-\frac{2}{9} \varphi^3\varphi_{\xi}+\frac{2}{3} \varphi\varphi^2_{\xi}-\frac{1}{9}\varphi\varphi_{\xi}+\frac{1}{3}\varphi^2\varphi_{\xi^2}+5 \varphi_{\xi}\varphi_{\xi^2}\nonumber\\
    &\qquad\qquad\qquad\qquad\qquad\qquad-\frac{4}{3}\varphi_{\xi^2}+2 \varphi\varphi_{\xi^3}
    -3 \varphi_{\xi^4}+W\varphi_{\xi}.
    \end{align*}
    Then integrating $\displaystyle \frac{g_t}{g}$ on a closed curve $C$ gives
    \begin{equation*}
      \int_{C}\frac{g_t}{g}d\xi=\frac{1}{3}\int_{C}\varphi_{\xi}^2d\xi.
    \end{equation*}
    Hence, $g_t\equiv0$ and $\varphi_t\equiv0$ imply $\varphi$ is constant, that is, this closed soliton is an ellipse.
    The proof of Theorem \ref{thm-slt} is completed.
    \end{proof}

 \subsection{Energy estimates for the heat flow}
    In the following, we proceed to derive some energy inequalities for a family of closed curves $C(p,t)$ evolving according to the heat flow \eqref{heat-evo}.
    According to Lemma \ref{lem-ep1}, it is easy to see $\epsilon=1$. Then according to Section \ref{invariant-group}, by a trivial computation, we have
    \begin{equation}\label{af-ea}
    C_{\xi^2}=\frac{1}{9\mu}C_{\sigma^2}-\frac{\mu_{\sigma}}{18\mu^2}C_{\sigma},
    \end{equation}
    where $\sigma$ and $\mu$ are equi-affine arc length parameter and equi-affine curvature.
    Similarly, a direct computation yields
    \begin{equation*}
      C_{\sigma^2}=\kappa^{1/3}N-\frac{\kappa_s}{3\kappa^{5/3}}T,
    \end{equation*}
    where $s$ and $\kappa$ are Euclidean arc length parameter and Euclidean curvature.
    The tangential component of the velocity vector affects only the parametrization of
    the family of curves in the evolution, not their shape. So the existence of the family of curves
    is determined by the normal component of the velocity in Euclidean setting
    \begin{gather*}
       C_t=\frac{1}{9\kappa-5\kappa^{-3}\kappa^2_s+3\kappa^{-2}\kappa_{ss}}N,\qquad
       C(\cdot,~0)=C_0(\cdot).
    \end{gather*}
    Now we discuss the local existence and uniqueness for \eqref{heat-evo} in equi-affine setting.
    In view of \eqref{af-ea}, \eqref{heat-evo} can be rewritten as
    \begin{equation*}
       \frac{\partial C(p,~t)}{\partial t}=\frac{1}{9\mu}C_{\sigma^2}-\frac{\mu_{\sigma}}{18\mu^2}C_{\sigma},  \qquad C(\cdot,~0)=C_0(\cdot),
    \end{equation*}
    which is equivalent to choosing $\displaystyle \beta=\frac{1}{9\mu}, \quad \alpha=\frac{\sigma^2-9\beta_{\sigma}}{27}$ in \eqref{eamotion}.
    Thus, \eqref{eqgtog} and \eqref{eqkt} can be represented as
    \begin{equation*}
     \bar{g}_t=0
    \end{equation*}
    and
    \begin{align}\label{ea4order}
    \mu_t\;=&-\frac{1}{27\mu^2}\mu_{\sigma^4}+\frac{8}{27\mu^3}\mu_{\sigma}\mu_{\sigma^3}+\frac{2}{9\mu^3}\mu_{\sigma^2}^2-\frac{4}{27\mu^4}\left(9\mu_{\sigma}^2+\mu^3\right)\mu_{\sigma^2}\nonumber\\
    &\qquad\qquad\qquad\qquad\qquad+\frac{8\mu_{\sigma}^4}{9\mu^5}+\frac{2\mu_{\sigma}^2}{9\mu^2}+\frac{2}{27}(\sigma\mu_{\sigma}+2\mu).
    \end{align}
    This implies, in \eqref{ea4order}, $\displaystyle \frac{\partial}{\partial t}\frac{\partial}{\partial \sigma}=\frac{\partial}{\partial \sigma}\frac{\partial}{\partial t}$. According to Theorem \ref{lu-4order},
    there exists a unique solution of \eqref{ea4order} in some $[0,t_1), t_1>0$ where $t_1$ depends on the $H^4$-norm of $\mu_0$.
    The equi-affine curvature $\mu$ defines, up to an equi-affine transformation, a unique curve \cite{ns}.
    Hence,
    \begin{thm}[Local existence and uniqueness]\label{thm-exst}
    For the curve flow \eqref{heat-evo} with $\epsilon=1$, there exists a unique solution in an interval $[0,t_1),\;t_1>0$, where $t_1$ depends on the $H^4$-norm of the equi-affine curvature of the initial curve $C_0$.
    \end{thm}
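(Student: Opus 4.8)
The plan is to reduce the geometric evolution \eqref{heat-evo} to a single scalar quasilinear equation for the equi-affine curvature $\mu$, namely the fourth-order equation \eqref{ea4order}, to invoke the black-box local existence theory for fourth-order parabolic equations (Theorem \ref{lu-4order} of \cite{gi}), and finally to reconstruct the moving curve from $\mu$ by the fundamental theorem of equi-affine curve theory \cite{ns}. The crux of the argument is therefore twofold: first, to record the flow \eqref{heat-evo} faithfully as an evolution of $\mu$; and second, to verify that \eqref{ea4order} genuinely falls within the parabolic class to which Theorem \ref{lu-4order} applies.

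For the reduction I would start from the identity \eqref{af-ea}, which rewrites $C_{\xi^2}$ in terms of the equi-affine frame $\{C_\sigma, C_{\sigma^2}\}$, so that the fully affine heat flow \eqref{heat-evo} becomes an equi-affine motion with the specific coefficients $\beta=\tfrac{1}{9\mu}$ and $\alpha=\tfrac{\sigma^2-9\beta_\sigma}{27}$ in the general motion \eqref{eamotion}. Feeding these into the evolution formulas \eqref{eqgtog} and \eqref{eqkt} yields $\bar g_t=0$ (the equi-affine arc length element is frozen in time, so that $\tfrac{\partial}{\partial t}\tfrac{\partial}{\partial\sigma}=\tfrac{\partial}{\partial\sigma}\tfrac{\partial}{\partial t}$) together with the closed fourth-order equation \eqref{ea4order} for $\mu$. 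Because a tangential velocity only reparametrizes the family and does not alter its trace, this scalar equation captures the entire geometric content of \eqref{heat-evo}.

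Next I would check parabolicity. By Lemma \ref{lem-ep1}, the hypothesis $\epsilon=1$ forces $\mu>0$ on the initial convex curve, so the leading coefficient $-\tfrac{1}{27\mu^2}$ of $\mu_{\sigma^4}$ in \eqref{ea4order} is strictly negative; writing $a(\mu)=\tfrac{1}{27\mu^2}>0$, equation \eqref{ea4order} has the form $\mu_t=-a(\mu)\,\mu_{\sigma^4}+\cdots$, whose principal symbol $-a\,k^4$ decays, so it is (uniformly, on any time interval where $\mu$ stays positive and bounded) a forward-parabolic equation. The remaining coefficients are rational functions of $\mu$ and its $\sigma$-derivatives, hence smooth and locally Lipschitz where $\mu>0$, so the structural hypotheses of Theorem \ref{lu-4order} are met. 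Applying that theorem produces a unique solution $\mu(\cdot,t)$ on an interval $[0,t_1)$ with $t_1$ controlled by $\|\mu_0\|_{H^4}$.

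Finally, the fundamental theorem of equi-affine plane curves \cite{ns} asserts that $\mu(\cdot,t)$ determines a curve $C(\cdot,t)$ uniquely up to an equi-affine transformation; tracking this reconstruction in $t$ and fixing the residual equi-affine freedom by prescribing a frame at one point, as in the initial data, yields a unique family $C(\cdot,t)$ solving \eqref{heat-evo}. I expect the main obstacle to lie not in the PDE theory but in the consistency of this last step: one must confirm that the curve built from the scalar solution $\mu$ actually realizes \eqref{heat-evo}, which requires reconciling the tangential reparametrization introduced by the coefficient $\alpha$ and ensuring that $\mu$ remains positive throughout $[0,t_1)$ so that neither the equi-affine framework nor the parabolicity of \eqref{ea4order} degenerates. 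Positivity is guaranteed for short time by continuity from $\mu_0>0$, which suffices for the local statement.
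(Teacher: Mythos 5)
Your proposal is correct and follows essentially the same route as the paper: rewrite \eqref{heat-evo} via \eqref{af-ea} as the equi-affine motion \eqref{eamotion} with $\beta=\tfrac{1}{9\mu}$, deduce $\bar g_t=0$ and the scalar fourth-order equation \eqref{ea4order}, apply Theorem \ref{lu-4order} to get a unique $\mu$ on $[0,t_1)$ with $t_1$ controlled by $\|\mu_0\|_{H^4}$, and reconstruct the curve by the fundamental theorem of equi-affine curves \cite{ns}. Your explicit verification of parabolicity (positivity of $\mu$ from Lemma \ref{lem-ep1} and hence of the leading coefficient $\tfrac{1}{27\mu^2}$) is a detail the paper leaves implicit, but it is the same argument, not a different one.
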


    Assume that $P^n_m(\varphi)$ is any linear combination of terms of the type $\partial^{i_1}\varphi*\cdots*\partial^{i_m}\varphi$ with universal constant coefficients, and $n=i_1+\cdots+i_m$ is the total number of derivatives.
    With these notations, we could rewrite the evolution equation of the curvature $\varphi$ and metric $g$ as
    \begin{align*}
    \frac{g_t}{g}&=\;P^3_1(\varphi)+P_2^2(\varphi)+P_3^1(\varphi)+P_1^1(\varphi),\\
    \varphi_t&=\;-3\varphi_{\xi^5}+P_2^3(\varphi)+P_3^2(\varphi)+P_1^2(\varphi)+P_4^1(\varphi)+P_2^1(\varphi).
    \end{align*}
    Utilizing \eqref{vaphi-evo}, \eqref{varphi1t}, \eqref{varphi2t}, \eqref{varphi3t} and $\displaystyle \left(\frac{\partial^k\varphi}{\partial\xi^k}\right)_t=\left(\frac{\partial^{k-1}\varphi}{\partial\xi^{k-1}}\right)_{t\xi}-\;\frac{g_t}{g}\frac{\partial^{k}\varphi}{\partial\xi^{k}}$, we find
    \begin{lem}\label{lem-varphikt}
      The evolutions of $\varphi_{k\xi}$ are given by
      \begin{equation*}
        \left(\varphi_{k\xi}\right)_t=-3\varphi_{\xi^{k+4}}+P_2^{k+3}(\varphi)+P_3^{k+2}(\varphi)+P_1^{k+2}(\varphi)+P_4^{k+1}(\varphi)+P_2^{k+1}(\varphi).
      \end{equation*}
    \end{lem}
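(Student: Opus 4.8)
The plan is to prove the identity by induction on $k$ from the commutation relation stated just above the lemma, reducing everything to two elementary algebraic rules for the symbols $P^n_m$. First I would record these rules. By the Leibniz rule, differentiating a sum of $m$-fold products of derivatives of $\varphi$ again produces a sum of $m$-fold products, with the total number of derivatives raised by one; hence $\partial_\xi P^n_m(\varphi)=P^{n+1}_m(\varphi)$. Directly from the definition, the product of an $m$-fold term and an $m'$-fold term is an $(m+m')$-fold term whose derivative count is the sum, so $P^n_m(\varphi)\,P^{n'}_{m'}(\varphi)=P^{n+n'}_{m+m'}(\varphi)$; moreover any linear combination with universal constant coefficients of symbols $P^n_m$ is again such a symbol. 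Finally $\varphi_{k\xi}=P^k_1(\varphi)$. These observations are the whole engine of the argument.

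For the base case $k=0$ I would simply reread \eqref{heat-kt} in this notation: its eight monomials contribute one leading term $-3\varphi_{\xi^4}$ together with terms of types $P^3_2$, $P^2_3$, $P^2_1$, $P^1_4$ and $P^1_2$, which is exactly the asserted formula at $k=0$. One may equally verify $k=1,2,3$ against \eqref{varphi1t}, \eqref{varphi2t} and \eqref{varphi3t} specialized to $U=1$, $W=0$, but the single case $k=0$ already suffices to launch the induction.

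For the inductive step, suppose the formula holds for $k-1$. Differentiating that expression in $\xi$ and applying $\partial_\xi P^n_m=P^{n+1}_m$, the leading term becomes $-3\varphi_{\xi^{k+4}}$, while $P^{k+2}_2$, $P^{k+1}_3$, $P^{k+1}_1$, $P^{k}_4$, $P^{k}_2$ pass to $P^{k+3}_2$, $P^{k+2}_3$, $P^{k+2}_1$, $P^{k+1}_4$, $P^{k+1}_2$; thus $\left(\varphi_{(k-1)\xi}\right)_{t\xi}$ already has precisely the form claimed for $\left(\varphi_{k\xi}\right)_t$. The commutation relation then requires only that I absorb the correction $-\tfrac{g_t}{g}\,\varphi_{k\xi}$. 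Reading \eqref{heat-gt} as $\tfrac{g_t}{g}=P^3_1+P^2_2+P^1_3+P^1_1$ and multiplying by $\varphi_{k\xi}=P^k_1$, the multiplication rule yields only the types $P^{k+3}_2$, $P^{k+2}_3$, $P^{k+1}_4$, $P^{k+1}_2$, each of which already occurs on the right-hand side. Combining the two contributions and invoking closure under linear combinations gives the stated formula for $k$.

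I do not expect a genuine obstacle, since the argument is pure bookkeeping; the one point demanding care is to confirm that the product $\tfrac{g_t}{g}\,\varphi_{k\xi}$ generates no symbol outside the target list $P^{k+3}_2$, $P^{k+2}_3$, $P^{k+2}_1$, $P^{k+1}_4$, $P^{k+1}_2$. Because each term of $\tfrac{g_t}{g}$ already has at least one factor, multiplying by $\varphi_{k\xi}$ forces every resulting term to have at least two factors, so this correction contributes no single-factor term and in particular cannot perturb the leading coefficient $-3$ of $\varphi_{\xi^{k+4}}$. This is exactly what the two structural rules guarantee, which is why isolating them cleanly at the outset is the heart of the proof.
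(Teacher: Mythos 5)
Your proof is correct and takes essentially the same route as the paper: the paper also rewrites the heat-flow evolutions \eqref{heat-gt} and \eqref{heat-kt} in the $P^n_m$ notation and obtains the lemma by iterating the commutation relation $\left(\varphi_{k\xi}\right)_t=\left(\varphi_{(k-1)\xi}\right)_{t\xi}-\frac{g_t}{g}\,\varphi_{k\xi}$, exactly your induction. Your explicit bookkeeping rules ($\partial_\xi P^n_m=P^{n+1}_m$ and $P^n_m\,P^{n'}_{m'}=P^{n+n'}_{m+m'}$) just make precise what the paper leaves implicit, and your base case correctly reads off $-3\varphi_{\xi^4}$ from \eqref{heat-kt} (the paper's displayed leading term $-3\varphi_{\xi^5}$ before the lemma is a typo).
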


     Then, by \eqref{heat-gt} and \eqref{heat-kt} we have
    \begin{small}
    \begin{align*}
      \frac{d}{dt}\oint_{C}\varphi^2d\xi&=\oint_{C}\big(2\varphi\varphi_t+\varphi^2\frac{g_t}{g}\big)d\xi\\
      &=-\frac{2}{9}\oint_{C}\varphi\Big(27\varphi_{\xi^4}-\frac{27}{2}\varphi\varphi_{\xi^3}-3(\frac{3}{2}\varphi^2+15\varphi_{\xi}-4)\varphi_{\xi^2}\\
      &\qquad\qquad\qquad\qquad\qquad\qquad\qquad+(\varphi^2-9\varphi_{\xi}+\frac{11}{2})\varphi\varphi_{\xi}\Big)d\xi\\
      &=-\frac{2}{9}\oint_{C}9\varphi\varphi_{\xi}^2+27\varphi_{\xi^2}^2+9\varphi_{\xi}^3-12\varphi_{\xi}^2-9\varphi^2\varphi_{\xi}^2d\xi\\
      &=-6\oint_{C}\varphi_{\xi^2}^2d\xi-2\oint_{C}\varphi\varphi_{\xi}^2d\xi-2\oint_{C}\varphi_{\xi}^3d\xi+2\oint_{C}\varphi^2\varphi_{\xi}^2d\xi+\frac{8}{3}\oint_{C}\varphi_{\xi}^2d\xi.
    \end{align*}
    \end{small}%
    Since $\displaystyle \oint_{C}\varphi d\xi=0$,  we shall use the interpolation inequalities: For periodic function $u$ with zero mean,
    \begin{equation*}
      ||u^{(j)}||_{L^r}\leq D||u||^{1-\theta}_{L^p}\cdot||u^{(k)}||^{\theta}_{L^q},\quad \theta\in(0,1),
    \end{equation*}
    where $r,q,p,j$ and $k$ satisfy $p,q,r>1$, $j\geq0$,
    \begin{equation*}
      \frac{1}{r}=j+\theta(\frac{1}{q}-k)+(1-\theta)\frac{1}{p},
    \end{equation*}
    and
    \begin{equation*}
      \frac{j}{k}\leq\theta\leq1.
    \end{equation*}
    Here the constant $D$ depends on $r,p,q,j$ and $k$ only.
    Using this interpolation inequality, we have
    \begin{equation*}
     \Big(\oint_{C}\varphi^4d\xi\Big)^{\frac{1}{2}}\leq D_1\Big(\oint_{C}\varphi^2d\xi\Big)^{\frac{7}{8}}\Big(\oint_{C}\varphi_{\xi^2}^2d\xi\Big)^{\frac{1}{8}},
    \end{equation*}
    and
    \begin{equation*}
    \Big(\oint_{C}\varphi_{\xi}^4d\xi\Big)^{\frac{1}{2}}\leq D_2\Big(\oint_{C}\varphi^2d\xi\Big)^{\frac{3}{8}}\Big(\oint_{C}\varphi_{\xi^2}^2d\xi\Big)^{\frac{5}{8}}.
    \end{equation*}
    Therefore,
    \begin{small}
    \begin{align*}
    \oint_{C}\varphi^2\varphi_{\xi}^2d\xi&\leq\Big(\oint_{C}\varphi^4d\xi\Big)^{\frac{1}{2}}\Big(\oint_{C}\varphi_{\xi}^4d\xi\Big)^{\frac{1}{2}}\\
    &\leq D_3\Big(\oint_{C}\varphi^2d\xi\Big)^{\frac{5}{4}}\Big(\oint_{C}\varphi_{\xi^2}^2d\xi\Big)^{\frac{3}{4}}\\
    &\leq \eta_1\oint_{C}\varphi_{\xi^2}^2d\xi+\frac{27}{256}D_3^4\eta_1^{-3}\Big(\oint_{C}\varphi^2d\xi\Big)^5.
    \end{align*}
    \end{small}%
    \begin{small}
    \begin{align*}
    \left|\oint_{C}\varphi\varphi_{\xi}^2d\xi\right|&\leq\Big(\oint_{C}\varphi^2d\xi\Big)^{\frac{1}{2}}\Big(\oint_{C}\varphi_{\xi}^4d\xi\Big)^{\frac{1}{2}}\\
    &\leq D_4\Big(\oint_{C}\varphi^2d\xi\Big)^{\frac{7}{8}}\Big(\oint_{C}\varphi_{\xi^2}^2d\xi\Big)^{\frac{5}{8}}\\
    &\leq \eta_2\oint_{C}\varphi_{\xi^2}^2d\xi+\frac{3}{8}\left(\frac{5}{8}\right)^{\frac{5}{3}}D_4^{\frac{8}{3}}\eta_2^{-\frac{5}{3}}\Big(\oint_{C}\varphi^2d\xi\Big)^{\frac{7}{3}}.
    \end{align*}
    \end{small}%
    \begin{small}
    \begin{align*}
    \oint_{C}\varphi_{\xi}^2d\xi&\leq D_5\Big(\oint_{C}\varphi^2d\xi\Big)^{\frac{1}{2}}\Big(\oint_{C}\varphi_{\xi^2}^2d\xi\Big)^{\frac{1}{2}}\\
    &\leq \eta_3\oint_{C}\varphi_{\xi^2}^2d\xi+\frac{1}{4}D_5^2\eta_3^{-1}\oint_{C}\varphi^2d\xi.
    \end{align*}
    \end{small}%
    \begin{small}
    \begin{align*}
    \left|\oint_{C}\varphi_{\xi}^3d\xi\right|& \leq\Big(\oint_{C}\varphi_{\xi}^2d\xi\Big)^{\frac{1}{2}}\Big(\oint_{C}\varphi_{\xi}^4d\xi\Big)^{\frac{1}{2}}\\
    &\leq D_2D_5^{\frac{1}{2}}\Big(\oint_{C}\varphi^2d\xi\Big)^{\frac{5}{8}}\Big(\oint_{C}\varphi_{\xi^2}^2d\xi\Big)^{\frac{7}{8}}\\
    &\leq \eta_4\oint_{C}\varphi_{\xi^2}^2d\xi+\frac{7^7}{8^8}D_2^8D_5^4\eta_4^{-7}\Big(\oint_{C}\varphi^2d\xi\Big)^{5}.
    \end{align*}
    \end{small}%

    Hence, we obtain
    \begin{align*}
    \frac{d}{dt}\oint_{C}\varphi^2d\xi&\leq(2\eta_1+2\eta_2+\frac{8}{3}\eta_3+2\eta_4-6)\oint_{C}\varphi_{\xi^2}^2d\xi+p(E),
    \end{align*}
    where
    \begin{equation*}
    p(E)=\frac{27}{128}D_3^4\eta_1^{-3}E^5+\frac{3}{4}\left(\frac{5}{8\eta_2}\right)^{\frac{5}{3}}D_4^{\frac{8}{3}}E^{\frac{7}{3}}+\frac{2}{3}D_5^2\eta_3^{-1}E+\frac{1}{4}\left(\frac{7}{8\eta_4}\right)^7D_2^8D_5^4E^5,
    \end{equation*}
    and
    \begin{equation*}
    E=\oint_{C}\varphi^2d\xi.
    \end{equation*}
    By choosing $\eta_i$, $i=1,2,3,4$ so that $2\eta_1+2\eta_2+\frac{8}{3}\eta_3+2\eta_4=6$, we see
    \begin{equation}\label{1d-E}
      \frac{dE}{dt}\leq C_1(E+E^5),
    \end{equation}
    where $C_1$ is a constant.
    Using the similar proof to that of Proposition 2.5 in \cite{dks}, we may deduce
    \begin{lem}\label{lem-dks}
     Let $C:I\rightarrow\R^2$ be a smooth closed curve. For any $P^\mu_{\nu}(\varphi)$ with $\nu\geq2$ which includes only derivatives of $\varphi$ of order at most $l-1$ and $\gamma=(\mu+\frac{1}{2}\nu-1)/l<2$, one has,
       $\forall \eta>0$
    \begin{equation*}
    \oint_C|P^{\mu}_{\nu}(\varphi)|d\xi\leq\eta\oint_C|\partial^l_{\xi}\varphi|^2d\xi+c\eta^{-\frac{\gamma}{2-\gamma}}\Big(\oint_C\varphi^2d\xi\Big)^{\frac{\nu-\gamma}{2-\gamma}}+c_2\Big(\oint_C\varphi^2d\xi\Big)^{\nu+\mu-1},
    \end{equation*}
    where $c$ and $c_2$ are constant and depend only $l,\mu$ and $\nu$.
    \end{lem}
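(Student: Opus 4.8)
The plan is to follow the interpolation scheme of Dziuk--Kuwert--Schätzle \cite{dks}, adapted to the fully affine arc length parameter $\xi$. The decisive structural input is furnished by Lemma \ref{lem-ep1}: for a closed curve $\varphi$ has zero mean, $\oint_C\varphi\,d\xi=0$, and moreover every derivative $\partial^m_\xi\varphi$ with $m\ge1$ is itself the derivative of a $\xi$-periodic function and hence also has zero mean. Thus the Gagliardo--Nirenberg interpolation inequalities quoted immediately before the statement are applicable to $\varphi$ and to each of its derivatives. By the linearity of $P^\mu_\nu(\varphi)$ and the triangle inequality, it suffices to estimate $\oint_C|Q|\,d\xi$ for a single monomial $Q=\partial^{i_1}_\xi\varphi\cdots\partial^{i_\nu}_\xi\varphi$ with $0\le i_j\le l-1$ and $i_1+\cdots+i_\nu=\mu$.

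First I would apply Hölder's inequality with exponents $p_1,\dots,p_\nu$ satisfying $\sum_j p_j^{-1}=1$, giving $\oint_C|Q|\,d\xi\le\prod_j\|\partial^{i_j}_\xi\varphi\|_{L^{p_j}}$, and then estimate each factor by the interpolation inequality in the form $\|\partial^{i_j}_\xi\varphi\|_{L^{p_j}}\le D\|\varphi\|_{L^2}^{1-\theta_j}\|\partial^l_\xi\varphi\|_{L^2}^{\theta_j}$, where the admissibility relation forces $\theta_j=(i_j+\tfrac12-p_j^{-1})/l$. Choosing each $p_j\ge2$ (which is possible precisely because $\nu\ge2$, so that $\sum_j p_j^{-1}=1$ can be met with all $p_j^{-1}\le\tfrac12$) makes $\theta_j\ge i_j/l$, while $i_j\le l-1$ forces $\theta_j<1$; hence every $\theta_j$ lands in its admissible window $[i_j/l,1)$. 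Summing and using $\sum_j p_j^{-1}=1$ yields $\sum_j\theta_j=(\mu+\tfrac12\nu-1)/l=\gamma$, so multiplying the factors gives
\[
\oint_C|Q|\,d\xi\le c\Big(\oint_C\varphi^2\,d\xi\Big)^{(\nu-\gamma)/2}\Big(\oint_C|\partial^l_\xi\varphi|^2\,d\xi\Big)^{\gamma/2}.
\]

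Since $\gamma<2$ we have $\gamma/2<1$, so Young's inequality $ab\le\eta\,a^{2/\gamma}+c\,\eta^{-\gamma/(2-\gamma)}b^{2/(2-\gamma)}$ applied with $a=(\oint_C|\partial^l_\xi\varphi|^2\,d\xi)^{\gamma/2}$ absorbs the top-order factor and produces exactly $\eta\oint_C|\partial^l_\xi\varphi|^2\,d\xi+c\,\eta^{-\gamma/(2-\gamma)}(\oint_C\varphi^2\,d\xi)^{(\nu-\gamma)/(2-\gamma)}$, the first two terms of the claim. The third term appears because the interpolation inequality on a closed curve is not scale invariant: it carries an additive lower-order correction proportional to $\|\varphi\|_{L^2}$. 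Propagating this correction through the Hölder product and Young's inequality contributes a pure low-order piece, and its exponent is rigidly determined: $(\oint_C\varphi^2\,d\xi)^{\nu+\mu-1}$ is the unique power of $\oint_C\varphi^2\,d\xi$ sharing the homogeneity of $\oint_C|P^\mu_\nu(\varphi)|\,d\xi$ under the scaling $\varphi\sim[\mathrm{length}]^{-1}$, $d\xi\sim[\mathrm{length}]$. (The explicit $l=2$ estimates preceding the lemma are the special cases where $\nu+\mu-1$ happens to coincide with $(\nu-\gamma)/(2-\gamma)$, so the two low-order terms merge.) The constants $c,c_2$ depend only on $l,\mu,\nu$.

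The genuinely delicate point is not the homogeneous leading behavior, which is fixed by the scaling count above, but the bookkeeping of the non-homogeneous correction: one must verify that the mixed partial sums $\sum_{j\in S}\theta_j\le\gamma<2$ arising from subsets $S$ all remain absorbable by Young's inequality, and that the fully low-order contribution collapses to the single exponent $\nu+\mu-1$ rather than a family of larger mixed powers. This is exactly the content handled in the proof of Proposition 2.5 in \cite{dks}, which is why the present statement follows by the same argument transported to the fully affine setting; if a uniform control of the curve's length is needed to render the correction constants universal, it is supplied by the fully affine isoperimetric bound $\oint_C d\xi\le 6\pi$ of Theorem \ref{thm-faii}.
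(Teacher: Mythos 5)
Your main chain --- reduction to a single monomial, H\"older with $\sum_j p_j^{-1}=1$, the zero-mean Gagliardo--Nirenberg inequality with $\theta_j=(i_j+\tfrac12-p_j^{-1})/l$, the identity $\sum_j\theta_j=\gamma$, and weighted Young's inequality --- is exactly the argument of Proposition 2.5 of \cite{dks} transported to the fully affine arc length, which is precisely what the paper does (its ``proof'' is a one-line citation of \cite{dks}). This part is rigorous here because Lemma \ref{lem-ep1} gives $\oint_C\varphi\,d\xi=0$, so the correction-free interpolation inequality quoted before the lemma applies with a constant independent of the fully affine length, and your computation already yields $\oint_C|P^{\mu}_{\nu}(\varphi)|\,d\xi\le\eta\oint_C|\partial^l_{\xi}\varphi|^2d\xi+c\,\eta^{-\gamma/(2-\gamma)}\big(\oint_C\varphi^2d\xi\big)^{(\nu-\gamma)/(2-\gamma)}$; this trivially implies the stated inequality, since the third term on the right is non-negative.

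The one flaw is your account of that third term, which is internally inconsistent and, taken at face value, would be a gap. You first invoke the interpolation inequality \emph{without} any correction term, and then assert that on a closed curve it ``carries an additive lower-order correction proportional to $\|\varphi\|_{L^2}$'' whose propagation produces $c_2\big(\oint_C\varphi^2d\xi\big)^{\nu+\mu-1}$. In \cite{dks} such lower-order contributions genuinely occur (their scale-invariant norms carry length weights, and the Euclidean curvature enjoys no zero-mean reduction), and they are converted into the power $\mu+\nu-1$ by bounding $L^{-1}\le c\oint\kappa^2\,ds$ via Fenchel's theorem. That conversion has no fully affine analogue: an ellipse has $\varphi\equiv0$ while its fully affine length equals $6\pi$, so no inequality of the form $L^{-1}\le c\oint_C\varphi^2\,d\xi$ can hold, and deferring this bookkeeping to \cite{dks} would not work here. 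Fortunately none of it is needed: in the zero-mean setting no correction arises, the term with exponent $\nu+\mu-1$ is a redundant addend retained from the format of \cite{dks}, and your proof is complete (and coincides with the paper's intended one) once the paragraph purporting to derive the third term is discarded.
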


    By Lemma \ref{lem-varphikt}, it follows
    \begin{small}
    \begin{align*}
      \frac{d}{dt}\oint_{C}\left(\frac{d^n\varphi}{d\xi^n}\right)^2d\xi&=\oint_{C}2\varphi_{\xi^n}\left(\varphi_{\xi^n}\right)_t+\varphi_{\xi^n}^2\frac{g_t}{g}d\xi\\
      &=-6\oint_{C}\left(\varphi_{\xi^{n+2}}\right)^2d\xi
      +\oint_{C}\Big(P_3^{2n+3}(\varphi)+P_4^{2n+2}(\varphi)\nonumber\\
      &\qquad\qquad\qquad\qquad\qquad+P_2^{2n+2}(\varphi)+P_5^{2n+1}(\varphi)+P_3^{2n+1}(\varphi)\Big)d\xi.
    \end{align*}
    \end{small}%
    Let $l=n+2$, by Lemma \ref{lem-dks}, we have the estimates
    \begin{align*}
    \oint_C|P^{2n+3}_{3}(\varphi)|d\xi&\leq\,\eta\oint_C\left(\varphi_{\xi^{n+2}}\right)^2d\xi+(c\eta^{-4n-7}+c_2)\Big(\oint_C\varphi^2d\xi\Big)^{2n+5},\\
    \oint_C|P^{2n+2}_{4}(\varphi)|d\xi&\leq\,\eta\oint_C\left(\varphi_{\xi^{n+2}}\right)^2d\xi+(c\eta^{-2n-3}+c_2)\Big(\oint_C\varphi^2d\xi\Big)^{2n+5},\\
    \oint_C|P^{2n+2}_{2}(\varphi)|d\xi&\leq\,\eta\oint_C\left(\varphi_{\xi^{n+2}}\right)^2d\xi+c\eta^{-n-1}\oint_C\varphi^2d\xi+c_2\Big(\oint_C\varphi^2d\xi\Big)^{2n+3},\\
    \oint_C|P^{2n+1}_{5}(\varphi)|d\xi&\leq\,\eta\oint_C\left(\varphi_{\xi^{n+2}}\right)^2d\xi+(c\eta^{-\frac{4n+5}{3}}+c_2)\Big(\oint_C\varphi^2d\xi\Big)^{2n+5},\\
    \oint_C|P^{2n+1}_{3}(\varphi)|d\xi&\leq\,\eta\oint_C\left(\varphi_{\xi^{n+2}}\right)^2d\xi+c\eta^{-\frac{4n+3}{5}}\Big(\oint_C\varphi^2d\xi\Big)^{\frac{2n+9}{5}}\\
    &\qquad\qquad\qquad\qquad\qquad\qquad\quad+c_2\Big(\oint_C\varphi^2d\xi\Big)^{2n+3}.
    \end{align*}
    Using the similar procedure as the previous part, we may derive
    \begin{prop}For the fully affine heat flow, the following inequality holds.
    \begin{equation}\label{nd-E}
    \frac{d}{dt}\oint_{C}\left(\frac{d^n\varphi}{d\xi^n}\right)^2d\xi\leq D(E+E^{2n+5}),
    \end{equation}
    for some constant $D$.
    \end{prop}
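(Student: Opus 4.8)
The plan is to combine the evolution identity for $\oint_C(\varphi_{\xi^n})^2\,d\xi$ displayed just above with the five interpolation estimates that follow it, reproducing in general order the argument that gave \eqref{1d-E} in the case $n=0$. The starting point is the decomposition in which $\frac{d}{dt}\oint_C(\varphi_{\xi^n})^2\,d\xi$ equals the good dissipation term $-6\oint_C(\varphi_{\xi^{n+2}})^2\,d\xi$ plus the five error integrals $\oint_C P^{2n+3}_3(\varphi)\,d\xi$, $\oint_C P^{2n+2}_4(\varphi)\,d\xi$, $\oint_C P^{2n+2}_2(\varphi)\,d\xi$, $\oint_C P^{2n+1}_5(\varphi)\,d\xi$ and $\oint_C P^{2n+1}_3(\varphi)\,d\xi$. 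The dissipation term arises from integrating by parts twice against the leading term $-3\varphi_{\xi^{n+4}}$ of Lemma \ref{lem-varphikt}, and the same integrations by parts reduce every factor in the five error integrals to a derivative of order at most $n+1$. First I would record this, together with the fact that each $P^{\mu}_{\nu}$ appearing has $\nu\ge 2$ and exponent $\gamma=(\mu+\tfrac12\nu-1)/(n+2)<2$---a short check that holds uniformly in $n$ for all five types---so that Lemma \ref{lem-dks} applies with $l=n+2$.

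Next I would feed the five error integrals into the five displayed estimates, using a single parameter $\eta>0$ throughout. Summing, the contributions of the form $\eta\oint_C(\varphi_{\xi^{n+2}})^2\,d\xi$ add up to $5\eta\oint_C(\varphi_{\xi^{n+2}})^2\,d\xi$; choosing $\eta$ with $5\eta\le 6$, for instance $\eta=1$, lets these be absorbed into $-6\oint_C(\varphi_{\xi^{n+2}})^2\,d\xi$, leaving a non-positive multiple of $\oint_C(\varphi_{\xi^{n+2}})^2\,d\xi$ which I then discard. With $\eta$ fixed, all the negative powers of $\eta$ and the constants $c,c_2$ become definite constants, and what remains on the right-hand side is a finite sum of powers of $E=\oint_C\varphi^2\,d\xi$, namely constant multiples of $E^{2n+5}$, $E^{2n+3}$, $E^{(2n+9)/5}$ and $E$.

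The final step is an elementary consolidation of these powers. Each exponent occurring lies in $[1,2n+5]$: indeed $1\le(2n+9)/5\le 2n+5$ and $1\le 2n+3\le 2n+5$ for every $n\ge 0$. For $E\ge 0$ and $1\le p\le 2n+5$ one has $E^p\le E+E^{2n+5}$, the term $E$ dominating when $E\le 1$ and $E^{2n+5}$ dominating when $E\ge 1$. Applying this to each surviving term and collecting the finitely many constants into a single $D=D(n)$ yields \eqref{nd-E}.

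I do not anticipate a genuine analytic obstacle, since the substantive work is already carried out in Lemma \ref{lem-varphikt} and in the interpolation Lemma \ref{lem-dks}. The only steps that demand care are bookkeeping: confirming that the integrations by parts leave exactly the five listed $P$-types with derivative orders below $l=n+2$, verifying the uniform bound $\gamma<2$ so that Lemma \ref{lem-dks} is legitimately applicable for every $n$, and tracking the $\eta$-dependence of the constants so that the absorption into the dissipation term is valid. Once these are settled, the passage to \eqref{nd-E} is immediate.
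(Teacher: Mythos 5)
Your proposal is correct and takes essentially the same route as the paper: the paper likewise starts from the displayed evolution identity with dissipation term $-6\oint_C(\varphi_{\xi^{n+2}})^2d\xi$, invokes Lemma \ref{lem-dks} with $l=n+2$ to obtain exactly the five listed estimates, absorbs the $\eta$-contributions into the dissipation as in the $n=0$ case, and consolidates the remaining powers of $E$ into $D(E+E^{2n+5})$. Your explicit bookkeeping (the uniform-in-$n$ check that $\gamma<2$, the reduction of derivative orders below $n+2$ by integration by parts, and the elementary bound $E^p\leq E+E^{2n+5}$ for $1\leq p\leq 2n+5$) simply spells out what the paper leaves implicit in the phrase ``using the similar procedure as the previous part.''
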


    \begin{proof}[Proof of Theorem \ref{thm-toelps}]
    It is similar to the proof of Proposition A in \cite{cho}. For reader's convenience, we present the key arguments below.
    If $E(t)$ is uniformly bounded in $[0,T)$ for some $T$.
    By integrating \eqref{nd-E}, we obtain that it implies a uniform bound on the $L^2$-norm of the all derivatives of the curvature $\varphi$ with respect to the fully
    affine arc length $\xi$. Hence the local existence of Theorem \ref{thm-exst} can be employed to extend the flow beyond $T$. If we take $T$ to be $\omega$, it can be concluded that $E(t)$ must become unbounded as a finite $\omega$ is approached.

    When $\omega$ is finite and $t$ is close to $\omega$, which implies $E(t)>1$, by integrating \eqref{1d-E} from $t$ to $\omega$, we have
    \begin{equation*}
      E^4(t)\geq\frac{1}{8D}(\omega-t)^{-1}.
    \end{equation*}
     This gives the desired lower bound for the blow-up rate.

    By \eqref{heat-gt} we have
    \begin{equation*}
    \frac{dL(t)}{dt}=\frac{d}{dt}\oint_{C}\frac{g_t}{g}d\xi=\frac{1}{3}\oint_{C}\varphi_{\xi}^2d\xi>0,
    \end{equation*}
    and
    \begin{equation}\label{affine-length}
    \frac{1}{3}\int^t_0\oint_{C}\varphi_{\xi}^2d\xi dt=L(t)-L(0).
    \end{equation}
    When $\omega$ is infinity, according to Theorem \ref{thm-faii}, we have  $L(t)$ is uniformly bounded. Then it follows
    \begin{equation*}
    \int^{\infty}_0\oint_{C}\varphi_{\xi}^2(\xi,\tau)d\xi d\tau\leq L_0,
    \end{equation*}
    for some constant $L_0$.
    According to Wirtinger inequality, we have
    \begin{equation*}
    \int^{\infty}_0\oint_{C}\varphi^2(\xi,\tau)d\xi d\tau\leq L_1,
    \end{equation*}
    for some constant $L_1$.
    Hence for any $\eta>0$, there exists $j_0$ such that we can find, by the mean value theorem, $t_j\in[j,j+1]$ satisfying $E_{t_j}\leq\eta$
    for all $j\geq j_0$.
    From \eqref{1d-E} it is clear that we can find a sufficiently small $\eta$ such that $E(t)$ is less than $1$ for all $t$ in $[t_j,t_j+2]$.
    It means that $E(t)$ is uniformly bounded in $[j_0+1,\infty)$.
    It follows from \eqref{nd-E} and parabolic regularity that all spatial and time derivatives of $\varphi$ are uniformly bounded.
    In view of  \eqref{affine-length}, any sequence $\{\varphi(\cdot,t_j)\}$ contains a subsequence $\{\varphi(\cdot,t_{j_i})\}$ converging smoothly
    to a constant as $t_{j_i}\to\infty$. Since $\displaystyle \oint_{C}\varphi d\xi=0$, the constant must be zero. By Example \ref{exm-elps}, the ellipse is the static solution for heat flow \eqref{heat-evo}, and then the proof of Theorem \ref{thm-toelps} is completed.
    \end{proof}

\appendix
{\centering\section*{Appendix A}}

\setcounter{equation}{0}
\setcounter{subsection}{0}
\setcounter{thm}{0}
\renewcommand{\theequation}{A.\arabic{equation}}
\renewcommand{\thesubsection}{A.\arabic{subsection}}
\renewcommand{\thethm}{A.\thesection\arabic{thm}}

\subsection{Motions of plane curves in equi-affine setting}
    Let $C(p,t):I_1\times I_2\rightarrow\R^2$ be a family of curves where $p\in I_1\subset\R$ parameterizes each curve and $t\in I_2\subset\R$ parameterizes the family.
    According to Example \ref{exm-ea}, setting
    \begin{equation}\label{eag}
      \bar{g}(p,t)=\bm{[}C_p,C_{p^2}\bm{]}^{1/3},
    \end{equation}
    and the equi-affine arc length $\sigma$ is explicitly given by
    \begin{equation*}
      \sigma(p,t)=\int_0^p\bar{g}(p,t)dp.
    \end{equation*}
    If a plane curve $C(p)$ parametrized by equi-affine arc length $\sigma$, we have
    \begin{equation*}
      [C_{\sigma}, C_{\sigma^2}]=1.
    \end{equation*}
    By Section \ref{ea-ivs}, the equi-affine curvature is given by
    \begin{equation}\label{eak}
      \mu=\bm{[}C_{\sigma^2},C_{\sigma^3}\bm{]},
    \end{equation}
    which implies $C_{\sigma^3}=-\mu C_{\sigma}$.
    Assume that the curve $C(p,t)$ evolves according to the curve flow
    \begin{equation}\label{eamotion}
         \frac{\partial C}{\partial t}=\alpha C_{\sigma}+\beta C_{\sigma^2},
    \end{equation}
    where $\{\alpha,\beta\}$ depend only on local values of $\mu$ and its $\sigma$ derivatives.
    It is easy to check
    \begin{align*}
     C_{p}=\bar{g}C_{\sigma},\quad C_{p^2}=\bar{g}\bar{g}_{\sigma}C_{\sigma}+\bar{g}^2C_{\sigma^2},
    \end{align*}
    and
    \begin{align*}
       C_{pt}&=\;\bar{g}(\alpha_{\sigma}-\beta\mu)C_{\sigma}+\bar{g}(\alpha+\beta_{\sigma})C_{\sigma^2},\\
       C_{p^2t}&=\;\Big(\bar{g}^2(\alpha_{\sigma^2}-2\beta_{\sigma}\mu-\mu_{\sigma}\beta-\alpha\mu)+\bar{g}\bar{g}_{\sigma}(\alpha_{\sigma}-\mu\beta)\Big)C_{\sigma}\\
       &\qquad\qquad\qquad\quad+\Big(\bar{g}^2(2\alpha_{\sigma}+\beta_{\sigma^2}-\beta\mu)+\bar{g}\bar{g}_{\sigma}(\alpha+\beta_{\sigma})\Big)C_{\sigma^2}.
    \end{align*}
    In view of \eqref{eag}, we have
    \begin{equation*}
    \frac{\partial g^3}{\partial t}=\bm{[}C_{pt}, C_{p^2}\bm{]}+\bm{[}C_{p}, C_{p^2t}\bm{]},
    \end{equation*}
    which generates
    \begin{equation}\label{eqgtog}
     \frac{\bar{g}_t}{\bar{g}}=\alpha_{\sigma}-\frac{2}{3}\beta\mu+\frac{1}{3}\beta_{\sigma^2}.
    \end{equation}
    Proceed further, we have
    \begin{align*}
      C_{\sigma t}&=\;C_{t\sigma}-\frac{\bar{g}_t}{\bar{g}}C_{\sigma}\\
                  &=\;-\frac{1}{3}(\mu\beta+\beta_{\sigma^2})C_{\sigma}+(\alpha+\beta_{\sigma})C_{\sigma^2},\\
       C_{\sigma^2 t}&=\;C_{\sigma t\sigma}-\frac{\bar{g}_t}{\bar{g}}C_{\sigma^2}\\
                        &=\;-\frac{1}{3}\Big(4\mu\beta_{\sigma}+\mu_{\sigma}\beta+\alpha\mu\beta_{\sigma^3}\Big)C_{\sigma}+\frac{1}{3}(\beta_{\sigma^2}+\mu\beta)C_{\sigma^2},
    \end{align*}
    and
    \begin{align*}
      C_{\sigma^3 t}&=\;C_{\sigma^2 t\sigma}-\frac{\bar{g}_t}{\bar{g}}C_{\sigma^3}\\
      &=\;-\frac{1}{3}\Big(\beta\mu_{\sigma^2}+4\mu\beta_{\sigma^2}+3\beta\mu^2+5\beta_{\sigma}\mu_{\sigma}+3\alpha\mu_{\sigma}+\beta_{\sigma^4}\Big)C_{\sigma}\\
      &\qquad\qquad-\mu\Big(\alpha+\beta_{\sigma}\Big)C_{\sigma^2}.
    \end{align*}
    Hence,
    \begin{align}\label{eqkt}
    \mu_t&=\;\bm{[}C_{\sigma^2 t}, C_{\sigma^3}\bm{]}+\bm{[}C_{\sigma^2 }, C_{\sigma^3 t}\bm{]}\nonumber\\
         &=\;\frac{1}{3}\Big(\beta_{\sigma^4}+5\mu\beta_{\sigma^2}+5\beta_{\sigma}\mu_{\sigma}+4\beta\mu^2+\alpha\mu_{\sigma}+\mu_{\sigma^2}\beta\Big).
    \end{align}

\subsection{Local existence of a fourth-order parabolic equation}
    In \cite{gi}, the equation
    \begin{equation}\label{p^4de}
    \left\{
    \begin{aligned}
      &u_t+a(x,u,u_x)u_{xxxx}+b(x,u,u_x,u_{xx})u_{xxx}+c(x,u,u_x,u_{xx})=0,\\
      &u(x,0)=u_0(x),
    \end{aligned}
    \right.
    \end{equation}
    for $t>0$ and $x\in T=\R/(\omega \mathbb{Z})$ with $\omega>0$ was considered. For \eqref{p^4de}, assume:
    \begin{itemize}
              \item[(a)] The function $a(x,\alpha_0,\alpha_1)$ is positive;
              \item[(b)] Let $M>0$ be given. The functions $a(x,\alpha_0,\alpha_1)$, $b(x,\alpha_0,\alpha_1,\alpha_2)$ and $c(x,\alpha_0,\alpha_1,\alpha_2)$
              are smooth in their all arguments but restricted for $|\alpha_0|\leq2\mu M$ and $\omega$-periodic in $x$, where $\mu=\mu(T)>0$ denote a number in Sobolev
              inequality
              \begin{equation*}
                \|f\|_{L^{\infty}(T)}\leq\mu\|f\|_{H^1(T)} \quad \mathrm{for}\quad f\in H^1(T).
              \end{equation*}
    \end{itemize}
    \begin{thm}[Local existence for \eqref{p^4de}, \cite{gi}] \label{lu-4order}
    Let $M>0$. Assume (a) and (b). Then for any $u_0\in H^4(T)$ with $\|u_0\|_{H^4(T)}\leq M$, there is a $T_0(M)>0$ such that there exists a unique solution $u(x,t)$ of \eqref{p^4de} satisfying
    \begin{small}
    \begin{equation*}
    u\in  L^2(0,T_0(M);H^6(T)),\quad u_t\in L^2(0,T_0(M); H^2(T)), \; \|u\|_{H^4(T)}(t)\leq2M,
    \end{equation*}
    \end{small}%
    for $t\in[0, T_0(M)]$.
    \end{thm}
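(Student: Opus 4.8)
The plan is to construct the solution by a Banach fixed-point (contraction-mapping) argument built on the linear theory of fourth-order parabolic equations, exploiting the parabolicity granted by assumption (a). First I would fix $M>0$ and work in the complete metric space
\begin{equation*}
\mathcal{X}_{T_0}=\Big\{v\in C([0,T_0];H^4(T)):\ v(\cdot,0)=u_0,\ \sup_{0\le t\le T_0}\|v(\cdot,t)\|_{H^4(T)}\le 2M\Big\},
\end{equation*}
equipped with the weaker metric $d(v_1,v_2)=\sup_{0\le t\le T_0}\|v_1(\cdot,t)-v_2(\cdot,t)\|_{L^2(T)}$. Given $v\in\mathcal{X}_{T_0}$, define $u=\Phi(v)$ as the solution of the \emph{linear} problem obtained by freezing the arguments of the coefficients,
\begin{equation*}
u_t+a(x,v,v_x)\,u_{xxxx}+b(x,v,v_x,v_{xx})\,u_{xxx}=-c(x,v,v_x,v_{xx}),\qquad u(\cdot,0)=u_0.
\end{equation*}
The Sobolev embedding $\|f\|_{L^\infty(T)}\le\mu\|f\|_{H^1(T)}$ ensures $|v|\le 2\mu M$, so the frozen coefficients lie in the range where (b) grants their smoothness.

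Second I would solve this linear problem and derive the fundamental energy estimates. Since $a>0$ is bounded away from zero on the admissible range, the operator is uniformly parabolic, and a Galerkin approximation in the Fourier basis on $T$ produces a unique weak solution. The key estimates come from multiplying the equation by $(-1)^k\partial_x^{2k}u$ for $k=0,1,\dots,4$ and integrating over $T$: integration by parts on the periodic domain turns the principal term into the coercive quantity $\int_T a\,(\partial_x^{k+2}u)^2\,dx$ (plus commutators involving $\partial_x a$ that are of strictly lower order), while the third-order term $b\,u_{xxx}$ and the inhomogeneity $c$ are absorbed by Young's inequality at the cost of constants depending on $M$ through the frozen coefficients. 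Summing over $k$ yields a Gronwall-type inequality for $\|u(\cdot,t)\|_{H^4(T)}^2$, and integrating the coercive terms in time gives $u\in L^2(0,T_0;H^6(T))$; then $u_t\in L^2(0,T_0;H^2(T))$ follows by reading $u_t$ off the equation.

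Third, these estimates show $\Phi$ maps $\mathcal{X}_{T_0}$ into itself for small $T_0$. The Gronwall inequality gives $\|u(\cdot,t)\|_{H^4(T)}^2\le\big(M^2+C(M)t\big)e^{C(M)t}$, which stays below $(2M)^2$ once $T_0=T_0(M)$ is chosen small enough. To obtain a contraction I would estimate the difference $w=\Phi(v_1)-\Phi(v_2)$ in the weak $L^2$ metric: $w$ solves the linear equation with zero initial data and with forcing given by the differences of the coefficients evaluated along $v_1$ and $v_2$; the smoothness in (b) provides Lipschitz control of these differences by $d(v_1,v_2)$, and an $L^2$ energy estimate yields $d(\Phi(v_1),\Phi(v_2))\le C(M)\,T_0^{1/2}\,d(v_1,v_2)$. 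Shrinking $T_0$ makes this constant less than one, so $\Phi$ is a contraction on the complete space $(\mathcal{X}_{T_0},d)$; its unique fixed point is the desired solution, and the same difference estimate yields uniqueness in the stated class.

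The main obstacle is closing the higher-order energy estimates at the $H^4$ (hence $H^6$) level: the third-order term $b\,u_{xxx}$ sits only one order below the parabolic order, so after differentiation its contribution must be split carefully and absorbed into the coercive $\int_T a\,(\partial_x^{k+2}u)^2\,dx$ using the positivity and the lower bound on $a$, with the residual commutator and coefficient-derivative terms controlled by the frozen $H^4$-bound on $v$ together with the Sobolev inequality. Once the principal term is made coercive and the borderline third-order term is absorbed, the remaining terms are genuinely lower order and the estimate closes by Gronwall, after which the regularity $u_t\in L^2(0,T_0;H^2(T))$ is immediate from the equation. This is precisely the structure carried out in \cite{gi}.
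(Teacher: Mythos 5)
You should first note that the paper contains no proof of this statement at all: Theorem \ref{lu-4order} is quoted verbatim from Giga--Ito \cite{gi} as a black box for the proof of Theorem \ref{thm-exst}, so your proposal can only be judged against the standard argument it attempts to reconstruct. Your overall scheme (freeze coefficients, solve the linear uniformly parabolic problem by Galerkin, close $H^4$/$H^6$ energy estimates, contract in a weak norm while holding the $H^4$-ball) is the right one, but the contraction step as written has a genuine gap. You contract in $d(v_1,v_2)=\sup_{t}\|v_1-v_2\|_{L^2(T)}$ and assert that assumption (b) gives \emph{Lipschitz} control of the coefficient differences by $d(v_1,v_2)$. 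It does not: $b$ and $c$ depend on $v_{xx}$ (and $a$ on $v_x$), and on the $2M$-ball of $H^4$ the best interpolation gives is
\begin{equation*}
\|\partial_x^2(v_1-v_2)\|_{L^2(T)}\le C\,\|v_1-v_2\|_{L^2(T)}^{1/2}\,\|v_1-v_2\|_{H^4(T)}^{1/2}\le C(M)\,d(v_1,v_2)^{1/2},
\end{equation*}
i.e.\ only H\"older continuity of exponent $1/2$ (exponent $3/4$ for the $v_x$-dependence of $a$). An estimate of the form $d(\Phi(v_1),\Phi(v_2))\le C(M)T_0^{1/2}d(v_1,v_2)^{1/2}$ is not a contraction, and Picard iterates under such a bound need not be Cauchy (the iteration $d_{n+1}\le\epsilon d_n^{1/2}$ stabilizes near $\epsilon^2$, not at $0$), so the fixed-point argument does not close in the $L^2$ metric.

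The standard repair keeps your structure but upgrades the weak metric to $d(v_1,v_2)=\sup_t\|v_1-v_2\|_{H^2(T)}$: the $2M$-ball of $H^4$ remains closed under $H^2$-convergence by weak-$*$ lower semicontinuity, and now the Sobolev inequality of assumption (b) gives $\|a_1-a_2\|_{L^\infty}\le C(M)\|v_1-v_2\|_{H^2}$ and $\|b_1-b_2\|_{L^2}+\|c_1-c_2\|_{L^2}\le C(M)\|v_1-v_2\|_{H^2}$, so that in the $H^2$-level energy identity for $w=\Phi(v_1)-\Phi(v_2)$ (pair the equation with $w_{xxxx}$) every forcing term is measured Lipschitzly, e.g.\ $\|(a_1-a_2)\,\partial_x^4\Phi(v_2)\|_{L^2}\le\|a_1-a_2\|_{L^\infty}\|\Phi(v_2)\|_{H^4}$, and Gronwall yields $d(\Phi(v_1),\Phi(v_2))\le C(M)T_0^{1/2}d(v_1,v_2)$. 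Two smaller cautions: at the top level $k=4$ of your a priori estimates, $\partial_x^4$ of the frozen coefficients does not exist for $v\in H^4$, so you must integrate by parts so that at most three derivatives fall on $a$ and two on $b,c$ (these involve only $\partial_x^4 v\in L^2$, to be absorbed against $\int_T a\,(\partial_x^6u)^2\,dx$ by Young's inequality) --- your generic remark about commutators glosses precisely the borderline case; and membership of the fixed point in $C([0,T_0];H^4)$ should be recovered a posteriori from $u\in L^2(0,T_0;H^6)$ together with $u_t\in L^2(0,T_0;H^2)$, since limits in your metric are a priori only weakly continuous in $H^4$.
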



\end{document}